\newtheoremstyle{myplain}
  {}
  {}
  {\itshape}
  {}
  {\bfseries}
  {.}
  { }
  {\thmnumber{#2}\thmname{ #1}\thmnote{ (#3)}}
\newtheoremstyle{mydefinition}
  {}
  {}
  {}
  {}
  {\bfseries}
  {.}
  { }
  {\thmnumber{#2}\thmname{ #1}\thmnote{ (#3)}}
\theoremstyle{myplain}
\newtheorem{theorem}{Theorem}[section]
\newtheorem{lemma}[theorem]{Lemma}
\newtheorem{proposition}[theorem]{Proposition}
\theoremstyle{mydefinition}
\newtheorem{definition}[theorem]{Definition}
\newtheorem{example}[theorem]{Example}
\newtheorem{examples}[theorem]{Examples}
\newtheorem{remark}[theorem]{Remark}
\newtheorem{problem}[theorem]{{\sc Problem}}
\newtheorem{notation}[theorem]{Notation}
\numberwithin{equation}{theorem} 
\newcommand{\New}[1]{\emph{#1}}
\DeclareMathOperator{\Pol}{Pol}
\DeclareMathOperator{\Inv}{Inv}
\DeclareMathOperator{\Op}{Op}
\newcommand{\Opa}[1][]{\Op^{(#1)}}
\DeclareMathOperator{\Rel}{Rel}
\newcommand{\Rela}[1][]{\Rel^{(#1)}}
\DeclareMathOperator{\SOp}{\prescript{S}{}Op}
\DeclareMathOperator{\SpOp}{\prescript{S'}{}Op}
\DeclareMathOperator{\ShOp}{\prescript{\widehat{S}}{}Op}
\newcommand{\SOpa}[1][]{\SOp^{(#1)}}
\DeclareMathOperator{\SRel}{\prescript{S}{}Rel}
\newcommand{\SRela}[1][]{\SRel^{(#1)}}
\newcommand{\SJ}{\prescript{S}{}\!\!J} 
\newcommand{\SD}{\prescript{S}{}\!D} 
\newcommand{\SSg}[2][]{\ensuremath{\prescript{S}{}\!\langle #2 \rangle_{#1}}}
\newcommand{\SRg}[2][]{\ensuremath{\prescript{S}{}\![#2]_{#1}}}
\newcommand{\SL}{\prescript{S}{}\!\cL}
\DeclareMathOperator{\SPol}{\prescript{S}{}Pol}
\DeclareMathOperator{\SpPol}{\prescript{S^{\prime}}{}Pol}
\DeclareMathOperator{\ShPol}{\prescript{\widehat{S}}{}Pol}
\DeclareMathOperator{\SInv}{\prescript{S}{}Inv}
\newcommand{\SInva}[1][]{\SInv^{(#1)}} 
\newcommand{\Spreserves}{\mathrel{\stackrel{S}{\triangleright}}}
\DeclareMathOperator{\sgn}{sgn} 
\DeclareMathOperator{\Sgn}{Sgn}
\DeclareMathOperator{\ar}{ar} 
\DeclareMathOperator{\pr}{pr}
\newcommand{\preserves}{\mathrel{\triangleright}}
\newcommand{\notpreserves}{\mathrel{\not\triangleright}}
\newcommand{\id}{{\sf id}}
\newcommand{\Int}[2][\mathsf{\Quord(A)}]{[#2]_{#1}}
\newcommand{\cL}{\mathcal{L}}
\newcommand{\cM}{\mathcal{M}}
\newcommand{\cT}{\mathcal{T}}
\newcommand{\lambdahat}{\widehat{\lambda}}
\newcommand{\kappahat}{\widehat{\kappa}}
\newcommand{\rhobar}{\overline{\rho}}
\newcommand{\blambda}{\boldsymbol{\lambda}}
\newcommand{\bfr}{\mathbf{r}}
\newcommand{\N}{\mathbb{N}} 
\newcommand{\Sg}[2][]{\ensuremath{\langle #2 \rangle_{#1}}}
\newcommand{\Rg}[2][]{\ensuremath{[#2]_{#1}}}
\DeclareMathOperator{\Fun}{Fun}
\DeclareMathOperator{\RelP}{RelP}
\DeclareMathOperator{\PolP}{PolP}
\DeclareMathOperator{\InvP}{InvP}
\let\rho=\varrho
\let\epsilon=\varepsilon
\let\phi=\varphi
\let\kappa=\varkappa
\author{Peter Jipsen\\ Faculty of Mathematics
  \\ Chapman University, Orange, CA (USA)
\and Erkko Lehtonen\\
Department of Mathematics \\
Khalifa University \\
Abu Dhabi (United Arab Emirates)
\and Reinhard P\"oschel%
 \\Institute of Algebra\\Faculty of Mathematics\\TU Dresden (Germany)}
\date{\small version July 31, 2023}
\title{$S$-preclones and the Galois connection $\SPol$--$\SInv$, Part I}
\begin{document}

\maketitle

\begin{abstract}
  We consider \emph{$S$\hyp{}operations} $f \colon A^{n} \to A$ in
  which each argument is assigned a \emph{signum} $s \in S$
  representing a ``property'' such as being order\hyp{}preserving or
  order\hyp{}reversing with respect to a fixed partial order on
  $A$. The set $S$ of such properties is assumed to have a monoid
  structure reflecting the behaviour of these properties under the
  composition of $S$\hyp{}operations (e.g., order\hyp{}reversing
  composed with order\hyp{}reversing is order\hyp{}preserving). The
  collection of all $S$\hyp{}operations with prescribed properties for
  their signed arguments is not a clone (since it is not closed under
  arbitrary identification of arguments), but it is a preclone with
  special properties, which leads to the notion of
  \emph{$S$\hyp{}preclone}. We introduce \emph{$S$\hyp{}relations}
  $\varrho = (\varrho_{s})_{s \in S}$, \emph{$S$\hyp{}relational
    clones}, and a preservation property ($f \Spreserves \varrho$),
  and we consider the induced Galois connection $\SPol$--$\SInv$. The
  $S$\hyp{}preclones and $S$\hyp{}relational clones turn out to be
  exactly the closed sets of this Galois connection. We also establish
  some basic facts about the structure of the lattice of all
  $S$\hyp{}preclones on $A$.
\end{abstract}


\section{Introduction}

Clones of operations have been studied since the seminal publication
by Emil Post \cite{Pos21} (announced in \cite{Pos20}, in full detail
in \cite{Pos41}) and are fundamental algebraic objects in universal
algebra. They have been generalized to Lawvere theories and preclones
(i.e., operads), and investigations into the lattice of clones on a
finite set have been greatly illuminated by the Pol--Inv Galois
connection between clones and relational clones. The collection of all
order\hyp{}preserving operations on a poset form a clone, and theories
of ordered algebras are classified by subclones of this type. In
algebraic logic formulas (i.e., elements of the algebras) are ordered
by a consequence relation, and logical operations such as negation and
implication are not order\hyp{}preserving in all arguments. They are
however \emph{order\hyp{}preserving or order\hyp{}reversing} in each
argument. The collection of all such operations on a nontrivial poset
(our ``motivating example'' presented in more detail as
Example~\ref{Ex0}) is not a clone since it is not closed under
arbitrary identification of arguments, but it is a preclone.

In this paper we study a generalization of such preclones, so-called
\New{$S$\hyp{}preclones}. On one hand they provide a classification of
the generating sets of operations for partially ordered algebras. But
on the other hand these new algebraic structures of $S$\hyp{}preclones
are applicable to situations where the arguments of an operation
should have different ``properties'' $s\in S$ ``expressible'' by
relations (like order\hyp{}preserving and order\hyp{}reversing via
partial order relations). The set $S$ of ``properties'' (called
\New{signa}) itself has an algebraic structure which reflects
composition of operations, i.e., we need $S$ to be a monoid
(e.g. order\hyp{}reversing composed with order\hyp{}reversing gives
order\hyp{}preserving, thus $S$ is a two\hyp{}element group for our
motivating example, see Example~\ref{Ex0}). A universal algebraic
perspective on partially ordered algebras can be found in
\cite{Pig2004}.

Our results in the current paper (Part I) are about preclones of
$S$\hyp{}operations on a \emph{finite} base set $A$. In a subsequent
paper (Part II) we consider in detail the case of \emph{Boolean
  $S$\hyp{}preclones}, where $A$ is a 2\hyp{}element set.

In Section~\ref{secC} we introduce the basic notions of an
$S$\hyp{}operation, $S$\hyp{}preclone, $S$\hyp{}relation and
$S$\hyp{}relational clone which generalize the classical notions in a
natural way. In Section~\ref{secD} the Galois connection
$\SPol$--$\SInv$ induced by the crucial property
``\New{$S$\hyp{}operation $S$\hyp{}preserves $S$\hyp{}relation}'',
$f\Spreserves\rho$, is defined. It is shown that the Galois closed
sets of $S$\hyp{}operations form an $S$\hyp{}preclone and likewise the
Galois closed sets of $S$\hyp{}relations form an $S$\hyp{}relational
clone.

Section~\ref{secE} contains the central result that for a finite set
$A$ and a set $F$ of $S$\hyp{}operations on $A$, the set
$\SPol\SInv F$ is the smallest $S$\hyp{}preclone containing $F$, hence
all $S$\hyp{}preclones are Galois closed (Theorem~\ref{Thm-I}). On the
relational side the corresponding result holds (Theorem~\ref{Thm-II}):
for a set $Q$ of $S$\hyp{}relations, $\SInv\SPol Q$ is the smallest
$S$\hyp{}relational clone containing $Q$, i.e., the Galois closures
are exactly the $S$\hyp{}relational clones.

Section~\ref{secH} contains results about the lattice of all
$S$\hyp{}preclones on a fixed (finite) set $A$. It is shown that this
lattice is atomic and coatomic, with finitely many atoms and
coatoms. Several symmetries of the lattice, including some that are
determined by automorphisms of the monoid $S$, are established, and it
is proved that the clone lattice on $A$ is embeddable in the lattice
of $S$\hyp{}preclones in several ways. We conclude in
Section~\ref{secI} with some open problems and a brief preview of the
results that are contained in Part II.


\section{$\boldsymbol{S}$-preclones and $\boldsymbol{S}$-relational
  clones for a monoid~$\boldsymbol{S}$}\label{secC}

\begin{definition}[Operations]
  Recall that an \New{operation} on a set $A$ is a mapping
  $f \colon A^n \to A$ for some $n \in \N_{+}$. The number $n$
  (notation $\ar(f)$) is called the \New{arity} of $f$. Denote by
  $\Opa[n](A)$ the set of all $n$-ary operations on $A$, and let
  $\Op(A) := \bigcup_{n \in \N_{+}} \Opa[n](A)$.
  
  The $i$-th $n$-ary \New{projection} is the operation
  $p_i^{(n)} \in \Opa[n](A)$ given by the rule
  $p_i^{(n)}(a_1, \dots, a_n) := a_i$ for all
  $(a_1, \dots, a_n) \in A^n$.
\end{definition}

\begin{definition}[$\boldsymbol{S}$\hyp{}operations]\label{C1}
  Assume that $S$ is a fixed monoid with unit element $e$. Denote by
  $\SOpa[n](A)$ the set of all $n$-ary operations $f \colon A^n \to A$
  where each argument has a label $s \in S$. Such operations are
  called \New{$S$\hyp{}signed operations}, or
  \New{$S$\hyp{}operations} for short. Let
  $\SOp(A) := \bigcup_{n \in \N_{+}} \SOpa[n](A)$.

  The labels are given by the \New{signum} $\sgn(f)$ which is an
  $n$-tuple $\lambda = (s_{1}, \dots, s_{n}) \in S^{n}$ assigning
  $s_{i} \in S$ to the $i$-th argument of $f$
  ($i \in \{1, \dots, n\}$). We also say, that $s_{i}$ is the signum
  of the $i$-th argument of $f$. For
  $\sgn(f) = (s_{1}, \dots, s_{n})$, let
  $\Sgn(f) := \{s_{1}, \dots, s_{n}\}$ be the set of components of
  $\sgn(f)$.

  We write $f^{\lambda}$ if we want to indicate that $f \in \SOp(A)$
  has signum $\lambda$; furthermore, for $f \in \Opa[n](A)$ and
  $\lambda = (s_{1}, \dots, s_{n})$ we denote by $f^{\lambda}$ the
  same function in $\SOp(A)$ equipped with signum $\lambda$. For unary
  functions $f \in \SOpa[1](A)$ with $\sgn(f) = (s)$ we also write
  $f^{s}$ (instead of $f^{(s)}$). Sometimes it is useful to have an
  explicit notation for the \New{underlying function} of some
  $f = f^{\lambda}\in\SOp(A)$ that is obtained by just ignoring the
  signum $\lambda=\sgn(f)$. Then we shall write $\mathring f$ (or
  $f\,\mathring{}\,$); thus, $f = (\mathring f)^{\lambda}$ for
  $f \in \SOp(A)$ and $(f^{\lambda})\,\mathring{} = f$ for
  $f \in \Op(A)$.
\end{definition}

\begin{definition}[$\boldsymbol{S}$\hyp{}preclones]\label{C2}

  A set $F \subseteq \SOp(A)$ is called an \New{$S$\hyp{}preclone} if
  it contains $\id_{A}$ and is closed under the operations $\zeta$,
  $\tau$, $\nabla^s$, $\Delta$, $\circ$ that are defined as
  follows. Let $f, g \in \SOp(A)$ with
  $\sgn(f) = (s_1, \dots, s_n) \in S^n$ and
  $\sgn(g) = (s'_1, \dots, s'_m)$, and let $s \in S$. Then

  \begin{enumerate}[label=\textup{(\arabic*)}]

  \item \label{C2-1} $\sgn(\id_A) = (e)$ ($e$ is the neutral element
    of $S$) and
    \[\id_A(x):=x\quad \text{(\New{identity operation});}\]

  \item \label{C2-2} if $n \geq 2$ then
    $\sgn(\zeta f) = (s_n, s_1, \dots, s_{n-1})$ and
    \[
      (\zeta f)(x_{1}, x_{2}, \dots, x_{n}) := f(x_{2}, \dots, x_{n},
      x_{1}) \quad \text{(\New{cyclic shift});}
    \]
    if $n = 1$ then $\zeta f := f$,

  \item \label{C2-3} if $n \geq 2$ then
    $\sgn(\tau f) = (s_2, s_1, s_3, \dots, s_n)$ and
    \begin{align*}
      & (\tau f)(x_{1}, x_{2}, x_{3}, \dots, x_{n}) 
        := f(x_{2}, x_{1}, x_{3}, \dots, x_{n}) \\
      & \text{(\New{permuting the first two arguments});}
    \end{align*}
    if $n = 1$ then $\tau f := f$,

  \item \label{C2-4} for $s \in S$,
    $\sgn(\nabla^s f) = (s, s_1, \dots, s_n)$ and
    \begin{align*}
      & (\nabla^s f)(x_{1}, x_{2}, \dots, x_{n+1}) := f(x_{2}, \dots, x_{n+1}) \\
      & \text{(\New{adding a fictitious argument with signum $s$} at the first place),} 
    \end{align*} 

  \item \label{C2-5} if $n \geq 2$ and $s_{1} = s_{2} = s$, then
    $\sgn(\Delta f) = (s, s_3, \dots, s_n)$ and
    \[
      (\Delta f)(x_{1}, x_{2}, \dots, x_{n-1}) := f(x_{1}, x_{1},
      x_{2}, \dots, x_{n-1}),
    \]
    (\New{identification of the first two arguments}, if these have
    the same signum $s$); otherwise $\Delta f := f$,

  \item \label{C2-6}
    $\sgn(f \circ g)=(s'_1 s_1, \dots, s'_m s_1, s_2, \dots, s_n)$ and
    \begin{align*}
      (f \circ g)&(x_{1},\dots, x_{m},x_{m+1}, \dots, x_{m+n-1}) \\
                 &:= f(g(x_{1}, \dots, x_{m}), x_{m+1}, \dots, x_{m+n-1})
                   \qquad \text{(\New{composition}).}
    \end{align*}
  \end{enumerate}
  For $F \subseteq \SOp(A)$ the $S$\hyp{}preclone \New{generated by}
  $F$ (i.e., the least $S$\hyp{}preclone containing $F$) is denoted by
  $\SSg{F}$ or $\SSg[A]{F}$.
\end{definition}

\begin{remark}\label{C2a}

  (A) With iterated applications of the basic operations of
  Definition~\ref{C2}, we can obtain further operations on
  $S$\hyp{}operations, such as arbitrary permutations of arguments and
  respective signa, adding a fictitious argument of signum $s$ at an
  arbitrary position, identification of arguments at positions $i$ and
  $j$ if they have the same signum, or composing $f$ with $g$ in the
  $i$-th position. We can also get arbitrary compositions of the form
  \begin{multline*}
    f(g_1(x_1, \dots, x_{m_1}), g_2(x_{m_1+1}, \dots, x_{m_1+m_2}),
    \dots, \\
    g_n(x_{m_1 + \dots + m_{n-1} + 1}, \dots, x_{m_1 + \dots + m_n})),
  \end{multline*}
  where $f$ is $n$-ary and $g_i$ is $m_i$-ary for
  $i \in \{1, \dots, n\}$, and the signum of the composition is
  determined analogously to Definition~\ref{C2}\ref{C2-6}.

  (B) By adding fictitious arguments and permuting arguments, we
  obtain from $\id_{A}$ every $S$\hyp{}operation of the form
  $(p_i^{(n)})^\lambda$, where $\lambda = (s_1, \dots, s_n) \in S^n$
  with $s_i = e$. We call such $S$\hyp{}operations \New{trivial
    $S$\hyp{}operations} or \New{\textup{(}trivial\textup{)}
    projections}. All other $S$\hyp{}operations are \New{nontrivial}
  (in particular, there also exist nontrivial projections where the
  essential argument has signum $s_{i}\neq e$, e.g., the operations
  $\id^{s}:=\id_{A}^{s}:=\id_{A}^{\lambda}$ with $\lambda=(s)$,
  $s\in S\setminus\{e\}$). The trivial $S$\hyp{}operations form an
  $S$\hyp{}preclone, denoted by $\SJ_{A}$, which is the least
  $S$\hyp{}preclone and is contained in every $S$\hyp{}preclone.

  (C) In the special case when $S$ is the trivial monoid $\{e\}$, the
  labels of arguments play no essential role, and hence this case
  clearly corresponds to usual unsigned operations. In this case, the
  notion of $S$\hyp{}preclone essentially agrees with the notion of
  \New{clone}. For $F \subseteq \Op(A)$, we denote by $\Sg{F}$ or
  $\Sg[A]{F}$ the clone generated by $F$, i.e., the smallest clone
  containing $F$.

  (D) $S$\hyp{}preclones are special preclones, also known as operads,
  which can be thought of as ``clones where identification of
  arguments is not allowed'' (more precisely, they contain $\id_{A}$
  and are closed under
 $\zeta$, $\tau$, and $\circ$
 (see
  Definition~\ref{C2}\ref{C2-2}, \ref{C2-3} and \ref{C2-6}), ignoring
  all signa). The term \emph{preclone} was introduced by \'Esik and
  Weil \cite{EsiW2005} in a study of the syntactic properties of
  recognizable sets of trees. A general characterization of preclones
  as Galois closures via so-called matrix collections can be found in
  \cite{Leh2010}. The notion of \emph{operad} originates from the work
  in algebraic topology by May \cite{May1972} and Boardman and Vogt
  \cite{BoaV1973}. For general background and basic properties of
  operads, we refer the reader to the survey article by Markl
  \cite{Mar2008}.
\end{remark}

\begin{example}[``motivating example'']\label{Ex0} 
  Let $(A,\leq)$ be a poset. We consider operations $f\in\Opa[n](A)$
  ($n\in\N_{+}$) such that $f$ in each argument is either
  order\hyp{}preserving (then the argument gets signum $+$) or
  order\hyp{}reversing (signum $-$), respectively, i.e., for constants
  $c_{j}$ ($j\in\{1,\dots,n\}\setminus\{i\}$) and $x_{i},y_{i}\in A$
  ($i\in\{1,\dots,n\}$) we have
  $f(c_{1},\dots,x_{i},\dots,c_{n})\leq
  f(c_{1},\dots,y_{i},\dots,c_{n})$ whenever $x_{i}\leq y_{i}$ or
  $x_{i}\geq y_{i}$, respectively. All these operations can be seen as
  $S$\hyp{}operations and form an $S$\hyp{}preclone $F$ where
  $S := \{\mathord{+}, \mathord{-}\}$ is understood as a
  (multiplicative) group (isomorphic to the $2$\hyp{}element group
  $\{+1,-1\}$) with unit element $+$. E.g., if
  $\sgn(f)=(\mathord{-},\mathord{+})$ and
  $\sgn(g)=(\mathord{+},\mathord{-})$, then
  $(f \circ g)(x_{1},x_{2},x_{3})=f(g(x_{1},x_{2}),x_{3})$ has signum
  $(\mathord{+} \cdot \mathord{-}, \mathord{-} \cdot \mathord{-},
  \mathord{+}) = (\mathord{-}, \mathord{+}, \mathord{+})$ according to
  \ref{C2}\ref{C2-6}, which coincides with the intuition for
  composition.
\end{example}

We give two further examples in a more formalized form.
 
\begin{examples}\label{Ex1}
  Let $S':=\{\mathord{+}, o\}$ be the $2$\hyp{}element monoid with
  zero $o$ and let $\widehat{S} = \{\mathord{+}, \mathord{-}, o\}$ be
  the monoid obtained from the group $S$ from Example~\ref{Ex0} by
  adding a zero, i.e., we have the multiplication tables
  \[
    \begin{array}[c]{c|cc|}
      S' & + & o \\ \hline
      +  & + & o \\
      o  & o & o \\ \hline
    \end{array}\quad\text{ and }\quad
    \begin{array}[c]{c|ccc|}
      \widehat{S} & + & - & o \\ \hline
      +  & + & - & o \\
      -  & - & + & o \\
      o  & o & o & o \\ \hline
    \end{array}\,.
  \]

  Let $\leq$ be a partial order on $A$. Let $F'\subseteq\SpOp(A)$ and
  $\widehat{F}\subseteq\ShOp(A)$ be the set of all $S'$- or
  $\widehat{S}$-operations, respectively, such that each argument with
  signum $s\in\{\mathord{+}, \mathord{-}, o\}$ has the property as
  given in the following table:
  \begin{center}
    \begin{tabular}[t]{|cl|}
      \hline
      $s$& \quad property $P$\\\hline
      $+$& order-preserving\\
      $-$& order-reversing\\
      $o$& constant on each connected component\\\hline
    \end{tabular}
  \end{center}
  (the property for $o$ is equivalent to order\hyp{}preserving
  \textbf{and} order\hyp{}reversing). Then $F'$ as well as
  $\widehat{F}$ are $S$\hyp{}preclones for $S=S'$ and $S=\widehat{S}$,
  respectively.

  Here, for a property $P$ for unary functions $g\in A^{A}$, we define
  that an $n$-ary \New{operation $f(x_{1},\dots,x_{n})$ has property
    $P$ in an argument}, say in $x_{i}$ ($i\in\{1,\dots,n\}$), if each
  translation $x_{i}\mapsto f(c_{1},\dots,x_{i},\dots,c_{n})$ ($x_{i}$
  on the $i$-th place) has this property $P$ (with
  $c_{1},\dots,c_{i-1},c_{i+1},\dots,c_{n}\in A$).

  Note that also Example~\ref{Ex0} fits into this scheme: the
  arguments of the $S$\hyp{}operations in the $S$\hyp{}preclone $F$
  (cf.\ Example~\ref{Ex0}) have the property
  \textit{order\hyp{}preserving} if they have signum $+$, otherwise
  (signum $-$) they have the property \textit{order\hyp{}reversing}.
\end{examples}

\begin{definition}[Relations]\label{C3a}
  Recall that subsets of $A^m$ are called $m$-ary \New{relations} on
  $A$. Since $\emptyset\subseteq A^{m}$, the empty set can be
  considered as $m$-ary for arbitrary $m$. Sometimes it is convenient
  to write formally $\emptyset^{(m)}$ if $\emptyset$ is considered as
  an $m$-ary relation. Denote by $\Rela[m](A)$ the set of all $m$-ary
  relations on $A$, and let
  $\Rel(A) := \bigcup_{m \in \N_{+}} \Rela[m](A)$.

  It is often useful to think of an $m$-ary relation $\rho$ as an
  $m \times \lvert \rho \rvert$ matrix whose columns are the tuples
  belonging to $\rho$. Keeping this point of view in mind, we will
  often regard a tuple belonging to a relation as a \New{column}, and
  we will refer to its components as \New{rows}. We are shortly going
  to consider certain operations on relations, and it will be helpful
  to (informally) describe them in terms of simple manipulations of
  rows of matrices.
\end{definition}

We briefly recall the ``elementary operations'' $\zeta$, $\tau$,
$\pr$, $\times$ and $\wedge$ on relations (see, e.g.,
D. Lau~\cite[Section~II.2.3]{Lau06}).
 Let $\sigma$ and $\sigma'$ be
$m$-ary and $m'$-ary relations on a set $A$, respectively. Then
$\zeta \sigma := \sigma$, $\tau \sigma := \sigma$ and
$\pr \sigma := \sigma$ for $m = 1$, and
\begin{align*}
  \zeta \sigma &:= \{ \, (a_2, a_3, \dots, a_m, a_1) \mid (a_1, a_2, \dots, a_m) \in \sigma \, \} && (m \geq 2), \\
  \tau \sigma &:= \{ \, (a_2, a_1, a_3, \dots, a_m) \mid (a_1, a_2, \dots, a_m) \in \sigma \, \}  && (m \geq 2), \\
  \pr \sigma &:= \{ \, (a_2, \dots, a_m) \mid (a_1, a_2, \dots, a_m) \in \sigma \, \} && (m \geq 2), \\
  \sigma \times \sigma' &:= \mathrlap{\{ \, (a_1, \dots, a_m, b_1, \dots, b_{m'}) \mid 
                          \begin{array}[t]{@{}l@{}}
                            (a_1, \dots, a_m) \in \sigma, \\
                            (b_1, \dots, b_{m'}) \in \sigma' \, \},
                          \end{array}} \\
  \sigma \wedge \sigma' &:= \sigma\cap\sigma' 
                          \quad\text{(if $m\ne m'$ we put $\sigma\wedge\sigma' := \emptyset^{(m)}$).}
\end{align*}
The operation $\zeta$ is called \New{cyclic shift of rows}, $\tau$ is
called \New{transposition of the first two rows}, $\pr$ is called
\New{deletion of the first row}, $\times$ is called \New{Cartesian
  product}, and $\wedge$ is called \New{intersection}.

For $m \in \N_{+}$ and an equivalence relation $\epsilon$ on
$\{1, \dots, m\}$, let
\[
  \delta^{m}_{\epsilon}:=\delta^m_{\epsilon,A} := \{ \, (a_1, \dots,
  a_m) \in A^m \mid 
 (i, j) \in \epsilon \implies a_i = a_j \, \}.
\]
Formally we also allow $\epsilon=\top$ (where $\top$ is considered as
an extra top element in the lattice of all equivalence relations,
i.e., $\epsilon\subsetneq\top$ for all equivalence relations
$\epsilon$) and define
\[ \delta^{m}_{\top}:=\delta^{m}_{\top,A}:=\emptyset^{(m)}.\]
Relations of the form $\delta^m_{\epsilon,A}$ for some $m$ and
$\epsilon$ are called \New{diagonal relations} on $A$. The set of all
diagonal relations on $A$ is denoted by $D_{A}$. Examples of diagonal
relations are the full $m$-ary relation $A^m$, in particular
$\nabla:=\nabla_{A}:=A^{2}$ (formally this is
$\delta^{2}_{\{(1,1),(2,2)\},A}$), and the binary \New{equality}
relation $\Delta:=\Delta_{A}:= \{ \, (x,x) \mid x \in A \, \}$
(formally this is $\delta^{2}_{\{1,2\}^{2},A}$).

A set $Q$ of relations on $A$ is called a \New{relational clone} if it
is closed under the ``elementary operations'' $\zeta$, $\tau$, $\pr$,
$\times$ and $\wedge$ and contains the diagonal relations. For
$Q \subseteq \Rel(A)$, we denote by $\Rg{Q}$ or $\Rg[A]{Q}$ the
relational clone generated by $Q$, i.e., the smallest relational clone
containing $Q$.

\begin{definition}[$\boldsymbol{S}$\hyp{}relations]\label{C3}
  Let $\SRela[m](A)$ be the set of all families
  $\rho = (\rho_{s})_{s \in S}$ of $m$-ary relations
  $\rho_{s} \subseteq A^{m}$, and
  $\SRel(A) := \bigcup_{m \in \N_{+}} \SRela[m](A)$, the elements of
  which are called (finitary) \New{$S$\hyp{}relations}.

  Sometimes, instead of $\rho=(\rho_{s})_{s\in S}$ we use the notation
  $\rho=(r_{1},\dots,r_{n})$ where $r_{1},\dots,r_{n}$ is a list of
  \textsl{all} elements of all $\rho_{s}$, i.e.,
  $n=\sum_{s\in S}|\rho_{s}|$, together with the corresponding signum
  $\blambda_{\rho}:=(s_{1},\dots,s_{n})$ specifying which element
  belongs to which part $\rho_{s}$, i.e., if $r_{i}$ was chosen from
  $\rho_{s}$ then we put $s_{i}=s$, such that
  $\rho_s = \{ \, r \mid \text{$\exists i \in \{1, \dots, n\}$:
    $s_i = s$ and $r_i = r$} \, \} $. Note that in $\blambda_{\rho}$
  each $s\in S$ appears exactly $|\rho_{s}|$ times.

  Remark: The signum $\blambda_{\rho}$ is characteristic for $\rho$
  and unique up to permutation of the $s_{i}$'s. One might order the
  entries such that all signa $s_{i}$ with $s_{i}=s$ appear
  consecutively, but we keep more flexibility and allow each signum
  (as above) for $\blambda_{\rho}$ with the property that each
  $s\in S$ appears exactly $|\rho_{s}|$ times.
\end{definition}

It is straightforward to define operations on $\SRel(A)$ analogously
to the above-defined operations for usual relations, just by applying
them componentwise to $S$\hyp{}relations. This will lead to the notion
of $S$\hyp{}relational clones. We mainly adopt the approach known from
multi\hyp{}sorted algebras as developed in \cite[Section 4, pages 13--14]{LehPW2018a}.
However, we still have to introduce special, in some sense ``trivial'' $S$\hyp{}relations,
called $S$\hyp{}diagonals, which generalize the usual diagonal
relations. The motivation will become clear later in
Proposition~\ref{D1a}.

\begin{definition}[$\boldsymbol{S}$\hyp{}diagonals]\label{C4a}
  An $S$\hyp{}relation $(\rho_{s})_{s\in S}$, say $m$-ary, is called
  an \New{$S$\hyp{}diagonal} or an \New{$S$\hyp{}diagonal
    $S$\hyp{}relation}) if $\rho_{s}$ is a diagonal relation
  $\delta^{m}_{\epsilon_{s}}\in D_{A}$ for each $s\in S$ and the
  following condition is satisfied for all $s,t\in S$:
  \[
    Ss \subseteq St \implies
    \delta^{m}_{\epsilon_{s}}\subseteq\delta^{m}_{\epsilon_{t}}
  \]
  Here $St$ denotes the left ideal $\{ \, st \mid s \in S \, \}$
  generated by $t$. Note that $Ss \subseteq St \iff s \in St$ and
  $\delta^{m}_{\epsilon_{s}}\subseteq\delta^{m}_{\epsilon_{t}}\iff
  \epsilon_{t}\subseteq \epsilon_{s}$ (the case $\epsilon_{s}=\top$,
  i.e., $\delta^{m}_{\epsilon_{s}}=\emptyset$, is allowed).

  The set of all $S$\hyp{}diagonals is denoted by $\SD_{A}$.
\end{definition}

\begin{remark}\label{C4b}
  Let
  $I(S) := \{ \, s \in S \mid \exists \bar s \in S \colon \bar{s}s = e
  \, \}$ be the set of all elements for which a left inverse
  exists. Then (for finite monoids $S$) any left inverse $\bar s$ is
  also a right inverse (thus $\bar s=s^{-1}$) and $I(S)$ is a group
  (with unit $e$), the largest subgroup of $S$. We have
  $t\in I(S)\iff St=S$ (otherwise $St\subsetneq S$). If $S$ is a
  group, then an $S$\hyp{}diagonal $(\delta_{\epsilon_{s}})_{s\in S}$
  must be of the form $(\delta^{m}_{\epsilon})_{s\in S}$
  ($\epsilon_{t}=\epsilon_{s}$ for all $s,t\in S$ since $Ss=St=S$).
\end{remark}

\begin{definition}[$\boldsymbol{S}$\hyp{}relational clones]\label{C4} 
  A set of $S$\hyp{}relations is called an \New{$S$\hyp{}relational
    clone} if it contains $\delta^{S}$ and is closed under the
  operations $\zeta$, $\tau$, $\pr$, $\times$, and $\wedge$ as well as
  under the operations $\mu_{v}$ and $\sqcap^{v}$, which we will refer
  to as \New{index translation by $v$} and
  \New{$v$\hyp{}self\hyp{}intersection} for $v\in S$. These
  operations are defined as follows. For $\rho = (\rho_s)_{s \in S}$
  and $\rho' = (\rho'_s)_{s \in S}$ in $\SRel(A)$, and $v\in S$, we
  let
  \begin{enumerate}[label=\textup{(\arabic*)}]
  \item\label{C4-1} $\delta^{S} := (\Delta_{A})_{s \in S}$, i.e.,
    $\delta^{S}_{s} = \Delta_{A}$ for all $s \in S$,

  \item\label{C4-2}
    $\zeta(\rho) = \zeta ((\rho_s)_{s \in S}) := (\zeta \rho_s)_{s \in
      S}$ \; (\New{cyclic shift}),
  \item\label{C4-3}
    $\tau(\rho) = \tau ((\rho_s)_{s \in S}) := (\tau \rho_s)_{s \in
      S}$ \; (\New{transposition of the first two rows}),
  \item\label{C4-4}
    $\pr(\rho) = \pr ((\rho_s)_{s \in S}) := (\pr \rho_s)_{s \in S}$
    \;(\New{deletion of the first row}),
  \item\label{C4-5}
    $\rho \times \rho' = (\rho_s)_{s \in S} \times (\rho'_s)_{s \in S}
    := (\rho_s \times \rho'_s)_{s \in S}$ \; (\New{Cartesian
      product}),
  \item\label{C4-6}
    $\rho \wedge \rho' = (\rho_s)_{s \in S} \wedge (\rho'_s)_{s \in S}
    := (\rho_s \wedge \rho'_s)_{s \in S}$ \; (\New{intersection}),

  \item\label{C4-7}
    $\mu_v(\rho) = \mu_v((\rho_{s})_{s \in S}) := (\rho_{sv})_{s \in
      S}$ \; (\New{index translation by $v$}),

  \item\label{C4-8}
    $\sqcap^{v}\rho =((\sqcap^{v}\rho)_{s})_{s\in S}
    :=(\bigcap\{\rho_{s'}\mid s'v=s\})_{s\in S}$
    (\New{$v$-self-intersection})\\
    {\small(i.e., via the (right)
      multiplicative action of an element $v\in S$)},\\
    in particular, $(\sqcap^{v}\rho)_{s}=\bigcap\emptyset=A^{m}$ if
    $s\in S\setminus Sv$ and $\ar(\rho)=m$.

  \end{enumerate}

  For $Q \subseteq \SRel(A)$ the \New{$S$\hyp{}relational clone
    generated by $Q$}, i.e., the least $S$\hyp{}relational clone
  containing $Q$, is denoted by $\SRg{Q}$ or $\SRg[A]{Q}$.
\end{definition}

The operations \ref{C4-7} and \ref{C4-8} are special cases of a more
general operation called $\cM$-self-intersection:
\begin{definition}
  \label{C4-Self} Let $\cM=(M_{s})_{s\in S}$ be a family of subsets of
  $S$ satisfying the following condition
  \begin{align*}
    \text{(*)}&\quad \forall s,s'\in S:
                s'M_{s}\subseteq M_{s's}\,.
  \end{align*}
  Then we define
  \begin{enumerate}[label=\textup{(\arabic*)},ref=\textup{(\arabic*)}]\addtocounter{enumi}{8}
  \item\label{C4-9} \label{C4-7y}
    $\sqcap_{\cM}\rho=((\sqcap_{\cM}\rho)_{s})_{s\in S} := (\bigcap \{
    \, \rho_{s'} \mid s' \in M_{s} \, \})_{s\in S}$ \quad
    \text{(\New{$\cM$\hyp{}self\hyp{}intersection}),}
  \end{enumerate}
  in particular, $(\sqcap_{\cM}\rho)_{s}:=A^{m}$ if $M_{s}=\emptyset$
  and $\ar(\rho)=m$.
    
  Note that $(\sqcap_{\cM}\rho)_{s}\subseteq\rho_{s'}$ for all
  $s'\in M_{s}$.
\end{definition}

\begin{remark}\label{LM} For $v\in S$, consider the special families 
  $\cT^{v}:=(\{sv\})_{s\in S}$ and $\cM^{v}=(M^{v}_{s})_{s\in S}$ with
  $M^{v}_{s} := \{ x \in S \mid xv = s \}$ (which easily can be seen
  to satisfy condition (*) of Definition~\ref{C4-Self}). Then we have
  $\mu_{v}(\rho)=\sqcap_{\cT^{v}}(\rho)$ (cf.~\ref{C4}\ref{C4-7}) and
  $\sqcap^{v}\rho=\sqcap_{\cM^{v}}\rho$ (cf.~\ref{C4}\ref{C4-8}).

  For $v \in S$ let $\alpha_{v}\colon S \to S$, $x \mapsto xv$ be the
  right multiplication with $v$. Then $M^{v}_{s}=\alpha_{v}^{-1}(s)$
  for $s\in S$. For later use we mention the following properties of
  $\cM^{v}=(M^{v}_{s})_{s\in S}$:
  \begin{enumerate}[label=\textup{(\alph*)}]
  \item\label{LMa} $e\in M^{v}_{v}$,

  \item\label{LMb} $M^{v}_{s}\cap M^{v}_{t}=\emptyset$ for $s,t\in S$,
    $s\neq t$,

  \item\label{LMc} $\forall s,t\in S: s\in M^{v}_{t}\implies sv=t$.
  \end{enumerate}
  These properties can be checked easily (e.g., \ref{LMb} follows from
  the fact that the $M^{v}_{s}$ ($s\in S$) are the equivalence classes
  of the kernel of $\alpha_{v}$).
\end{remark}

\begin{remark}\label{C5} 
  (A) If $v\in S$ is an invertible element of the monoid $S$, then
  $M^{v}_{s}=\{sv^{-1}\}$ for $s\in S$, thus $\cM^{v}=\cT^{v^{-1}}$,
  in particular the operation $\sqcap^{v}$ equals $\mu_{v^{-1}}$
  (cf.~\ref{LM} and \ref{C4}\ref{C4-7},\ref{C4-8}). Consequently, if
  $S$ is a group and therefore each element is invertible,
  $S$\hyp{}relational clones are characterized by the closure under
  the operations \ref{C4-1}--\ref{C4-7} of Definition~\ref{C4}.

  (B) Analogously to the case of usual relational clones
  (cf.~\cite[Part~II, 2.5]{Lau06} or \cite[1.1.9]{PoeK79}), with the
  operations \ref{C4-1}--\ref{C4-8} it is possible to construct many
  other operations under which an $S$\hyp{}relational clone is
  closed. We mention here some: all $S$\hyp{}diagonal
  $S$\hyp{}relations (see Lemma~\ref{C4c}), arbitrary permutation of
  rows, deletion of arbitrary rows, identification of rows, doubling
  of rows, relational product, and so on. In particular, we will need
  in the proofs of later theorems the following operations that are
  derivable from the ``elementary operations''. For an $m$-ary
  relation $\rho$ and any $z_1, \dots, z_t \in \{1, \dots, m\}$ (not
  necessarily distinct elements), the \New{projection} of $\rho$ to
  rows $z_1, \dots, z_t$ is the $t$-ary relation
  \[
    \pr_{z_1, \dots, z_t}(\rho) :=
 \{ \, (a_{z_1}, \dots, a_{z_t})
    \in A^t \mid (a_1, \dots, a_m) \in \rho \, \}.
  \]
  This naturally extends to $S$\hyp{}relations by componentwise
  application: for $\rho = (\rho_s)_{s \in S} \in \SRela[m](A)$,
  $\pr_{z_1, \dots, z_t}(\rho) := (\pr_{z_1, \dots, z_t}(\rho_s))_{s
    \in S}$. Clearly, $\pr(\rho)=\pr_{2,\dots,m}(\rho)$ for an $m$-ary
  $\rho$ (cf.\ Definition~\ref{C4}\ref{C4-4}).

  Note also that the \New{empty $S$\hyp{}relation}
  $(\emptyset)_{s \in S}$ (that can be considered as
  $(\emptyset^{(m)})_{s\in S}$ for an arbitrary $m\in\N_{+}$) belongs
  to every $S$\hyp{}relational clone; it is obtained by taking the
  intersection of two $S$\hyp{}relations of distinct arities.

  (C) It is well known (see, e.g., \cite[2.1.3(i)]{PoeK79}) that in
  the classical case of usual (unsigned) relational clones, the
  relational clone $\Rg{Q}$ generated by a set $Q$ of relations (using
  the ``elementary operations'' $\delta,\zeta,\tau,\pr,\times,\land$)
  equals the set of relations that are primitively positively
  first-order definable (pp\hyp{}definable) from the relations in
  $Q$. More explicitly (cf.~\cite[1.6]{Poe04a}): Each primitive
  positive first order formula
  $\phi=\phi(\bar\sigma_{1},\dots,\bar\sigma_{q};x_{1},\dots,x_{m})$,
  (i.e., $\phi$ contains only $\exists, \land, =$, and relation
  symbols, say $m_{i}$-ary $\bar\sigma_{i}$, $i\in \{1,\dots,q\}$, and
  free variable symbols, say $x_{1},\dots,x_{m}$) defines a so-called
  \New{logical operation} $t_{\phi}$ which can be applied to
  $m_{i}$-ary relations $\sigma_{i}\subseteq A^{m_{i}}$ and gives the
  $m$-ary relation
  \[
    t_{\phi}(\sigma_{1},\dots,\sigma_{q}) := \{ \,
    (a_{1},\dots,a_{m})\in A^{m} \mid {} \models
    \phi(\sigma_{1},\dots,\sigma_{q};a_{1},\dots,a_{m}) \, \}.
  \]
  Then the closure $\Rg{Q}$ is the closure under all
  pp\hyp{}definitions, which means explicitly the closure under all
  logical operations $t_{\phi}$.

  As for $S$\hyp{}relational clones, it follows from
  Definition~\ref{C4} that \emph{a set of $S$\hyp{}relations is an
    $S$\hyp{}relational clone if and only if it is closed under
    pp\hyp{}definitions} (now we apply the same pp\hyp{}formula to
  each component of $S$\hyp{}relations), \emph{index translations} and
  \emph{self\hyp{}intersections}. Moreover, \emph{$S$-relational
    clones are closed also under $\cM$-self-intersections} for
  arbitrary families $\cM$ satisfying \ref{C4-Self}(*). In order to
  see this, in the proof of Lemma~\ref{D3A} we shall explicitly show
  that each set of the form $\SInv F$ -- and therefore, due to
  Theorem~\ref{Thm-II}, also each $S$-relational clone -- is closed
  under $\cM$-self-intersections.
\end{remark}

\begin{lemma}\label{C4c} 
  $\SD_{A}=\SRg[A]{\emptyset}=\SRg[A]{\delta^{S}}$ is the least
  $S$\hyp{}relational clone contained in every $S$\hyp{}relational
  clone.
\end{lemma}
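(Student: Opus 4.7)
The plan is to prove the lemma in three steps: observe the trivial equality $\SRg[A]{\emptyset}=\SRg[A]{\delta^S}$, show that $\SD_A$ is itself an $S$\hyp{}relational clone, and exhibit every $S$\hyp{}diagonal as a derived term from $\delta^S$. For the first step: by Definition~\ref{C4}\ref{C4-1} every $S$\hyp{}relational clone contains $\delta^S$, so $\SRg[A]{\emptyset}=\SRg[A]{\delta^S}$; this common value is by construction the least $S$\hyp{}relational clone, and so it only remains to identify it with $\SD_A$.

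For the inclusion $\SRg[A]{\delta^S}\subseteq\SD_A$ I would check that $\SD_A$ is closed under the seven operations of Definition~\ref{C4} and contains $\delta^S$ (where $\epsilon_s=\{1,2\}^2$ for every $s$, so the monotonicity condition of Definition~\ref{C4a} is vacuous). Each closure check has two components: the entries remain diagonal relations, and the monotonicity condition $Ss\subseteq St\Rightarrow\delta^m_{\epsilon_s}\subseteq\delta^m_{\epsilon_t}$ persists. For $\zeta,\tau,\pr,\times,\wedge$ both points are immediate, since these act componentwise and are monotone in the $\subseteq$\hyp{}order on diagonals; for $\mu_v$ one uses $Ss\subseteq St\Rightarrow S(sv)\subseteq S(tv)$ together with the monotonicity of $\rho$. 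The less transparent case is $\sqcap^v$: given $Ss\subseteq St$ and any $s''\in M^v_t$, writing $s=wt=(ws'')v$ shows $ws''\in M^v_s$, hence $(\sqcap^v\rho)_s\subseteq\rho_{ws''}\subseteq\rho_{s''}$ (the second inclusion via $S(ws'')\subseteq Ss''$ and monotonicity of $\rho$); intersecting over all such $s''$ yields $(\sqcap^v\rho)_s\subseteq(\sqcap^v\rho)_t$.

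For the reverse inclusion $\SD_A\subseteq\SRg[A]{\delta^S}$, I would first generate every ``uniform'' $S$\hyp{}diagonal $(\delta^m_{\epsilon})_{s\in S}$ from $\delta^S$ by applying $\zeta,\tau,\pr,\times,\wedge$ componentwise; this mirrors the classical construction of arbitrary diagonal relations from $\Delta_A$ (e.g., $\delta^3_{\{1,2,3\}^2}=(\Delta_A\times A)\wedge(A\times\Delta_A)$ with $A=\pr\Delta_A$). Then, for each $v\in S$ and each diagonal $\delta^m_{\epsilon}$, the $S$\hyp{}relation
\[
\sigma^{v,\epsilon}\;:=\;\sqcap^v\bigl((\delta^m_{\epsilon})_{s\in S}\bigr)
\]
lies in $\SRg[A]{\delta^S}$ and has $s$\hyp{}component $\delta^m_{\epsilon}$ for $s\in Sv$ and $A^m$ otherwise. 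The key identity is that an arbitrary $S$\hyp{}diagonal $\rho=(\delta^m_{\epsilon_s})_{s\in S}$ is recovered as
\[
\rho\;=\;\bigwedge_{s\in S}\sigma^{s,\epsilon_s},
\]
because at a fixed position $t$ the right\hyp{}hand side equals $\bigcap_{s\,:\,t\in Ss}\delta^m_{\epsilon_s}$, the monotonicity of $\rho$ forces every term of this intersection to contain $\delta^m_{\epsilon_t}$, and the term $s=t$ equals $\delta^m_{\epsilon_t}$, so the intersection collapses to $\rho_t$. (When $S$ is finite this meet is finite; more generally, for finite $A$ only finitely many distinct $\sigma^{s,\epsilon_s}$ arise since the values $\epsilon_s$ range over a finite set.) The main obstacle is the bookkeeping in the $\sqcap^v$\hyp{}closure argument, where the left\hyp{}ideal preorder $Ss\subseteq St$ must be coordinated with the fibres $M^v_s=\alpha_v^{-1}(s)$; everything else reduces to the classical manipulations of diagonal relations performed uniformly in each signum\hyp{}component.
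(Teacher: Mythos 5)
Your proposal is correct and follows essentially the same route as the paper: the reverse inclusion uses the identical key identity $\rho=\bigwedge_{v\in S}\sqcap^{v}\bigl((\delta^{m}_{\epsilon_{v}})_{s\in S}\bigr)$ (your $\sigma^{v,\epsilon_v}$ is exactly the paper's $\sqcap^{v}\delta^{S}_{\epsilon_{v}}$), combined with the classical generation of uniform diagonals from $\delta^{S}$. The only difference is that you carry out in full the direct verification that $\SD_{A}$ is closed under the operations of Definition~\ref{C4} (including the $\sqcap^{v}$ case, which you handle correctly), whereas the paper merely remarks that this ``can be directly checked'' or deduced from Proposition~\ref{D1a} and Lemma~\ref{D3A}.
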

\begin{proof}
  We show that each $S$\hyp{}diagonal $S$\hyp{}relation can be
  generated from $\delta^{S}$. It is well known that
  $D_{A}=\Rg[A]{\Delta}$ (cf., e.g., \cite[1.1.9(R1)]{PoeK79}, the
  diagonal relation $\delta_{3}^{\{1;2,3\}}(A)$ used there, can be
  derived from $\Delta$ with the ``elementary operations'' from
  Definition~\ref{C3a} as follows:
  $\delta_{3}^{\{1;2,3\}}(A)=\pr(\Delta)\times \Delta$). Thus all
  $(\delta^{m}_{\epsilon})_{s\in S}$ can be constructed from
  $\delta^{S}=(\Delta)_{s\in S}$. We show that all further
  $S$\hyp{}diagonals can be derived from these.

  Let $\rho=(\delta^{m}_{\epsilon_{s}})_{s\in S}\in \SD_{A}$ (for
  notation, see Definition~\ref{C4a}) and
  $\delta^{S}_{\epsilon}:=(\delta^{m}_{\epsilon})_{s\in S}$. According
  to Definition~\ref{C4}\ref{C4-8}, for $v\in S$,
  $(\sqcap^{v}\delta^{S}_{\epsilon})_{s}$ equals
  $\delta^{m}_{\epsilon}$ if $s\in Sv$ and it equals $A^{m}$ if
  $s\in S\setminus Sv$. Consequently, the conditions in \ref{C4a} for
  the $S$-diagonal $\rho$ imply
  $\rho=\bigwedge \{ \, \sqcap^{v}\delta^{S}_{\epsilon_{v}} \mid v \in
  S \, \} \in \SRg{\delta^{S}}$.

  It remains to mention that $\SD_{A}$ really is an
  $S$\hyp{}relational clone. Namely, the closure under each of the
  operations \ref{C4-1}--\ref{C4-8} of Definition~\ref{C4}
 can be
  directly checked (however, it also follows immediately from
  Proposition~\ref{D1a} and Lemma~\ref{D3A}).
\end{proof}


\section{Invariant $\boldsymbol{S}$-relations and
  $\boldsymbol S$-polymorphisms \linebreak and the Galois connection
  $\boldsymbol{\SPol}$--$\boldsymbol{\SInv}$}\label{secD}

In this section we introduce the Galois connection $\SPol$--$\SInv$
induced by the $S$\hyp{}preservation relation $\Spreserves$ and give
some preliminary results. This parallels the classical Galois
connection $\Pol$--$\Inv$ induced by the preservation relation on
usual (unsigned) operations and relations, which we will first briefly
recall.

\begin{notation}
  Let $f \in \Opa[n](A)$, and let
  $r_i = (r_i^{(1)}, \dots, r_i^{(m)}) \in A^m$
  ($i \in \{1, \dots, n\}$) We write
  \[
    f(r_1, \dots, r_n) := (f(r_1^{(1)}, \dots, r_n^{(1)}), \dots,
    f(r_1^{(m)}, \dots, r_n^{(m)}) )
  \]
  (componentwise application of $f$ to $m$-tuples). Furthermore, if
  $\rho_i \subseteq A^m$ ($i \in \{1, \dots, n\}$), then we let
  \[
    f( \rho_1, \dots, \rho_n ) :=
 \{ \, f(r_1, \dots, r_n) \mid r_1
    \in \rho_1, \dots, r_n \in \rho_n \, \}.
  \]
\end{notation}

\begin{definition}[Preservation]
  Let $f \in \Opa[n](A)$ and $\rho \in \Rela[m](A)$. We say that $f$
  \New{preserves} $\rho$ (or $f$ is a \New{polymorphism} of $\rho$, or
  $\rho$ is an \New{invariant} of $f$), and we write
  $f \preserves \rho$, if $f(\rho, \dots, \rho) \subseteq \rho$.
\end{definition}

The relation $\preserves$ induces a Galois connection between
operations and relations. The corresponding operators are denoted as
follows.

\begin{definition}\label{def:Pol-Inv}
  Let $F \subseteq \Op(A)$ and $Q \subseteq \Rel(A)$. Then we define
  \begin{align*}
    \Pol Q &:= \{ \, f \in \Op(A) \mid \forall\, \rho \in Q \colon f \preserves \rho \, \}
    && \text{(polymorphisms)}, \\
    \Inv F &:= \{ \, \rho \in \Rel(A) \mid \forall\, f \in F \colon f \preserves \rho \, \}
    && \text{(invariant relations)}.
  \end{align*}
\end{definition}

It is well known that $\Pol Q$ is a clone for any
$Q \subseteq \Rel(A)$ and $\Inv F$ is a relational clone for any
$F \subseteq \Op(A)$. Moreover, if $A$ is finite, then it holds that
$\Sg{F} = \Pol \Inv F$ and $\Rg{Q} = \Inv \Pol Q$ (\cite{BodKKR69a},
cf.\ \cite[Folgerung 1.2.4]{PoeK79}).

\begin{definition}[$\boldsymbol{S}$\hyp{}preservation]\label{D1}
  Let $f \in \SOp(A)$ with $\sgn(f) = (s_{1}, \dots, s_{n})$ and
  $\rho = (\rho_{s})_{s \in S} \in \SRela[m](A)$. The preservation
  property $\Spreserves$ is defined by
  \begin{align}\tag{1}\label{D1i}
    f \Spreserves (\rho_{s})_{s \in S} :\iff \forall s \in S \colon
    f(\rho_{s_{1}s}, \dots, \rho_{s_{n}s}) \subseteq \rho_{s}.
  \end{align}
  If $f \Spreserves \rho$, then we say that $f$
  \New{$S$\hyp{}preserves} $\rho$, or $f$ is an
  \New{$S$\hyp{}polymorphism} of $\rho$, or $\rho$ is \New{invariant}
  for $f$. Note that in particular we have
  $f(\rho_{s_{1}}, \dots, \rho_{s_{n}}) \subseteq \rho_{e}$ ($e$ is
  the neutral element of $S$). If it is clear from the context that we
  deal with $S$\hyp{}operations and $S$\hyp{}relations, then, for
  $f\Spreserves\rho$, we also say $f$ \New{preserves} $\rho$ or $f$ is
  a \New{polymorphism} of $\rho$.
\end{definition}

For $S$\hyp{}relations or $S$\hyp{}operations of a special form, the
$S$\hyp{}preservation property can be expressed by the usual
preservation property:

\begin{lemma}\label{D2}
  \leavevmode
  \begin{enumerate}[label=\textup{(\roman*)}]
  \item\label{D2a} For $f \in \SOp(A)$ and $\sigma \in \Rel(A)$ we
    have
    \begin{align*}
      f \Spreserves (\sigma)_{s \in S} \iff \mathring{f} \preserves \sigma.
    \end{align*}
    Here $(\sigma)_{s \in S}$ is the $S$\hyp{}relation
    $(\rho_{s})_{s \in S}$ with $\rho_{s} = \sigma$ for all $s \in S$.

  \item \label{D2b} If $\sgn(f) = (e, \dots, e)$ and
    $\rho = (\rho_{s})_{s \in S}$, then
    \begin{equation*}
      f \Spreserves \rho
      \iff
      \forall s \in S \colon \mathring{f} \preserves \rho_{s}.
    \end{equation*}
  \end{enumerate}
\end{lemma}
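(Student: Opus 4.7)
The plan is to unfold the definition of $\Spreserves$ from \ref{D1}\eqref{D1i} in each of the two special cases and observe that it collapses to (a conjunction of) the ordinary preservation relation.

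For part \ref{D2a}, write $\sgn(f) = (s_1, \dots, s_n)$ and let $\rho = (\rho_s)_{s \in S}$ with $\rho_s = \sigma$ for every $s \in S$. Then for every $s \in S$ and every $i$, $\rho_{s_i s} = \sigma$, so the condition $f(\rho_{s_1 s}, \dots, \rho_{s_n s}) \subseteq \rho_s$ from \eqref{D1i} becomes $\mathring{f}(\sigma, \dots, \sigma) \subseteq \sigma$ (the signum plays no role in the componentwise application, which only uses the underlying function $\mathring{f}$). This single condition is exactly $\mathring{f} \preserves \sigma$, and it is the same for every $s \in S$, so the universal quantifier over $S$ in \eqref{D1i} is vacuous.

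For part \ref{D2b}, suppose $\sgn(f) = (e, \dots, e)$. Then $s_i s = e s = s$ for every $i$ and every $s \in S$, so the condition $f(\rho_{s_1 s}, \dots, \rho_{s_n s}) \subseteq \rho_s$ becomes $\mathring{f}(\rho_s, \dots, \rho_s) \subseteq \rho_s$, which is precisely $\mathring{f} \preserves \rho_s$. Quantifying over $s \in S$ (as in \eqref{D1i}) gives the claimed equivalence.

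There is essentially no obstacle here; both items are direct rewrites of Definition~\ref{D1}, the only point worth being careful about is to note that the componentwise action of an $S$-operation on tuples only involves $\mathring{f}$, so that the translation from $f \Spreserves \cdot$ to $\mathring{f} \preserves \cdot$ is immediate. The proof can therefore be given as two short computations, one per item.
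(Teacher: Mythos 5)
Your proof is correct and takes the same route as the paper, which simply observes that both items follow immediately from the definition of $S$\hyp{}preservation; you have merely written out the two computations (that $\rho_{s_i s}=\sigma$ in case \ref{D2a}, and that $s_i s = s$ in case \ref{D2b}) that the paper leaves implicit. No gaps.
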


\begin{proof}
  This follows immediately from the definition of
  $S$\hyp{}preservation (for \ref{D2b} see also Proposition~\ref{D8}).
\end{proof}

The $S$\hyp{}preservation relation $\Spreserves$ induces a Galois
connection between $S$\hyp{}operations $\SOp(A)$ and
$S$\hyp{}relations $\SRel(A)$. The corresponding operators are denoted
as follows.

\begin{definition}\label{D3}
  Let $F \subseteq \SOp(A)$ and $Q \subseteq \SRel(A)$. Then we define
  \begin{align*}
    \SPol Q &:= \{ \, f \in \SOp(A) \mid \forall\, \rho \in Q \colon f \Spreserves \rho \, \} && \text{($S$\hyp{}polymorphisms)}, \\
    \SInv F &:= \{ \, \rho \in \SRel(A) \mid \forall\, f \in F \colon f \Spreserves \rho \, \} && \text{(invariant $S$\hyp{}relations)}.
  \end{align*}
\end{definition}

\begin{examples}\label{Ex0a}
  Let $(A,\leq)$ be a poset and $S = \{\mathord{+},
  \mathord{-}\}$. Then the $S$\hyp{}preclone $F$ considered in
  Example~\ref{Ex0} can be characterized as
  \[
    F = \SPol \rho \text{ for the $S$\hyp{}relation }
    \rho=(\rho_{+},\rho_{-}) := (\mathord{\leq}, \mathord{\geq}).
  \]

  Moreover, let
  $\sigma:=(B_{1}\times B_{1})\cup\ldots\cup(B_{m}\times B_{m})$ where
  $B_{1},\dots,B_{m}$ are the connected components of the partial
  order $\leq$. Then the $S'$-preclone $F'$ and the
  $\widehat{S}$-preclone $\widehat{F}$ considered in Example~\ref{Ex1}
  can be characterized by $S'$- and $\widehat{S}$-relations, resp., as
  follows:
  \begin{align*}
    F' &= \SpPol \{(\mathord{\leq}, \Delta_{A}), (\Delta_{A}, \sigma)\} \,, \\
    \widehat{F} &= \ShPol \{(\mathord{\leq}, \mathord{\geq}, \Delta_{A}), (\Delta_{A}, \Delta_{A}, \sigma)\} \,.
  \end{align*}
\end{examples}

Trivial projections and $S$\hyp{}diagonals play a special role: they
can be considered as trivial with respect to $S$\hyp{}preservation (in
particular, this motivates why $S$\hyp{}diagonals should belong to
each $S$\hyp{}relational clone, cf.\ Definition~\ref{C4}\ref{C4-1} and
Lemma~\ref{C4c}):

\begin{proposition}\label{D1a}
  We have $\SJ_{A}=\SPol\SRel(A)$ and $\SD_{A}=\SInv\SOp(A)$, i.e.,
  \textup{(}trivial\textup{)} projections are those
  $S$\hyp{}operations which $S$\hyp{}preserve \textsl{every}
  $S$\hyp{}relation, and $S$\hyp{}diagonals are those
  $S$\hyp{}relations which are invariant for \textsl{every}
  $S$\hyp{}operation.
\end{proposition}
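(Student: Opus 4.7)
The proposition is two equalities; each direction amounts to plugging in the right test object.

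For $\SJ_{A}\subseteq\SPol\SRel(A)$, let $f=(p_i^{(n)})^{\lambda}$ with $\lambda=(s_1,\dots,s_n)$ and $s_i=e$. For any $\rho=(\rho_s)_{s\in S}$ and any $s\in S$, componentwise application of $p_i^{(n)}$ to tuples $r_j\in\rho_{s_js}$ returns $r_i\in\rho_{s_is}=\rho_{es}=\rho_s$, so $f\Spreserves\rho$. For the converse, suppose $f$ is nontrivial. If $\mathring f$ is not a projection, the classical fact $\Pol\Rel(A)=J_A$ (projections are exactly the operations preserving every relation) furnishes $\sigma\in\Rel(A)$ with $\mathring f\notpreserves\sigma$, whence $f$ fails to $S$-preserve $(\sigma)_{s\in S}$ by Lemma~\ref{D2}\ref{D2a}. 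Otherwise $\mathring f=p_i^{(n)}$ with $s_i\ne e$, and the unary $S$-relation defined by $\rho_e:=\emptyset$ and $\rho_s:=A$ for $s\ne e$ satisfies $f(\rho_{s_1},\dots,\rho_{s_n})=\rho_{s_i}=A\not\subseteq\emptyset=\rho_e$ at the clause $s=e$ of Definition~\ref{D1}.

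For $\SD_{A}\subseteq\SInv\SOp(A)$, let $\rho=(\delta^m_{\epsilon_s})_{s\in S}$ be an $S$-diagonal and let $f\in\SOpa[n](A)$ have $\sgn(f)=(s_1,\dots,s_n)$. Fix $s\in S$. For every $j$ the trivial inclusion $S(s_js)\subseteq Ss$ combined with the compatibility condition of Definition~\ref{C4a} yields $\epsilon_s\subseteq\epsilon_{s_js}$. Hence tuples $r_j\in\delta^m_{\epsilon_{s_js}}$ agree on all rows indexed by any pair in $\epsilon_s$, and so does $f(r_1,\dots,r_n)$, placing it in $\rho_s$; thus $f\Spreserves\rho$.

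For the converse $\SInv\SOp(A)\subseteq\SD_{A}$, fix $\rho=(\rho_s)_{s\in S}\in\SInv\SOp(A)$. Applying the hypothesis to $S$-operations of the form $g^{(e,\dots,e)}$ for $g\in\Op(A)$ and invoking Lemma~\ref{D2}\ref{D2b} shows that each $\rho_s$ is preserved by every ordinary operation on $A$; the classical identity $\Inv\Op(A)=D_A$ then forces $\rho_s=\delta^m_{\epsilon_s}$ for some $\epsilon_s$. For the compatibility clause, suppose $Ss\subseteq St$ and write $s=ut$ with $u\in S$; applying the hypothesis to the unary $S$-operation $\id_A^{u}$ (signum $(u)$) gives $\rho_{ut}\subseteq\rho_t$, i.e.\ $\rho_s\subseteq\rho_t$, so $\rho\in\SD_{A}$.

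The main step requiring foresight is the choice of test objects. The ``fully-$e$'' operations $g^{(e,\dots,e)}$ decouple $S$-preservation into classical preservation on each component via Lemma~\ref{D2}, while the unary operations $\id_A^{u}$ precisely probe the $S$-diagonal compatibility dictated by right multiplication in $S$. Everything else is a routine unpacking of the $S$-preservation definition together with the classical Pol--Inv theory.
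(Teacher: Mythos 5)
Your overall strategy coincides with the paper's: reduce via Lemma~\ref{D2} to the classical facts $\Pol\Rel(A)=J_{A}$ and $\Inv\Op(A)=D_{A}$, and probe the $S$\hyp{}diagonal compatibility condition with the unary operations $\id^{u}$. Three of your four inclusions are correct and essentially identical to the paper's argument.

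There is, however, a genuine gap in the inclusion $\SPol\SRel(A)\subseteq\SJ_{A}$, in the case $\mathring f=p_i^{(n)}$ with $s_i\neq e$. You claim that for the unary $S$\hyp{}relation with $\rho_e=\emptyset$ and $\rho_s=A$ ($s\neq e$) one has $f(\rho_{s_1},\dots,\rho_{s_n})=\rho_{s_i}=A$. But by the paper's definition, $f(\rho_1,\dots,\rho_n)=\{\,f(r_1,\dots,r_n)\mid r_1\in\rho_1,\dots,r_n\in\rho_n\,\}$ is empty as soon as \emph{one} of the argument relations is empty. So if the nontrivial projection has a fictitious argument of signum $e$ --- for instance $f=(p_1^{(2)})^{(s,e)}$ with $s\neq e$ --- then $\rho_{s_2}=\rho_e=\emptyset$, hence $f(\rho_{s_1},\rho_{s_2})=\emptyset\subseteq\rho_e$, and in fact this $f$ satisfies $f(\rho_{s_1t},\rho_{s_2t})\subseteq\rho_t$ for every $t\in S$ (for $t=e$ the image is empty; for $t\neq e$ the right-hand side is $A$). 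Thus your test relation fails to witness non\hyp{}preservation for such $f$, and the inclusion is not established as written. The repair is to pick a test relation all of whose components are nonempty: the paper uses the binary $S$\hyp{}relation with $\rho_{s_i}=\nabla=A^{2}$ and $\rho_t=\Delta$ for $t\neq s_i$, giving $f(\rho_{s_1},\dots,\rho_{s_n})=\rho_{s_i}=A^{2}\not\subseteq\Delta=\rho_e$ when $|A|\geq 2$; a unary choice such as $\rho_e=\{a\}$ and $\rho_s=A$ otherwise works just as well. With that one substitution your proof matches the paper's.
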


\begin{proof}
  First we prove $\SJ_{A}=\SPol\SRel(A)$. It is easy to check that any
  (trivial) projection (see Remark~\ref{C2a}(B)) preserves every
  relation, i.e., $\SJ_{A}\subseteq\SPol\SRel(A)$. Conversely, let
  $f\in\SPol\SRel(A)$ with $\sgn(f)=(s_{1},\dots,s_{n})$. From
  Lemma~\ref{D2}\ref{D2a} we conclude that
  $\mathring{f}\preserves\sigma$ for each $\sigma\in\Rel(A)$. Because
  $\Pol\Rel(A)=J_{A}$ (this follows from well\hyp{}known results about
  clones: since $\Inv J_{A}=\Rel(A)$, see \cite[1.1.15]{PoeK79}, one
  can conclude $J_{A}=\Pol\Inv J_{A}=\Pol\Rel(A)$) we get
  $\mathring{f}\in J_{A}$ is a projection, i.e.,
  $f(x_{1},\dots,x_{n})=x_{i}$. If $s_{i}\ne e$ then $f$ does not
  preserve $\rho:=(\Delta,\dots,\Delta,\nabla,\Delta,\dots,\Delta)$
  (with $\nabla$ at the $i$-th place), a contradiction to the choice
  of $f$. Thus $s_{i}=e$ and $f\in \SJ_{A}$; consequently
  $\SPol\Rel(A)\subseteq \SJ_{A}$.

  Now we show $\SD_{A}=\SInv\SOp(A)$. Let $f\in \SOp(A)$ with
  $\sgn(f)=(s_{1},\dots,s_{n})$ and
  $\rho=(\rho_{s})_{s\in S}=(\delta^{m}_{\epsilon_{s}})_{s\in S}\in
  \SD_{A}$, in particular we have $\rho_{s_{i}t}\subseteq\rho_{t}$ for
  all $t \in S$ since $s_{i}t\in St$ (cf.\ Definition~\ref{C4a}) for
  $i\in\{1,\dots,n\}$. Thus
  $f(\rho_{s_{1}t},\dots,\rho_{s_{n}t})\subseteq
  f(\rho_{t},\dots,\rho_{t})\subseteq\rho_{t}$ (note for the last
  inclusion that the diagonal $\rho_{t}\in D_{A}$ is preserved by
  every function, thus also by $\mathring f$), which shows that
  $f\Spreserves\rho$. Thus $\SD_{A}\subseteq\SInv\SOp(A)$.

  Conversely, let $\rho\in\SInv\SOp(A)$. Note that
  $\{ \, \mathring f \mid f \in \SOp(A), \; \Sgn(f) = \{e\} \, \} =
  \Op(A)$.
 From Lemma~\ref{D2}\ref{D2b} we conclude that
  $\mathring{f} \preserves \{ \, \rho_{s} \mid s \in S \, \}$ for
  every $\mathring{f} \in \Op(A)$. Thus
  $\{ \, \rho_{s} \mid s \in S \, \} \subseteq \Inv\Op(A) = D_{A}$
  (the latter equality is well known, see e.g.,
  \cite[1.1.15]{PoeK79}). Furthermore, let $Ss\subseteq St$, i.e.,
  there exists $s_{1}\in S$ such that $s=s_{1}t$. Consequently, from
  $\id^{s_{1}}\Spreserves\rho$ we get
  $\rho_{s}=\id^{s_{1}}(\rho_{s})=\id^{s_{1}}(\rho_{s_{1}t})\subseteq\rho_{t}$,
  which proves $\rho\in\SD_{A}$ (because the compatibility condition
  in Definition~\ref{C4a} is fulfilled). Thus we also have
  $\SInv\SOp(A)\subseteq\SD_{A}$.
\end{proof}

The operators $\SPol$ and $\SInv$ of the Galois connection produce
$S$\hyp{}preclones and $S$\hyp{}relational clones, as shown by the
following lemma.
 The main question, if \textsl{every}
$S$\hyp{}preclone and $S$\hyp{}relational clone can be ``produced'' in
this way, shall be answered in the next section (Theorems~\ref{Thm-I}
and \ref{Thm-II}).

\begin{lemma}\label{D3A}
  Let $F \subseteq \SOp(A)$ and $Q \subseteq \SRel(A)$. Then $\SPol Q$
  is an $S$\hyp{}preclone and $\SInv F$ is an $S$\hyp{}relational
  clone. Moreover, $\SPol Q = \SPol \SRg{Q}$ and
  $\SInv F = \SInv \SSg{F}$.
\end{lemma}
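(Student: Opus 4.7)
The plan is to verify, one at a time, that each of the defining closure operations of an $S$-preclone leaves $\SPol Q$ invariant, and dually that each of the defining closure operations of an $S$-relational clone leaves $\SInv F$ invariant. The trivial projections are clearly in $\SPol Q$ and the $S$-diagonals lie in $\SInv F$, both by Proposition~\ref{D1a} (or by direct check: $\id_{A}$ acts trivially and every operation preserves $\Delta_{A}$, so $\delta^{S} \in \SInv F$).

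For $\SPol Q$ the routine cases are $\zeta$, $\tau$, $\nabla^{s}$, and $\Delta$; in each instance one rewrites the required inclusion $(\star f)(\rho_{t_{1}s}, \dots, \rho_{t_{k}s}) \subseteq \rho_{s}$ in terms of the original $f$ using the definitions in \ref{C2}. The one step that deserves care is composition~\ref{C2}\ref{C2-6}, because the signum of $f\circ g$ involves products $s'_{j}s_{1}$. Given $f\Spreserves\rho$ and $g\Spreserves\rho$, the key calculation for any $s\in S$ is to apply $g\Spreserves\rho$ \emph{with $s$ replaced by $s_{1}s$}, yielding $g(\rho_{s'_{1}s_{1}s},\dots,\rho_{s'_{m}s_{1}s})\subseteq \rho_{s_{1}s}$, and then to apply $f\Spreserves\rho$ with the same $s$, giving $f(\rho_{s_{1}s},\rho_{s_{2}s},\dots,\rho_{s_{n}s})\subseteq\rho_{s}$. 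Chaining these two inclusions matches exactly the signum prescribed in \ref{C2}\ref{C2-6}. This is the central and only genuinely ``$S$-flavoured'' computation on the operational side.

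For $\SInv F$ the componentwise operations $\zeta$, $\tau$, $\pr$, $\times$, $\wedge$ reduce immediately to the classical argument applied coordinate by coordinate. The substantive steps are $\mu_{v}$ and $\sqcap^{v}$, and I would actually prove them both at once by verifying closure under the more general $\cM$-self-intersection of Definition~\ref{C4-Self} (this is also what Remark~\ref{C5}(C) promises, and subsumes $\mu_{v}=\sqcap_{\cT^{v}}$ and $\sqcap^{v}=\sqcap_{\cM^{v}}$ by Remark~\ref{LM}). So, assume $f\Spreserves\rho$ with $\sgn(f)=(s_{1},\dots,s_{n})$ and let $\cM=(M_{s})_{s\in S}$ satisfy condition (*). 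For fixed $s\in S$, one must show $f\bigl((\sqcap_{\cM}\rho)_{s_{1}s},\dots,(\sqcap_{\cM}\rho)_{s_{n}s}\bigr)\subseteq (\sqcap_{\cM}\rho)_{s}$, that is, $\subseteq \rho_{s'}$ for every $s'\in M_{s}$ (the case $M_{s}=\emptyset$ is trivial). For such an $s'$, condition (*) gives $s_{i}s'\in s_{i}M_{s}\subseteq M_{s_{i}s}$, so each $(\sqcap_{\cM}\rho)_{s_{i}s}$ is contained in $\rho_{s_{i}s'}$; applying $f\Spreserves\rho$ with ``$s$'' replaced by $s'$ then closes the inclusion into $\rho_{s'}$. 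I expect this to be the main obstacle in the whole proof, since it is the one place where the monoid action on the index set really interacts with preservation.

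Finally, the ``moreover'' clauses are a formal consequence of the Galois machinery. Since $Q\subseteq\SRg Q$, the inclusion $\SPol\SRg Q\subseteq\SPol Q$ is immediate. Conversely, for any $f\in\SPol Q$ the set $\SInv\{f\}$ is an $S$-relational clone by the first part, and contains $Q$, hence contains $\SRg Q$; thus $f$ preserves every $\rho\in\SRg Q$, giving $\SPol Q\subseteq\SPol\SRg Q$. The identity $\SInv F=\SInv\SSg F$ follows by the symmetric argument, using that $\SPol\{\rho\}$ is an $S$-preclone.
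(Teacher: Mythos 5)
Your proposal is correct and follows essentially the same route as the paper: a case-by-case verification of the preclone operations with the composition step handled by applying $g\Spreserves\rho$ at $s_{1}s$ and then $f\Spreserves\rho$ at $s$, the relational side reduced to the general $\cM$-self-intersection exactly as in the paper, and the ``moreover'' clauses derived from the Galois machinery (your pointwise use of $\SInv\{f\}$ is only a cosmetic variant of the paper's $\SPol Q=\SPol\SInv\SPol Q\subseteq\SPol\SRg{Q}$).
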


\begin{proof}
  We show first that $\SPol Q$ is an $S$\hyp{}preclone. Let
  $\rho = (\rho_s)_{s \in S}$ be an $S$\hyp{}relation in $Q$. For all
  $s \in S$, $\id_{A}(\rho_{es}) = \rho_{es} = \rho_s$; hence
  $\id_{A} \in \SPol Q$. Let $f, g \in \SPol Q$ with
  $\sgn(f) = (s_1, \dots, s_m) \in S^m$ and
  $\sgn(g) = (s'_1, \dots, s'_n) \in S^n$, and let $s \in S$. If
  $m = 1$, then we have $\zeta f = \tau f = f \in \SPol Q$; otherwise
  \begin{align*}
    & (\zeta f)(\rho_{s_n s}, \rho_{s_1 s}, \dots \rho_{s_m s}) = f(\rho_{s_1 s}, \dots \rho_{s_m s}) \subseteq \rho_s, \\
    & (\tau f)(\rho_{s_2 s}, \rho_{s_1 s}, \rho_{s_3 s}, \dots, \rho_{s_m s}) = f(\rho_{s_1 s}, \dots \rho_{s_m s}) \subseteq \rho_s. \\
    \intertext{For each $t \in S$,}
    & (\nabla^t f)(\rho_{t s}, \rho_{s_1 s}, \dots \rho_{s_m s}) = f(\rho_{s_1 s}, \dots \rho_{s_m s}) \subseteq \rho_s, \\
    \intertext{and if $s_1 = s_2 = t$ then}
    & (\Delta f)(\rho_{s_1 s}, \rho_{s_3 s}, \dots \rho_{s_m s}) \subseteq f(\rho_{s_1 s}, \dots \rho_{s_m s}) \subseteq \rho_s; \\
    \intertext{otherwise $\Delta f = f \in \SPol Q$.
    Finally,}
    & (f \circ g)(\rho_{s'_1 s_1 s}, \dots, \rho_{s'_n s_1 s}, \rho_{s_2 s}, \dots, \rho_{s_m s})
    \\ & \quad = f(g(\rho_{s'_1 s_1 s}, \dots, \rho_{s'_n s_1 s}), \rho_{s_2 s}, \dots, \rho_{s_m s})
    \\ & \quad \subseteq f(\rho_{s_1 s}, \rho_{s_2 s}, \dots, \rho_{s_m s})
         \subseteq \rho_s.
  \end{align*}
  This shows that $\SPol Q$ is an $S$\hyp{}preclone.

  Now we show that $\SInv F$ is an $S$\hyp{}relational clone. By
  Proposition~\ref{D1a} we have
  $\delta^{S}\in \SD_{A}\subseteq \SInv F$, thus condition \ref{C4-1}
  of Definition~\ref{C4} is satisfied. Let $\rho, \rho' \in \SInv F$,
  say $\rho$ is $m$-ary and $\rho'$ is $m'$-ary. If $m = 1$, then
  $\zeta \rho = \tau \rho = \pr \rho = \rho \in \SInv F$. If
  $m \neq m'$, then
  $\rho \wedge \rho' = (\emptyset^{(m)})_{s \in S} \in \SInv F$. In
  all other cases, it is straightforward to verify that for $f\in F$
  with $\sgn(f)=(s_{1},\dots,s_{n})$ and $\pi \in \{\zeta,\tau, \pr\}$
  we have
  \begin{align*}
    &
      f(\pi \rho_{s_1 s}, \dots, \pi \rho_{s_n s}) =
      \pi f(\rho_{s_1 s}, \dots, \rho_{s_n s})
      \subseteq \pi \rho_s,
    \\ &
         f(\rho_{s_1 s} \times \rho'_{s_1 s}, \dots, \rho_{s_n s}
         \times  \rho'_{s_n s})
    \\ & \quad
         \subseteq
         f(\rho_{s_1 s}, \dots, \rho_{s_n s}) \times f(\rho'_{s_1 s},
         \dots, \rho'_{s_n s}) 
         \subseteq
         \rho_s \times \rho'_s,
    \\ &
         f(\rho_{s_1 s} \land \rho'_{s_1 s}, \dots, \rho_{s_n s}
         \land  \rho'_{s_n s})
    \\ & \quad
         \subseteq
         f(\rho_{s_1 s}, \dots, \rho_{s_n s}) \cap f(\rho'_{s_1 s},
         \dots, \rho'_{s_n s}) 
         \subseteq
         \rho_s \land \rho'_s,
  \end{align*}
  so $f \Spreserves \pi \rho$, $f \Spreserves \rho \times\rho'$ and
  $f \Spreserves \rho \land\rho'$.

  For the remaining two operations (i.e., index translation and
  self\hyp{}intersection, see
  Definition~\ref{C4}\ref{C4-7},\ref{C4-8}) we use the more general
  operation $\cM$-self-intersection (see
  Defininition~\ref{C4-Self}\ref{C4-9}). Because of \ref{LM} this will
  prove the result for the operations \ref{C4-7} and \ref{C4-8}, too.

  Thus let $f\in F$ with $\sgn(f)=(s_{1},\dots,s_{n})$ and
  $\rho\in\SInv F$, i.e., $f\Spreserves\rho$. Furthermore, let
  $\cM=(M_{s})_{s\in S}$ satisfy the condition (*) in
  Definition~\ref{C4}\ref{C4-9}. For shorter notation put
  $\rho':=\sqcap_{\cM}\rho$. We have to show
  $ f(\rho'_{s_{1}s},\dots,\rho'_{s_{n}s})\subseteq\rho'_{s}$ for
  every $s\in S$. Since $s_{i}s'\in s_{i}M_{s}\subseteq M_{s_{i}s}$
  for each $s'\in M_{s}$, by the definition of $\rho'_{s_{i}s}$ we
  have $\rho'_{s_{i}s}\subseteq \rho_{s_{i}s'}$ for all $s'\in M_{s}$
  ($i\in\{1,\dots,n\}$). Consequently,
  \[
    f(\rho'_{s_{1}s},\dots,\rho'_{s_{n}s})\subseteq
    f(\rho_{s_{1}s'},\dots,\rho_{s_{n}s'})\subseteq \rho_{s'}
  \]
  for each $s'\in M_{s}$, thus
  $ f(\rho'_{s_{1}s},\dots,\rho'_{s_{n}s})$ is contained in
  $\rho'_{s}$ being the intersection of all such
  $\rho_{s'}$. Consequently, $f\Spreserves\sqcap_{\cM}\rho$.

  This shows that $\SInv F$ is an $S$\hyp{}relational clone.

  Finally, the last statement of the Lemma follows from the
  above. Indeed, we clearly have $\SPol \SRg{Q} \subseteq \SPol Q$ and
  $Q \subseteq \SInv \SPol Q$ (by the general properties of a Galois
  connection, here $\SPol$--$\SInv$). The latter implies
  $\SRg{Q} \subseteq \SInv \SPol Q$ since $\SInv \SPol Q$ is an
  $S$\hyp{}relational clone (by what we have shown above);
  consequently
  $\SPol Q = \SPol \SInv \SPol Q \subseteq \SPol \SRg{Q}$, which
  proves the equality $\SPol Q = \SPol \SRg{Q}$. The equality
  $\SInv F = \SInv \SSg{F}$ follows analogously from the fact that
  $\SPol \SInv F$ is an $S$\hyp{}preclone.
\end{proof}

A characterization of the (usual) clones generated by
$S$\hyp{}operations, in particular by $S$\hyp{}preclones $F$, is also
possible.

\begin{proposition}\label{D4}
  Let $F \subseteq \SOp(A)$. Then the clone
  $\Sg{\mathring{F}} \subseteq \Op(A)$ generated by $\mathring{F}$
  \textup{(}i.e., all functions of $F$ ignoring the signum of the
  operations\textup{)} can be characterized as follows:
  \[
    \Sg{\mathring{F}} = \Pol \{ \, \sigma \mid (\sigma)_{s \in S} \in
    \SInv F \, \}.
  \]
\end{proposition}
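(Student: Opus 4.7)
The plan is to reduce the claim to the classical Pol--Inv theorem for unsigned operations on a finite set, namely $\Pol \Inv G = \Sg{G}$ for $G \subseteq \Op(A)$. To exploit this, I would first show that the set
\[
Q_{F} := \{ \, \sigma \in \Rel(A) \mid (\sigma)_{s \in S} \in \SInv F \, \}
\]
of relations appearing on the right-hand side is nothing but the classical invariant set $\Inv \mathring{F}$.

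The identification $Q_{F} = \Inv \mathring{F}$ is immediate from Lemma~\ref{D2}\ref{D2a}: for each $f \in F$ and each $\sigma \in \Rel(A)$, one has $f \Spreserves (\sigma)_{s \in S}$ if and only if $\mathring{f} \preserves \sigma$. Universally quantifying over $F$ gives $\sigma \in Q_{F} \iff \sigma \in \Inv \mathring{F}$.

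With this identification in hand, the right-hand side of the proposition equals $\Pol \Inv \mathring{F}$. Since $A$ is finite and $\mathring{F} \subseteq \Op(A)$, the classical Bodnarchuk--Kalu\v{z}nin--Kotov--Romov theorem (cited right before Definition~\ref{D1} as $\Sg{G} = \Pol \Inv G$) yields $\Pol \Inv \mathring{F} = \Sg{\mathring{F}}$, which is precisely the desired equality.

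Thus the proof is essentially a one-line reduction, and no real obstacle arises: the whole content of the statement is the translation, via the constant-family embedding $\sigma \mapsto (\sigma)_{s \in S}$, between $S$\hyp{}preservation and classical preservation. The only thing worth checking carefully is that the constant families really do suffice to recover $\Inv \mathring{F}$ (rather than, say, requiring more general $S$\hyp{}relations), but this is exactly what Lemma~\ref{D2}\ref{D2a} provides.
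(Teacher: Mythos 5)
Your proof is correct and follows essentially the same route as the paper: identify $\{\,\sigma \mid (\sigma)_{s\in S} \in \SInv F\,\}$ with $\Inv\mathring{F}$ via Lemma~\ref{D2}\ref{D2a} and then invoke the classical result $\Sg{\mathring{F}} = \Pol\Inv\mathring{F}$ for finite $A$.
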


\begin{proof}
  It is well known that $\Sg{\mathring{F}} = \Pol \Inv \mathring{F}$
  (\cite{BodKKR69a}, cf.\ \cite[Folgerung 1.2.4]{PoeK79}). By
  Lemma~\ref{D2}\ref{D2a} we have
  $\sigma \in \Inv \mathring{F} \iff (\sigma)_{s \in S} \in \SInv F$,
  which finishes the proof.
\end{proof}

\begin{definition}\label{D5}
  Let $F \subseteq \SOp(A)$ and $\rho \in \SRela[m](A)$. Let
  \[
    \Gamma_{F}(\rho) := \bigcap \{ \, \sigma \in \SRela[m](A) \mid
    \rho \subseteq \sigma \in \SInv F \, \},
  \]
  where $\rho \subseteq \sigma$ means $\rho_{s} \subseteq \sigma_{s}$
  for each $s \in S$. Thus $\Gamma_{F}(\rho)$ is the least
  $S$\hyp{}relation which contains $\rho$ and is invariant for $F$
  (note that $\SInv F$ is closed under intersections, cf.\
  Definition~\ref{C4}). Moreover, we have
  $\Gamma_{F}(\rho) = \Gamma_{\SSg{F}}(\rho)$ (by definition, since
  $\SInv F = \SInv \SSg{F}$; see Lemma~\ref{D3A}).
\end{definition}

\begin{lemma}[{cf.\ \cite[1.1.20]{PoeK79}}]\label{LemGammaF}
  For any $F\subseteq\SOp(A)$ and $\rho \in \SRel(A)$,
  \begin{gather*}
    \zeta \Gamma_F(\rho) = \Gamma_F(\zeta \rho),
 \qquad
 \tau
    \Gamma_F(\rho) = \Gamma_F(\tau \rho),
    \\
    \pr_{z_1, \dots, z_m}(\Gamma_F(\rho)) = \Gamma_F(\pr_{z_1, \dots,
      z_m}(\rho)).
  \end{gather*}
\end{lemma}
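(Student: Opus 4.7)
The plan is to prove each of the three equalities by establishing the two inclusions separately. In every case one direction is essentially free: by Lemma~\ref{D3A}, $\SInv F$ is an $S$\hyp{}relational clone, hence closed under $\zeta$, $\tau$, $\pr$, $\times$, $\wedge$, and therefore also under the derived projection operator $\pr_{z_{1},\dots,z_{m}}$ of Remark~\ref{C5}(B). For any of the three operators $\pi\in\{\zeta,\tau,\pr_{z_{1},\dots,z_{m}}\}$, the componentwise action of $\pi$ is monotone with respect to $\subseteq$, so $\pi\Gamma_{F}(\rho)$ lies in $\SInv F$ and contains $\pi\rho$. Minimality of $\Gamma_{F}(\pi\rho)$ then yields $\Gamma_{F}(\pi\rho)\subseteq\pi\Gamma_{F}(\rho)$.

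For the reverse inclusion when $\pi\in\{\zeta,\tau\}$, I exploit the fact that on $m$\hyp{}ary $S$\hyp{}relations these operators are bijections whose inverses are again built from $\zeta,\tau$ alone ($\zeta^{-1}=\zeta^{m-1}$, $\tau^{-1}=\tau$). This produces the equivalence $\sigma\in\SInv F\iff\pi\sigma\in\SInv F$, and $\pi$ commutes with arbitrary componentwise intersections. Substituting $\sigma=\pi^{-1}\sigma'$ in the defining intersection
\[
  \Gamma_{F}(\pi\rho)=\bigcap\{\,\sigma'\in\SInv F\mid\sigma'\supseteq\pi\rho\,\}
\]
therefore rewrites this intersection as $\pi\bigcap\{\,\sigma\in\SInv F\mid\sigma\supseteq\rho\,\}=\pi\Gamma_{F}(\rho)$, finishing these two cases.

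The main obstacle is the reverse inclusion for $\pi=\pr_{z_{1},\dots,z_{m}}$, since projection is neither injective nor does it distribute over intersection. I plan to handle this by a \emph{lifting} construction. Suppose $\rho\in\SRela[m'](A)$ and fix any $\tau\in\SInv F$ with $\pr_{z_{1},\dots,z_{m}}\rho\subseteq\tau$. Define $\tau^{*}\in\SRela[m'](A)$ componentwise by
\[
  \tau^{*}_{s}:=\{\,(a_{1},\dots,a_{m'})\in A^{m'}\mid(a_{z_{1}},\dots,a_{z_{m}})\in\tau_{s}\,\},\qquad s\in S.
\]
A direct componentwise check, using the identity $\pr_{z_{1},\dots,z_{m}}f(r^{(1)},\dots,r^{(n)})=f(\pr_{z_{1},\dots,z_{m}}r^{(1)},\dots,\pr_{z_{1},\dots,z_{m}}r^{(n)})$ for the componentwise action of any $f\in F$, together with $f\Spreserves\tau$, shows $\tau^{*}\in\SInv F$. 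By construction $\rho\subseteq\tau^{*}$, since $(a_{1},\dots,a_{m'})\in\rho_{s}$ forces $(a_{z_{1}},\dots,a_{z_{m}})\in(\pr_{z_{1},\dots,z_{m}}\rho)_{s}\subseteq\tau_{s}$. Minimality of $\Gamma_{F}(\rho)$ then gives $\Gamma_{F}(\rho)\subseteq\tau^{*}$, and applying $\pr_{z_{1},\dots,z_{m}}$ yields $\pr_{z_{1},\dots,z_{m}}\Gamma_{F}(\rho)\subseteq\pr_{z_{1},\dots,z_{m}}\tau^{*}\subseteq\tau$. Taking $\tau:=\Gamma_{F}(\pr_{z_{1},\dots,z_{m}}\rho)$ completes the proof.
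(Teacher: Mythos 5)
Your proof is correct and follows essentially the same route as the paper: one inclusion in each case comes from closure of $\SInv F$ under the relevant operation together with minimality of $\Gamma_F$, and your lifted relation $\tau^{*}$ is precisely the paper's relation ``obtained by introduction of fictitious rows and identification of equal rows'' from a member of $\SInv F$ containing $\pr_{z_1,\dots,z_m}(\rho)$. (A purely cosmetic remark: reusing the symbol $\tau$ for an $S$-relation clashes with the transposition operator $\tau$ appearing in the statement of the lemma.)
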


\begin{proof}
  The claims about $\zeta$ and $\tau$ are easy to verify. For the
  claim about $\pr_{z_1, \dots, z_m}$, observe that
  $\pr_{z_1, \dots, z_m}(\rho) \subseteq \pr_{z_1, \dots,
    z_m}(\Gamma_F(\rho)) \in \SInv F$ and that every $S$\hyp{}relation
  $\sigma \in \SInv F$ that contains $\pr_{z_1, \dots, z_m}(\rho)$ can
  be turned into a relation $\sigma' \in \SInv F$ with
  $\rho \subseteq \sigma'$ by introduction of fictitious rows and
  identification of equal rows.
\end{proof}

\begin{proposition}[Characterization of $\boldsymbol{\Gamma_F(\rho)}$]\label{D5a}
  \leavevmode
  \begin{enumerate}[label=\textup{(\roman*)}]
  \item \label{D5a-i}

    Let $F\subseteq\SOp(A)$ and $\rho\in\SRel(A)$. For $i\in\N$ we
    define
    \begin{align*}
      \rho^{(0)}&:=\rho\\
      \rho^{(i+1)}&:=(\rho^{(i+1)}_{s})_{s\in S}, \text{ where}\\
      \rho^{(i+1)}_{s}&:=\rho^{(i)}_{s}\cup \{ \, f(r_{1},\dots,r_{n}) \mid
                        \begin{array}[t]{@{}l@{}}
                          f \in F^{(n)}, \, n \in \N_{+},  \\
                          r_{1}\in\rho^{(i)}_{s_{1}s}, \dots, r_{n}\in\rho^{(i)}_{s_{n}s}, \\
                          \text{\textup{where }}(s_{1},\dots,s_{n}) := \sgn(f) \, \}.
                        \end{array}
    \end{align*}
    Then we have $\Gamma_{F}(\rho)=\bigcup_{i=0}^{\infty}\rho^{(i)}$.

  \item\label{D5a-ii} Let $r\in\Gamma_{F}(\rho)_{s}$ for some
    $s\in S$. Then there exist an $S$\hyp{}operation $f\in\SSg{F}$
    with $\sgn(f)=(s_{1},\dots,s_{q})$ \textup{(}for some $q\in\N_{+}$
    and $s_{1},\dots,s_{q}\in S$\textup{)} and $r_{j}\in\rho_{s_{j}s}$
    \textup{(}$j\in\{1,\dots,q\}$\textup{)} such that
    $r=f(r_{1},\dots,r_{q})$.

  \item\label{D5a-iii} Let $\rho\in\SRel(A)$ and
    $\blambda_{\rho}=(s_{1},\dots,s_{n})$, $\rho=(r_{1},\dots,r_{n})$
    \textup{(}cf.\ Definition~\ref{C3}\textup{)}. Then
    \[
      (\Gamma_{F}(\rho))_{e} = \{ \, f(r_{1},\dots,r_{n}) \mid f \in
      \SSg{F}, \sgn(f) = \blambda_{\rho} \, \}.
    \]
  \end{enumerate}
\end{proposition}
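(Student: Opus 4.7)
The plan is to treat the three parts in order, using part (i) as the workhorse for (ii) and then using (ii) to extract (iii). For (i), I would set $\rho^{\ast} := \bigcup_{i \geq 0} \rho^{(i)}$ (componentwise in each $s \in S$) and prove both inclusions. The inclusion $\rho^{\ast} \subseteq \Gamma_F(\rho)$ is an induction on $i$: $\rho^{(0)} = \rho \subseteq \Gamma_F(\rho)$, and since $\Gamma_F(\rho) \in \SInv F$, the clause defining $\rho^{(i+1)}$ only adjoins elements $f(r_1,\dots,r_n)$ that already lie in $\Gamma_F(\rho)_s$ by $S$-preservation. For the reverse, I would show $\rho^{\ast} \in \SInv F$ directly: any $f \in F^{(n)}$ with $\sgn(f) = (s_1,\dots,s_n)$ applied to tuples from $\rho^{\ast}_{s_j s}$ can be executed at a common finite stage $i := \max_j i_j$, and the definition of $\rho^{(i+1)}$ places the output into $\rho^{\ast}_s$. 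Since $\rho \subseteq \rho^{\ast} \in \SInv F$, minimality of $\Gamma_F(\rho)$ forces $\Gamma_F(\rho) \subseteq \rho^{\ast}$.

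For (ii), I would induct on the least $i$ with $r \in \rho^{(i)}_s$. The base case $i = 0$ is handled by $f = \id_A \in \SSg{F}$ of signum $(e)$, with $q = 1$ and $r_1 = r \in \rho_{es} = \rho_s$. For the step, write $r = g(r'_1, \dots, r'_n)$ with $g \in F^{(n)}$, $\sgn(g) = (t_1, \dots, t_n)$, and $r'_k \in \rho^{(i)}_{t_k s}$; applying the inductive hypothesis to each $r'_k$ (with the ``outer element'' $t_k s \in S$ in place of $s$) produces $f_k \in \SSg{F}$ of signum $(u^{(k)}_1, \dots, u^{(k)}_{q_k})$ and $r_{k,j} \in \rho_{u^{(k)}_j (t_k s)}$ satisfying $r'_k = f_k(r_{k,1}, \dots, r_{k,q_k})$. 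The iterated composition $h := g(f_1, \dots, f_n)$ belongs to $\SSg{F}$ by Remark~\ref{C2a}(A), and by Definition~\ref{C2}\ref{C2-6} applied at each substitution site its signum has generic entry $u^{(k)}_j t_k$. Associativity $u^{(k)}_j (t_k s) = (u^{(k)}_j t_k) s$ in $S$ then matches $r_{k,j} \in \rho_{(u^{(k)}_j t_k) s}$ with the required pattern, and $h$ applied to the concatenation of all $r_{k,j}$ manifestly returns $r$.

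For (iii), the inclusion $\supseteq$ is immediate: any $f \in \SSg{F}$ of signum $\blambda_{\rho} = (s_1,\dots,s_n)$ $S$-preserves $\Gamma_F(\rho)$, and $r_i \in \rho_{s_i} \subseteq \Gamma_F(\rho)_{s_i}$ gives $f(r_1,\dots,r_n) \in \Gamma_F(\rho)_e$. For $\subseteq$, I would apply (ii) with $s = e$ to write $r = f(r'_1,\dots,r'_q)$ with $\sgn(f) = (t_1,\dots,t_q)$ and $r'_j \in \rho_{t_j}$. Since the listing $(r_1,\dots,r_n)$ exhausts all parts $\rho_s$ with $r_i \in \rho_{s_i}$, there is a map $\phi \colon \{1,\dots,q\} \to \{1,\dots,n\}$ with $r'_j = r_{\phi(j)}$ and $s_{\phi(j)} = t_j$. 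Using the derived operations sanctioned by Remark~\ref{C2a}(A)---arbitrary permutation of arguments, fictitious arguments of prescribed signum, and identification of equally signed arguments (the equality $s_{\phi(j)} = t_j$ is exactly what licenses these identifications)---I would reshape $f$ into some $\tilde f \in \SSg{F}$ of signum $\blambda_{\rho}$ satisfying $\tilde f(x_1,\dots,x_n) = f(x_{\phi(1)},\dots,x_{\phi(q)})$, whence $\tilde f(r_1,\dots,r_n) = r$. The principal obstacle throughout is the signum bookkeeping in (ii): one must verify that the monoid products arising from iterated composition align exactly with the prescribed indices via associativity of $S$; parts (i) and the reshaping in (iii) are then largely a matter of chasing definitions.
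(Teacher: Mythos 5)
Your proposal is correct and follows essentially the same route as the paper: part (i) via the two inclusions (closure of the union under $F$ at a common finite stage, plus induction showing each $\rho^{(i)}\subseteq\Gamma_F(\rho)$), part (ii) by induction on the stage with the composed operation $h=g(f_1,\dots,f_n)$ and the associativity bookkeeping $u^{(k)}_j(t_ks)=(u^{(k)}_jt_k)s$, and part (iii) by reshaping the operation from (ii) through identification of equally signed arguments, fictitious arguments, and permutation. The only differences are cosmetic: you make the easy $\supseteq$ inclusion in (iii) explicit (the paper leaves it implicit) and you package the reshaping via a substitution map $\phi$ rather than the paper's "make the repeated columns distinct first" phrasing.
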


Remark: Note that the union in Proposition~\ref{D5a}\ref{D5a-i} is in
fact a finite union because the increasing sequence
$\rho^{(0)}\subseteq\rho^{(1)}\subseteq\dots\subseteq\rho^{(i)}
\subseteq\dots\subseteq(A^{m})_{s\in S}$ ($m=\ar(\rho)$) must
stabilize after a finite number of steps (since $A$ and $S$ are
finite).
  
\begin{proof}
 \ref{D5a-i}:
 Let
  $\gamma:=\bigcup_{i=0}^{\infty}\rho^{(i)}$. At first we show
  $\gamma\in\SInv F$ (this will imply
  $\Gamma_{F}(\rho)\subseteq\gamma$ by definition of
  $\Gamma_{F}(\rho)$ since $\rho\subseteq\gamma$).

  Indeed, let $f\in F$, $\sgn(f)=(s_{1},\dots,s_{n})$ and
  $r_{j}\in\gamma_{s_{j}s}$ for some $s\in S$, $j\in\{1,\dots,n\}$. We
  have to show $f(r_{1},\dots,r_{n})\in\gamma_{s}$. Since $\gamma$ is
  the union of the increasing sequence $(\rho^{(i)})_{i\in\N}$, there
  must exist $i\in\N$ such that $r_{j}\in\rho^{(i)}_{s_{j}s}$ for all
  $j\in\{1,\dots,n\}$. Consequently, we have
  $f(r_{1},\dots,r_{n})\in\rho^{(i+1)}_{s}\subseteq \gamma_{s}$ by
  definition of $\rho^{(i+1)}_{s}$.

  It remains to show $\gamma\subseteq\Gamma_{F}(\rho)$. We show
  $\rho^{(i)}\subseteq \Gamma_{F}(\rho)$ for all $i\in\N$ by induction
  on $i$. For $i=0$, $\rho^{(0)}=\rho\subseteq\Gamma_{F}(\rho)$ is
  clear. Assume $\rho^{(i)}\subseteq\Gamma_{F}(\rho)$ for some
  $i\in\N$. Then, for $f\in F^{(n)}$ with
  $\sgn(f)=(s_{1},\dots,s_{n})$, $s\in S$,
  $r_{1}\in\rho^{(i)}_{s_{1}s}\subseteq\Gamma_{F}(\rho)_{s_{1}s},\dots,
  r_{n}\in\rho^{(i)}_{s_{n}s}\subseteq\Gamma_{F}(\rho)_{s_{n}s}$, we
  get $f(r_{1},\dots,r_{n})\in\Gamma_{F}(\rho)_{s}$ (because
  $\Gamma_{F}(\rho)$ is $S$\hyp{}invariant), which implies
  $\rho^{(i+1)}_{s}\subseteq\Gamma_{F}(\rho)_{s}$ according to the
  definition of $\rho^{(i+1)}_{s}$. Thus we also have
  $\gamma\subseteq\Gamma_{F}(\rho)$ and therefore, as shown above,
  equality.

  \ref{D5a-ii}: According to \ref{D5a-i} it is enough to show the
  claim for $r\in\rho^{(i)}_{s}$ for each $i\in\N$. This we shall do
  by induction on $i$. For $i=0$ and $r\in\rho^{(0)}_{s}=\rho_{s}$ we
  obviously have $\id(r)=r$, i.e., we can take $q=1$,
  $f=\id\in\SSg{F}$, $r_{1}=r$, $s_{1}=e$.

  Assume that the claim holds for all elements in $\rho^{(i)}_{s}$ and
  all $s\in S$. Let $r\in\rho^{(i+1)}_{s}$. Then (according to the
  definition of $\rho^{(i+1)}$) there exist $f\in F^{(n)}$ with
  $\sgn(f)=(s_{1},\dots,s_{n})$ ($n\in\N_{+}$) and
  $b_{1}\in\rho^{(i)}_{s_{1}s},\dots, b_{n}\in\rho^{(i)}_{s_{n}s}$
  such that $r=f(b_{1},\dots,b_{n})$. By induction hypothesis each
  $b_{j}\in\rho^{(i)}_{s_{j}s}$ can be represented as
  $b_{j}=f_{j}(r_{j1},\dots,r_{jq_{j}})$ with
  $\sgn(f_{j})=(s_{j1},\dots,s_{jq_{j}})$ ($j\in\{1,\dots,n\}$) for
  suitable $r_{jk}\in\rho_{s_{jk}s_{j}s}$
  ($k\in\{1,\dots,q_{j}\}$). Consequently, for
  \begin{multline*}
    h(x_{11},\dots,x_{1q_{1}},\dots,x_{n1},\dots,x_{nq_{n}}) := \\
    f(f_{1}(x_{11},\dots,x_{1q_{1}}),\dots,f_{n}(x_{n1},\dots,x_{nq_{n}}))
  \end{multline*}
  we have
  \begin{align*}
    \sgn(h)&=(s_{11}s_{1},\dots,s_{1q_{1}}s_{1},\dots,s_{n1}s_{n},\dots,s_{nq_{n}}s_{n})
             \quad\text{and} \\
    r&=h(r_{11},\dots,r_{1q_{1}},\dots,r_{n1},\dots,r_{nq_{n}}),
       \quad\text{where $r_{jk} \in \rho_{s_{jk}s_{j}s}$,}
  \end{align*}
  which shows the claim for $\rho^{(i+1)}_{s}$. By induction,
  \ref{D5a-ii} is proved.

  \ref{D5a-iii}: Let $r\in\Gamma_{F}(\rho)_{e}$. According to
  \ref{D5a-ii} (for $s=e$) there exist $f\in\SSg{F}$ with
  $\sgn(f)=(t_{1},\dots,t_{q})$ and $r_{k_{i}}\in\rho_{t_{i}}$,
  $k_{i}\in\{1,\dots,n\}$, $i\in\{1,\dots,q\}$ such that
  $r=f(r_{k_{1}},\dots,r_{k_{q}})$. We can assume that all the
  $r_{k_{i}}$'s on arguments with the same signum are different
  (otherwise the corresponding arguments can be identified according
  to Definition~\ref{C2}\ref{C2-5}). Since $r_{i}\in\rho_{s_{i}}$ (by
  definition), we have $r_{k_{i}}\in \rho_{s_{k_{i}}}$ with
  $s_{k_{i}}=t_{i}$ and therefore
  $\sgn(f)=(t_{1},\dots,t_{q})=(s_{k_{1}},\dots,s_{k_{q}})$.

  For each $r_{j}$ which does not appear among the $r_{k_{i}}$'s, more
  precisely, for each
  $r_{j} \in \rho_{s_{j}} \setminus \{ \, r_{k_{i}} \mid s_{k_{i}} =
  s_{j},\, i \in \{1,\dots,q\} \, \}$
 ($j\in\{1,\dots,n\}$) we add a
  fictitious argument with signum $s_{j}$ to $f$ (according to
  Definition~\ref{C2}\ref{C2-4}). Thus we obtain an $S$\hyp{}operation
  $f'\in \SSg{F}$, the signum of which contains exactly \textsl{all}
  $s_{1},\dots,s_{n}$. With a suitable permutation of the arguments of
  $f'$ (according to Definition~\ref{C2}\ref{C2-2}, \ref{C2-3}, cf.\
  Remark~\ref{C2a}(A)) we finally get an $S$\hyp{}operation with
  $\sgn(f'')=(s_{1},\dots,s_{n})=\blambda_{\rho}$ such that
  $f''(r_{1}, \dots, r_{n})=f(r_{k_{1}},\dots,r_{k_{q}})=r$.
\end{proof}

Proposition~\ref{D6X} below shows that the $S$\hyp{}operations in an
$S$\hyp{}preclone generated by $F$ can be characterized by the
preservation of special $S$\hyp{}relations
$\Gamma_{F}(\chi^{\lambda})$ where the $\chi^{\lambda}$ are defined as
follows.

\begin{definition}\label{D6Xa}
  For a signum $\lambda = (s_{1}, \dots, s_{n}) \in S^{n}$, let
  $\chi^{\lambda} \in \SRela[k^{n}](A)$ be defined by
  \[
    \chi^{\lambda}_{s} :=
 \{ \, \kappa_{i} \mid s_{i} = s,\, i \in
    \{1, \dots, n \} \, \}
 \quad \text{for } s \in S.
  \]

  In the above, for $k := \lvert A \rvert$ and fixed $n\in\N_{+}$, the
  tuples $\kappa_{i}$ are defined as in the ``classical'' case
  (\cite[1.1.16]{PoeK79}): $(\kappa_{1}, \dots, \kappa_{n})$ is the
  $(k^{n}\times n)$-matrix with columns
  $\kappa_{1}, \dots, \kappa_{n}$ such that the rows are
  \underline{\textsl{all}} $n$-tuples from $A^{n}$ (may be ordered
  lexicographically). Thus we have
  $\kappa_{i}\in\chi^{\lambda}_{s_{i}}$. With the notation introduced
  in Definition~\ref{C3} we have $\lambda=\blambda_{\chi^{\lambda}}$.

\end{definition}

\begin{proposition}\label{D6X}
  Let $F \subseteq \SOp(A)$, let
  $\lambda = (s_{1}, \dots, s_{n}) \in S^{n}$ be a signum and let
  $g \in \SOpa[n](A)$ with $\sgn(g) = \lambda$. Then we have
  \[
 g \in \SSg{F} \iff g \Spreserves \Gamma_{F}(\chi^{\lambda}).
  \]
\end{proposition}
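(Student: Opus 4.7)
The plan is to prove the two directions separately, using Proposition~\ref{D5a}\ref{D5a-iii} for the nontrivial ($\Leftarrow$) direction and the Galois-theoretic fact $\SInv F = \SInv \SSg{F}$ from Lemma~\ref{D3A} for the easy ($\Rightarrow$) direction.

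For the forward direction, note that by Definition~\ref{D5}, $\Gamma_F(\chi^{\lambda}) \in \SInv F$. By Lemma~\ref{D3A}, $\SInv F = \SInv \SSg{F}$, so every element of $\SSg{F}$ $S$-preserves $\Gamma_F(\chi^{\lambda})$. In particular, if $g \in \SSg{F}$, then $g \Spreserves \Gamma_F(\chi^{\lambda})$.

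For the converse, the key observation is the special nature of $\chi^{\lambda}$: regarded as a matrix, its columns $\kappa_1, \dots, \kappa_n$ are such that the rows of $(\kappa_1, \dots, \kappa_n)$ enumerate \emph{all} $n$-tuples in $A^n$. Consequently, for any $n$-ary operation $h \in \Opa[n](A)$, the tuple $h(\kappa_1, \dots, \kappa_n) \in A^{k^n}$ is essentially the ``graph'' of $h$, so that $h(\kappa_1, \dots, \kappa_n) = h'(\kappa_1, \dots, \kappa_n)$ forces $h = h'$ as functions. This is the standard trick from the classical proof that $\Sg{F} = \Pol \Inv F$ on a finite base set.

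Assume $g \Spreserves \Gamma_F(\chi^{\lambda})$. Since $\kappa_i \in \chi^{\lambda}_{s_i} \subseteq \Gamma_F(\chi^{\lambda})_{s_i}$ for each $i \in \{1, \dots, n\}$, applying the preservation condition \eqref{D1i} with $s = e$ (so that $s_i s = s_i$) yields
\[
g(\kappa_1, \dots, \kappa_n) \in g\bigl(\Gamma_F(\chi^{\lambda})_{s_1}, \dots, \Gamma_F(\chi^{\lambda})_{s_n}\bigr) \subseteq \Gamma_F(\chi^{\lambda})_e.
\]
Since $\blambda_{\chi^{\lambda}} = \lambda$, Proposition~\ref{D5a}\ref{D5a-iii} guarantees the existence of $f \in \SSg{F}$ with $\sgn(f) = \lambda$ such that $g(\kappa_1, \dots, \kappa_n) = f(\kappa_1, \dots, \kappa_n)$. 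By the observation above, $\mathring{g} = \mathring{f}$ as functions on $A^n$, and since $\sgn(g) = \lambda = \sgn(f)$, we conclude $g = f \in \SSg{F}$.

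The main subtlety — and the only place that really uses structure specific to $S$-preclones rather than classical clones — is ensuring that the signum of the witness $f$ produced by Proposition~\ref{D5a}\ref{D5a-iii} matches $\sgn(g) = \lambda$ exactly; this is exactly what part \ref{D5a-iii} of that proposition delivers (as opposed to part \ref{D5a-ii}, which only provides some signum). The design of $\chi^{\lambda}$ (distributing the $\kappa_i$ into the $s_i$-component of the $S$-relation) is precisely what makes $\lambda = \blambda_{\chi^{\lambda}}$ and hence lets us invoke \ref{D5a-iii}.
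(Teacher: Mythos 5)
Your proof is correct and follows essentially the same route as the paper: the forward direction via $\Gamma_F(\chi^{\lambda}) \in \SInv F = \SInv\SSg{F}$, and the converse by evaluating the preservation condition at $s=e$ to place $g(\kappa_1,\dots,\kappa_n)$ in $\Gamma_F(\chi^{\lambda})_e$, then invoking Proposition~\ref{D5a}\ref{D5a-iii} and the fact that the rows of $(\kappa_1,\dots,\kappa_n)$ exhaust $A^n$ to identify $g$ with a member of $\SSg{F}$ of the same signum. Your closing remark about why part \ref{D5a-iii} rather than \ref{D5a-ii} is needed is an accurate reading of the argument's key point.
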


\begin{proof}
  The implication ``$\Longrightarrow$'' is clear since
  $\Gamma_{F}(\chi^{\lambda})$ is invariant (by definition) for every
  $S$\hyp{}operation in $\SSg{F}$ (cf.\ Lemma~\ref{D3A}).

  ``$\Longleftarrow$'': $g\Spreserves \Gamma_{F}(\chi^{\lambda})$
  implies that
  $g(\Gamma_{F}(\chi^{\lambda})_{s_{1}},\dots,\Gamma_{F}(\chi^{\lambda})_{s_{n}})\subseteq\Gamma_{F}(\chi^{\lambda})_{e}$
  (cf.~\ref{D1}\eqref{D1i}), in particular
  $g(\kappa_{1},\dots,\kappa_{n})\in\Gamma_{F}(\chi^{\lambda})_{e}$. According
  to Proposition~\ref{D5a}\ref{D5a-iii} (for $u=e$) we have
  \[
    \Gamma_{F}(\chi^{\lambda})_{e} =
 \{ \,
    f(\kappa_{1},\dots,\kappa_{n}) \mid f \in \SSg{F}, \, \sgn(f) =
    \lambda \, \}.
  \]
  Hence there exists some $f\in\SSg{F}^{(\lambda)}$ such that
  $g(\kappa_{1},\dots,\kappa_{n})=f(\kappa_{1},\dots,\kappa_{n})$. Thus
  $g$ and $f$ agree on each element of $A^{n}$ (the rows of
  $\chi^{\lambda}$), i.e., $g=f\in\SSg{F}$.
\end{proof}


\section{The Galois closures for the Galois connection
  $\boldsymbol{\SPol}$--$\boldsymbol{\SInv}$}\label{secE}

Now we are able to characterize the Galois closures of the Galois
connection $\SPol$--$\SInv$. Recall that throughout the paper we
assume that $A$ and $S$ \textbf{are finite}.

\begin{theorem}\label{Thm-I} Let $S$ be an arbitrary monoid. Then,
  for $F\subseteq\SOp(A)$, we have
  \[
    \SSg{F} =\SPol\SInv F,
  \]
  i.e., the Galois closure is the $S$\hyp{}preclone generated by $F$.
\end{theorem}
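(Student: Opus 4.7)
The plan is to prove the two inclusions $\SSg{F} \subseteq \SPol\SInv F$ and $\SPol\SInv F \subseteq \SSg{F}$ separately. The first inclusion is the routine half: by Lemma~\ref{D3A}, $\SPol\SInv F$ is an $S$\hyp{}preclone, and the definition of $\SInv F$ immediately gives $F \subseteq \SPol\SInv F$; hence the least $S$\hyp{}preclone containing $F$ is contained in $\SPol\SInv F$.

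For the nontrivial inclusion, the plan is to invoke the characterization provided by Proposition~\ref{D6X}. Let $g \in \SPol\SInv F$ with $\sgn(g) = \lambda = (s_1,\dots,s_n) \in S^n$. Consider the distinguished $S$\hyp{}relation $\chi^{\lambda}$ from Definition~\ref{D6Xa}, which has arity $k^n$ with $k = \lvert A \rvert$; here the hypothesis that $A$ is finite is essential, since it guarantees $\chi^{\lambda}$ is a finitary $S$\hyp{}relation. Therefore $\Gamma_F(\chi^\lambda)$, defined as the intersection of all $F$\hyp{}invariant $S$\hyp{}relations of the same arity containing $\chi^\lambda$, is a well-defined $S$\hyp{}relation of arity $k^n$.

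The key observation is that $\Gamma_F(\chi^\lambda) \in \SInv F$: this is because $\SInv F$ is closed under intersections (it is an $S$\hyp{}relational clone by Lemma~\ref{D3A}, and closure under intersection is part of Definition~\ref{C4}\ref{C4-6}). Consequently, the assumption $g \in \SPol\SInv F$ yields $g \Spreserves \Gamma_F(\chi^\lambda)$. Applying Proposition~\ref{D6X} directly concludes that $g \in \SSg{F}$, as required.

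The main conceptual obstacle has in fact been absorbed into Proposition~\ref{D6X} and the construction of $\Gamma_F(\chi^\lambda)$: the hard work of reconstructing an $S$\hyp{}operation of prescribed signum $\lambda$ from its action on the ``universal'' tuples $\kappa_1,\dots,\kappa_n$ (the rows of $\chi^\lambda$ listing all of $A^n$ lexicographically) was done in Proposition~\ref{D5a}\ref{D5a-iii}, using the closure of $\SSg{F}$ under adding fictitious arguments, permuting arguments, and identifying arguments of equal signum. Given these tools, the Galois closure theorem is essentially a direct application. The finiteness of both $A$ and $S$ is what ensures termination of the iterative construction of $\Gamma_F(\chi^\lambda)$ in Proposition~\ref{D5a}\ref{D5a-i}.
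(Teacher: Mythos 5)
Your proposal is correct and follows essentially the same route as the paper: the easy inclusion via Lemma~\ref{D3A} (that $\SPol\SInv F$ is an $S$\hyp{}preclone containing $F$), and the converse by noting $\Gamma_F(\chi^\lambda)\in\SInv F$, hence $g\Spreserves\Gamma_F(\chi^\lambda)$, and applying Proposition~\ref{D6X}. Your added remarks on where the finiteness of $A$ and $S$ enters are accurate and consistent with the paper's setup.
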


\begin{proof} Since $\SPol\SInv$ is a closure operator,
 we have
  the inclusion
 $\SSg{F} \subseteq \linebreak \SPol\SInv \SSg{F}=\SPol\SInv F$
  (the last equality follows from Lemma~\ref{D3A}). For the converse
  inclusion let $g\in\SPol\SInv F$ with $\sgn(g)=\lambda$. Then
  $g\in\SPol \Gamma_{F}(\chi^{\lambda})$ (since
  $\Gamma_{F}(\chi^{\lambda})$ is invariant by Definition \ref{D5})
  and we get $g\in \SSg{F}$ by Proposition~\ref{D6X}.
\end{proof}

In Theorem~\ref{Thm-II} we shall characterize the Galois closed
$S$\hyp{}relational clones. In preparation of the proof we need
several lemmata.

\begin{lemma}[{cf.\ \cite[Lemma 1.2.2]{PoeK79}}]\label{D7}
  Let $F \subseteq \SOp(A)$. Then each invariant
  $\rho \in \SInva[m] F$ \textup{(}$m\in\N_{+}$\textup{)} can be
  obtained from $\Gamma_{F}(\chi^{\blambda_{\rho}})$ by
  projections. Consequently,
  $\SInv F = \SRg{\{ \, \Gamma_F(\chi^{\blambda_\rho}) \mid \rho \in
    \SRel(A) \, \}}$.
\end{lemma}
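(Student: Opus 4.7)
My plan is to prove the lemma by exhibiting each invariant $\rho \in \SInva[m] F$ explicitly as a projection of $\Gamma_F(\chi^{\blambda_\rho})$, using Proposition~\ref{D5a}\ref{D5a-ii} to unpack the elements of the $\Gamma$-closure. First I would fix $\rho$, write its signum as $\blambda_\rho = (s_1, \dots, s_n) \in S^n$, and enumerate the columns as $\rho = (r_1, \dots, r_n)$ with $\rho_s = \{ \, r_i \mid s_i = s \, \}$ (Definition~\ref{C3}). Since the rows of the $(k^n \times n)$-matrix $(\kappa_1, \dots, \kappa_n)$ run through \emph{all} of $A^n$, for each $j \in \{1, \dots, m\}$ I can pick an index $z_j \in \{1, \dots, k^n\}$ so that the $z_j$-th row of $(\kappa_1, \dots, \kappa_n)$ equals the $j$-th row $(r_1^{(j)}, \dots, r_n^{(j)})$ of $(r_1, \dots, r_n)$. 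The key bookkeeping fact is then that $\pr_{z_1, \dots, z_m}(\kappa_i) = r_i$ for every $i \in \{1, \dots, n\}$.

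The plan is then to verify $\rho = \pr_{z_1, \dots, z_m}(\Gamma_F(\chi^{\blambda_\rho}))$ componentwise. The easy inclusion $\rho_s \subseteq \pr_{z_1, \dots, z_m}(\Gamma_F(\chi^{\blambda_\rho}))_s$ is immediate: for each $r_i \in \rho_s$ we have $s_i = s$, hence $\kappa_i \in \chi^{\blambda_\rho}_s \subseteq \Gamma_F(\chi^{\blambda_\rho})_s$, and projecting returns $r_i$. For the reverse inclusion I would take any $r' \in \Gamma_F(\chi^{\blambda_\rho})_s$ and apply Proposition~\ref{D5a}\ref{D5a-ii} to get $f \in \SSg{F}$ with $\sgn(f) = (t_1, \dots, t_q)$ together with indices $i_1, \dots, i_q$ such that $\kappa_{i_l} \in \chi^{\blambda_\rho}_{t_l s}$ (equivalently $s_{i_l} = t_l s$) and $r' = f(\kappa_{i_1}, \dots, \kappa_{i_q})$. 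Because $f$ acts componentwise, projection commutes with $f$, so $\pr_{z_1, \dots, z_m}(r') = f(r_{i_1}, \dots, r_{i_q})$, and each $r_{i_l} \in \rho_{s_{i_l}} = \rho_{t_l s}$. Since $\rho \in \SInv F = \SInv \SSg{F}$ by Lemma~\ref{D3A}, the definition of $\Spreserves$ in \ref{D1}\eqref{D1i} gives $f(r_{i_1}, \dots, r_{i_q}) \in \rho_s$, as required.

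For the second assertion, each $\Gamma_F(\chi^{\blambda_\rho})$ belongs to $\SInv F$ by Definition~\ref{D5}, so $\SRg{\{\Gamma_F(\chi^{\blambda_\rho}) \mid \rho \in \SRel(A)\}} \subseteq \SInv F$ because $\SInv F$ is an $S$-relational clone (Lemma~\ref{D3A}). The reverse inclusion follows from the first part combined with the fact, recorded in Remark~\ref{C5}(B), that arbitrary projections $\pr_{z_1, \dots, z_m}$ are derivable operations in any $S$-relational clone. The only delicate point I anticipate is the signum bookkeeping: one has to observe that the index shifts $t_l \mapsto t_l s$ produced by Proposition~\ref{D5a}\ref{D5a-ii} match exactly the shifts in the invariance condition $f \Spreserves \rho$, so that the step from $r_{i_l} \in \rho_{t_l s}$ to $f(r_{i_1}, \dots, r_{i_q}) \in \rho_s$ is legitimate for every $s \in S$ simultaneously, including those $s$ that do not occur in $\blambda_\rho$ (for which the contrapositive argument forces $\Gamma_F(\chi^{\blambda_\rho})_s = \emptyset$).
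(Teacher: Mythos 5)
Your proof is correct and reaches the same target equality as the paper, namely $\rho = \pr_{z_1, \dots, z_m}(\Gamma_F(\chi^{\blambda_\rho}))$ for the row indices $z_1, \dots, z_m$ chosen exactly as you choose them; the second assertion is then handled identically in both arguments via Remark~\ref{C5}(B) and Lemma~\ref{D3A}. The one genuine difference is how that equality is justified. The paper first observes $\rho = \pr_{z_1, \dots, z_m}(\chi^{\blambda_\rho})$ and then invokes Lemma~\ref{LemGammaF} (projections commute with $\Gamma_F$) to get $\pr_{z_1, \dots, z_m}(\Gamma_F(\chi^{\blambda_\rho})) = \Gamma_F(\pr_{z_1, \dots, z_m}(\chi^{\blambda_\rho})) = \Gamma_F(\rho) = \rho$, the last step using $\rho \in \SInv F$. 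You instead unfold the elements of $\Gamma_F(\chi^{\blambda_\rho})_s$ via Proposition~\ref{D5a}\ref{D5a-ii}, push the projection through the componentwise application of $f$, and conclude with $f \Spreserves \rho$ from Definition~\ref{D1}\eqref{D1i}. The two routes carry the same content---your computation is in effect a self-contained re-proof of the special case of Lemma~\ref{LemGammaF} that is actually needed---but yours makes the signum bookkeeping ($s_{i_l} = t_l s$ matching the shifts in the invariance condition) fully explicit, whereas the paper's is shorter at the price of leaning on a lemma whose proof is only sketched. Your closing parenthetical is also sound: for $s$ not occurring in $\blambda_\rho$ one has $\rho_s = \emptyset$, and the contrapositive of the inclusion you prove does force $\Gamma_F(\chi^{\blambda_\rho})_s = \emptyset$, which is exactly what the componentwise equality requires at such $s$.
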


\begin{proof}According to Definition~\ref{C3}, for
  $\blambda_{\rho}=(s_{1},\dots,s_{n})$, we have
  $\rho_{s} = \{ \, r_{i} \mid s_{i} = s \, \}$.
 Thinking of $\rho$
  as the $m \times n$ matrix $M=(r_{1}, \dots, r_{n})$
  ($r_{i}\in A^{m}$), we see that each row of $M$ appears as a row of
  the $|A|^n\times n$ matrix
  $\chi^{\blambda_\rho}=(\kappa_1, \dots, \kappa_n)$ say, the $j$-th
  row of $\rho$ is the $z_j$-th row of $\chi^{\blambda_\rho}$
  ($j \in \{1, \dots, m\}$). Then
  $\rho = \pr_{z_1, \dots, z_m}(\chi^{\blambda_\rho})$. By
  Lemma~\ref{LemGammaF}, we have
  $\Gamma_F(\rho) = \pr_{z_1, \dots,
    z_m}(\Gamma_{F}(\chi^{\blambda_\rho}))$. Since $\rho \in \SInv F$,
  we get
  $\rho = \Gamma_F(\rho) = \pr_{z_1, \dots,
    z_m}(\Gamma_F(\chi^{\blambda_\rho}))$. Therefore,
  $\SInv F \subseteq \SRg{\{ \, \Gamma_F(\chi^{\blambda_\rho}) \mid
    \rho \in \SRel(A) \, \}}$ by Remark~\ref{C5}(B). The inclusion
  $\SRg{\{ \, \Gamma_F(\chi^{\blambda_\rho}) \mid \rho \in \SRel(A) \,
    \}} \subseteq \SInv F$ holds, because
  $\{ \, \Gamma_F(\chi^{\blambda_\rho}) \mid \rho \in \SRel(A) \, \}
  \subseteq \SInv F$ and $\SInv F$ is an $S$\hyp{}relational clone by
  Lemma~\ref{D3A}.
\end{proof}

\begin{notation}\label{N}
  For given $Q\subseteq \SRel(A)$ and $\rho\in\SRel(A)$ let
  \[
    \gamma(\rho) := \gamma_{Q}(\rho) := \bigcap \{ \, \rho' \in
    \SRg{Q} \mid \rho \subseteq \rho' \,\}
  \]
  be the smallest $S$\hyp{}relation in $\SRg{Q}$ that has the same
  arity as $\rho$ and contains $\rho$ (the index $Q$ for $\gamma_{Q}$
  is omitted if the $Q\subseteq\SRel(A)$ under consideration is
  fixed). Note that $\gamma(\rho)\in\SRg{Q}$ because $\SRg{Q}$ is an
  $S$\hyp{}relational clone and therefore closed under intersections
  (cf.\ Definition~\ref{C4}\ref{C4-6}).
\end{notation}

  \begin{lemma}\label{L0}
    Let $Q\subseteq\SRel(A)$ and $F:=\SPol Q$. If
    $\Gamma_{F}(\chi^{\lambda})=\gamma(\chi^{\lambda})$ for each
    signum $\lambda=(s_{1},\dots,s_{n})$ ($s_{1},\dots,s_{n}\in S$,
    $n\in\N_{+}$), then $\SRg{Q} = \SInv \SPol Q$.
  \end{lemma}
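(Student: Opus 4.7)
The plan is to reduce the statement to a direct application of Lemma~\ref{D7} combined with the hypothesis.

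First I would observe that the inclusion $\SRg{Q} \subseteq \SInv \SPol Q$ is free: we have $Q \subseteq \SInv \SPol Q$ by the general properties of the Galois connection $\SPol$--$\SInv$, and $\SInv \SPol Q = \SInv F$ is an $S$\hyp{}relational clone by Lemma~\ref{D3A}; therefore the $S$\hyp{}relational clone $\SRg{Q}$ generated by $Q$ is contained in it. This half does not use the hypothesis at all.

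For the converse inclusion $\SInv F \subseteq \SRg{Q}$, I would invoke Lemma~\ref{D7}, which says that
\[
  \SInv F = \SRg{\{ \, \Gamma_F(\chi^{\blambda_\rho}) \mid \rho \in \SRel(A) \, \}}.
\]
Thus it suffices to show that every $S$\hyp{}relation of the form $\Gamma_F(\chi^\lambda)$ (for $\lambda \in S^n$, $n \in \N_+$) already lies in $\SRg{Q}$. But by the hypothesis of the lemma, $\Gamma_F(\chi^\lambda) = \gamma(\chi^\lambda) = \gamma_Q(\chi^\lambda)$, and $\gamma_Q(\chi^\lambda) \in \SRg{Q}$ by the very definition in Notation~\ref{N} (it is an intersection of members of the $S$\hyp{}relational clone $\SRg{Q}$, which is closed under intersections of equal arity).

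Hence $\{ \, \Gamma_F(\chi^{\blambda_\rho}) \mid \rho \in \SRel(A) \, \} \subseteq \SRg{Q}$, so applying the $S$\hyp{}relational clone closure to both sides gives
\[
  \SInv F = \SRg{\{ \, \Gamma_F(\chi^{\blambda_\rho}) \mid \rho \in \SRel(A) \, \}} \subseteq \SRg{\SRg{Q}} = \SRg{Q},
\]
which completes the proof. The lemma itself is essentially a bookkeeping step: the real work is pushed into establishing the hypothesis $\Gamma_F(\chi^\lambda) = \gamma(\chi^\lambda)$, which presumably is the technical heart of the proof of Theorem~\ref{Thm-II} and will be handled in a separate lemma. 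In the present statement, no obstacle arises beyond correctly citing Lemma~\ref{D7} and unpacking the definition of $\gamma_Q$.
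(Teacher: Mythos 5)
Your proposal is correct and follows essentially the same route as the paper: the easy inclusion via the Galois connection and Lemma~\ref{D3A}, then Lemma~\ref{D7} combined with the hypothesis and the fact that $\gamma(\chi^{\lambda})\in\SRg{Q}$ by Notation~\ref{N}, closing with $\SRg{\SRg{Q}}=\SRg{Q}$. Nothing is missing.
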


  \begin{proof}
    By Lemma~\ref{D3A} and the general properties of Galois
    connections, we have
    $\SRg{Q} \subseteq \SInv \SPol \SRg{Q} = \SInv F$. In order to
    prove the converse inclusion, it suffices to show that
    $\Gamma_F(\chi^\lambda) \in \SRg{Q}$ for all signa $\lambda$,
    because this implies, by Lemma~\ref{D7}, that
    \begin{align*}
      \SInv F
      & = \SRg{\{ \, \Gamma_F(\chi^{\blambda_\rho}) \mid \rho \in \SRel(A) \, \}}
      \\ & \subseteq \SRg{\{ \, \Gamma_F(\chi^\lambda) \mid \lambda \in S^* \, \}}
           \subseteq \SRg{\SRg{Q}}
           = \SRg{Q}.
    \end{align*}
    Furthermore, $\gamma(\chi^{\lambda})\in \SRg{Q}$ by Definition
    \ref{N}. Thus the assumption
    $\Gamma_{F}(\chi^{\lambda})=\gamma(\chi^{\lambda})$ implies
    $\Gamma_{F}(\chi^{\lambda})\in \SRg{Q}$ and we are done.
  \end{proof}

  For the next lemmata we always assume that $Q\subseteq\SRel(A)$ is
  arbitrarily chosen but fixed and that $F:=\SPol Q$.

\begin{lemma}\label{L1} We have
  $\Gamma_{F}(\chi^{\lambda})_{e}=\gamma(\chi^{\lambda})_{e}$ for all
  signa $\lambda=(s_{1},\dots,s_{n})$
  \textup{(}$n\in\N_{+}$, $s_{1},\dots, s_{n}\in S$\textup{)}.
\end{lemma}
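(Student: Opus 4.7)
The inclusion $\Gamma_F(\chi^\lambda)_e \subseteq \gamma(\chi^\lambda)_e$ is immediate: by Lemma~\ref{D3A} we have $\SRg{Q} \subseteq \SInv\SPol Q = \SInv F$, so $\gamma(\chi^\lambda) \in \SInv F$ and contains $\chi^\lambda$, whence minimality of $\Gamma_F(\chi^\lambda)$ forces $\Gamma_F(\chi^\lambda) \subseteq \gamma(\chi^\lambda)$ componentwise. For the reverse inclusion I would take $r \in \gamma(\chi^\lambda)_e$ and construct a single $n$-ary operation $f_r \in F$ of signum $\lambda$ with $f_r(\kappa_1,\dots,\kappa_n) = r$; since $F = \SPol Q$ is already an $S$-preclone (Lemma~\ref{D3A}), we have $\SSg{F} = F$, and Proposition~\ref{D5a}\ref{D5a-iii} then places $r$ in $\Gamma_F(\chi^\lambda)_e$.

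The candidate $f_r$ is forced by the enumeration of $A^n$: writing $k := |A|$ and listing the rows of $\chi^\lambda$ as $\kappa^{(1)},\dots,\kappa^{(k^n)}$ (so they exhaust $A^n$), define $f_r \in \SOpa[n](A)$ with $\sgn(f_r) = \lambda$ by $f_r(\kappa^{(z)}) := r_z$. Then $f_r(\kappa_1,\dots,\kappa_n) = r$ tautologically, and the task reduces to showing $f_r \in \SPol Q$. Fix $\rho \in Q$ of arity $m$, $s \in S$, and tuples $b_i \in \rho_{s_i s}$ for $i = 1,\dots,n$; the goal is $f_r(b_1,\dots,b_n) \in \rho_s$. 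Put $\sigma := \mu_s(\rho) \in \SRg{Q}$, so that $\sigma_t = \rho_{ts}$, $b_i \in \sigma_{s_i}$, and $\sigma_e = \rho_s$. For each $j \in \{1,\dots,m\}$ let $z_j$ be the unique index with $\kappa^{(z_j)} = (b_1^{(j)},\dots,b_n^{(j)})$; then $b_i = \pr_{z_1,\dots,z_m}(\kappa_i)$, and consequently $f_r(b_1,\dots,b_n) = \pr_{z_1,\dots,z_m}(r)$.

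The crux is to realise $\sigma$ as the $\pr_{z_1,\dots,z_m}$-image of an $S$-relation in $\SRg{Q}$ that contains $\chi^\lambda$. For this I would ``pull $\sigma$ back'' to the $k^n$-ary $S$-relation
\[
  \hat\sigma_t := \{\, \kappa \in A^{k^n} \mid (\kappa_{z_1},\dots,\kappa_{z_m}) \in \sigma_t \,\} \qquad (t \in S).
\]
Each $\hat\sigma_t$ is pp-defined componentwise from $\sigma_t$ by a quantifier-free formula (only $\sigma_t$ and equalities), so by Remark~\ref{C5}(C) we have $\hat\sigma \in \SRg{Q}$; alternatively, $\hat\sigma$ is assembled from $\sigma$ using Cartesian product with the $S$-diagonal $(A)_{s\in S}$, permutation of rows, and identification of rows when some $z_j$'s coincide, all derivable by Remark~\ref{C5}(B). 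Further, $\chi^\lambda \subseteq \hat\sigma$: for $s_j = s_i$ we have $\pr_{z_1,\dots,z_m}(\kappa_j) = b_j \in \sigma_{s_j} = \sigma_{s_i}$, so $\kappa_j \in \hat\sigma_{s_i}$, while for $t \notin \Sgn(\chi^\lambda)$ we have $\chi^\lambda_t = \emptyset$. By minimality of $\gamma(\chi^\lambda)$ we conclude $\gamma(\chi^\lambda) \subseteq \hat\sigma$, hence
\[
  f_r(b_1,\dots,b_n) = \pr_{z_1,\dots,z_m}(r) \in \pr_{z_1,\dots,z_m}(\hat\sigma_e) \subseteq \sigma_e = \rho_s,
\]
as required.

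I expect the main obstacle to be the construction of $\hat\sigma$ and the verification that it lies in $\SRg{Q}$; the pp-definability characterization in Remark~\ref{C5}(C) makes this a one-line check, but without that one would need careful bookkeeping with the elementary operations, especially to handle coincidences among the $z_j$'s via identification of rows. Everything else (the definition of $f_r$, the index-translation $\mu_s$, and the final chain of inclusions) is essentially forced once the pull-back is in place.
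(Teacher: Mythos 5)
Your proof is correct and is essentially the paper's own argument: both define the operation $f_r$ by reading off $r$ against the enumeration of $A^n$ given by the rows of $\chi^\lambda$, invoke Proposition~\ref{D5a}\ref{D5a-iii} to translate membership in $\Gamma_F(\chi^\lambda)_e$ into membership of $f_r$ in $F=\SPol Q$, use the index translation $\mu$ to reduce to the $e$\hyp{}component, and pull the constraint relation back along the projection $\pr_{z_1,\dots,z_m}$ that matches the witness tuples to rows of $\chi^\lambda$, finishing with the minimality of $\gamma(\chi^\lambda)$ in $\SRg{Q}$. The only difference is presentational: the paper runs the argument by contradiction (intersecting the pullback with $\gamma$ to produce a strictly smaller member of $\SRg{Q}$ containing $\chi^\lambda$), whereas you run it directly (concluding $\gamma\subseteq\hat\sigma$), which is a cosmetic reorganization of the same construction.
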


\begin{proof} Recall that
  $\chi^\lambda_s = \{ \, \kappa_i \mid s_i = s,\, i \in \{1, \dots,
  n\} \, \}$
 for $s\in S$ and $\lambda=(s_{1},\dots,s_{n})$, where
  $(\kappa_1, \dots, \kappa_n)$ is an $\lvert A \rvert^n\times n$
  matrix with columns $\kappa_1, \dots, \kappa_n$ such that the rows
  are all $n$-tuples from $A^n$.

  Let $\gamma:=\gamma(\chi^{\lambda})$ and assume that there exists
  $\bfr \in \gamma_e \setminus \Gamma_F(\chi^\lambda)_e$ (and we are
  going to show that this leads to a contradiction). By
  Proposition~\ref{D5a}\ref{D5a-iii} (take
  $(r_{1},\dots,r_{n})=(\kappa_{1},\dots,\kappa_{n})$ and note
  $e\in I(S)$), the function $f_{\bfr}$ of signum $\lambda$ that is
  defined by $f_{\bfr}(\kappa_1, \dots, \kappa_n) = \bfr$ does not
  belong to $F = \SPol Q$. Therefore there exists an $S$\hyp{}relation
  $\theta \in Q$, say $m$-ary, that is not $S$\hyp{}preserved by
  $f_{\bfr}$, i.e., there exist a $v \in S$ and tuples
  $\bfr_i \in \theta_{s_i v}$ ($i \in \{1, \dots, n\}$) such that
  $\mathbf{a} := f_{\bfr}(\bfr_1, \dots, \bfr_n) \notin \theta_v$. By
  index translation by $v$, we obtain the $S$\hyp{}relation
  $\theta^* := \mu_{v}(\theta) \in \SRg{Q}$ with
  $\theta^*_s = \theta_{sv}$ ($s \in S$) that is not
  $S$\hyp{}preserved by $f_{\bfr}$ either, because
  $\bfr_i \in \theta_{s_i v} = \theta^*_{s_i}$
  ($i \in \{1, \dots, n\}$) and
  $\mathbf{a} \notin \theta_v = \theta^*_e$.

  Consider the matrix $M = (\bfr_1, \dots, \bfr_n)$. The rows of $M$
  occur as rows of \linebreak $(\kappa_1, \dots, \kappa_n)$; say the $j$-th row
  of $M$ equals the $h_j$-th row of $\chi^\lambda$. Let
  $(\delta_{\tau}^{q+m})_{s\in S}$ be the diagonal relation with
  $\tau = \{ \, (h_j, q + j) \mid j \in \{1, \dots, m\} \, \}$, and
  let
  $\theta' := (\gamma \times \theta^{*}) \wedge \delta^\tau_{q+m}$. In
  other words, $\theta'_s$ comprises those tuples from
  $(\gamma \times \theta^{*})_s$ whose $h_j$-th and $(q + j)$-th
  components are equal, for $j \in \{1, \dots, m\}$. By removing the
  last $m$ rows, we obtain the $S$\hyp{}relation
  $\theta'' := \pr_{1, \dots, q}(\theta')$. Since the tuple
  $\kappa_i \times \bfr_i$ belongs to
  $(\gamma \times \theta^{*})_{s_i}$ and hence also to
  $\theta'_{s_i}$, for all $i \in \{1, \dots, n\}$, we have
  $\kappa_i \in \theta''_{s_i}$ for all $i \in \{1, \dots, n\}$, i.e.,
  $\chi^\lambda \subseteq \theta''$. Moreover,
  $\theta'' \subseteq \gamma$ holds by construction. The
  $S$\hyp{}relation $\theta''$ was built from relations in $Q$ by
  using operations described in Definition~\ref{C4} and
  Remark~\ref{C5}; therefore $\theta'' \in \SRg{Q}$.

  We show that $\bfr\notin \theta''_{e}$. Indeed, if
  $\bfr\in \theta''_{e}$ then there would exist a
  $\mathbf{b}\in \theta^{*}$ such that
  $\bfr\times \mathbf{b}\in \delta^\tau_{q+m}$ (recall the definitions
  of $\theta'$ and $\theta''$, and that
  $\bfr = f_{\bfr}(\kappa_1, \dots, \kappa_n) \in \gamma_e$ thus
  $\bfr\times \mathbf{b}\in \gamma_{e}\times \theta^{*}$),
  consequently
  \begin{align*}
 \mathbf{b}
 &= \pr_{h_{1},\dots,h_{m}}(\bfr) =
    \pr_{h_{1},\dots,h_{m}}(f_{\bfr}(\kappa_{1},\dots,\kappa_{n})) \\
    & = f_{\bfr}(\pr_{h_{1},\dots,h_{m}}(\kappa_{1}),\dots,
    \pr_{h_{1},\dots,h_{m}}(\kappa_{n}))
 =
    f_{\bfr}(\bfr_{1},\dots,\bfr_{n})=\mathbf{a}\notin \theta^{*},
  \end{align*}
  a contradiction.

  Thus $\theta''_e \subsetneqq \gamma_e$ and so
  $\theta'' \subsetneqq \gamma$. We conclude that $\gamma$ is not the
  smallest $S$\hyp{}relation in $\SRg{Q}$ containing
  $\chi^\lambda$. We have reached the desired contradiction.
\end{proof}

The following theorem generalizes the characterization of (usual)
relational clones (cf.\ \cite[Satz 1.2.3]{PoeK79}) to
$S$\hyp{}relational clones.

\begin{theorem}\label{Thm-II}
  Let $S$ be an arbitrary monoid. Then, for $Q\subseteq\SRel(A)$, we
  have
  \begin{align*}
    \SRg{Q} = \SInv \SPol Q,
  \end{align*}
  i.e., the Galois closure is the $S$\hyp{}relational clone generated
  by $Q$.
\end{theorem}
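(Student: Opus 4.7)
The plan is to derive the theorem from Lemma~\ref{L0}, filling in its hypothesis by extending Lemma~\ref{L1} from the $e$-component to every component of $\chi^\lambda$.

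The easy inclusion $\SRg{Q} \subseteq \SInv \SPol Q$ is standard Galois machinery: by Lemma~\ref{D3A}, $\SInv \SPol Q$ is an $S$-relational clone containing $Q$, hence it contains the generated $S$-relational clone $\SRg{Q}$.

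For the reverse inclusion I would set $F := \SPol Q$ and apply Lemma~\ref{L0}, which reduces the task to verifying $\Gamma_F(\chi^\lambda) = \gamma(\chi^\lambda)$ for every signum $\lambda$. The inclusion $\Gamma_F(\chi^\lambda) \subseteq \gamma(\chi^\lambda)$ is immediate: $\gamma(\chi^\lambda) \in \SRg{Q} \subseteq \SInv F$ contains $\chi^\lambda$, and $\Gamma_F(\chi^\lambda)$ is the least invariant $S$-relation with that property. For the other inclusion, Lemma~\ref{L1} already delivers equality at the $e$-component, and I would extend it to an arbitrary $s \in S$ by repeating the argument of Lemma~\ref{L1} with Proposition~\ref{D5a}\ref{D5a-ii} in place of \ref{D5a-iii}. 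Concretely, a putative $\mathbf{r} \in \gamma(\chi^\lambda)_s \setminus \Gamma_F(\chi^\lambda)_s$ cannot, by Proposition~\ref{D5a}\ref{D5a-ii}, be written as $f(\kappa_{i_1}, \dots, \kappa_{i_q})$ for any $f \in \SSg{F}$ with $\sgn(f) = (t_1, \dots, t_q)$ and $s_{i_j} = t_j s$ for all $j$; from this impossibility one exhibits a function $f_{\mathbf{r}} \notin F$, picks a relation $\theta \in Q$ and an element $v \in S$ witnessing that $f_{\mathbf{r}}$ fails to $S$-preserve $\theta$, and then assembles, via the index translation $\mu_v$, Cartesian product with $\gamma(\chi^\lambda)$, intersection with a suitable $S$-diagonal identifying the matching rows, and projection, an $S$-relation $\theta'' \in \SRg{Q}$ with $\chi^\lambda \subseteq \theta'' \subsetneq \gamma(\chi^\lambda)$. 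This contradicts the minimality of $\gamma(\chi^\lambda)$.

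The main obstacle is choosing the signum $(t_1,\dots,t_q)$ and the columns $\kappa_{i_j}$ of $\chi^\lambda$ so that the witness function $f_{\mathbf{r}}$ is actually defined on enough of $A^q$, and so that the resulting separating relation $\theta''$ distinguishes $\mathbf{r}$ at the $s$-component rather than the $e$-component. This is a direct but fiddly adaptation of the combinatorics closing the proof of Lemma~\ref{L1}; the shift from $e$ to a general $s \in S$ is absorbed into the choice of $v$ and the index translation $\mu_v$ applied to $\theta$ before it is Cartesian-multiplied with $\gamma(\chi^\lambda)$.
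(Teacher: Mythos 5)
Your overall skeleton---the easy inclusion via Lemma~\ref{D3A}, the reduction via Lemma~\ref{L0} to the identity $\Gamma_F(\chi^{\lambda})=\gamma(\chi^{\lambda})$, and the observation that Lemma~\ref{L1} settles the $e$\hyp{}component---is exactly the paper's. The gap is in the step you describe as ``a direct but fiddly adaptation'': repeating the proof of Lemma~\ref{L1} at a component $v\neq e$ with Proposition~\ref{D5a}\ref{D5a-ii} in place of \ref{D5a-iii} does not go through. At the $e$\hyp{}component there is a \emph{single canonical candidate}: the signum is forced to be $\lambda$, the arguments are forced to be $\kappa_{1},\dots,\kappa_{n}$, and since the rows of $(\kappa_{1},\dots,\kappa_{n})$ exhaust $A^{n}$, the equation $f_{\bfr}(\kappa_{1},\dots,\kappa_{n})=\bfr$ determines a unique total function, so that $\bfr\notin\Gamma_F(\chi^{\lambda})_{e}$ is \emph{equivalent} to $f_{\bfr}\notin F$. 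At a component $v\neq e$, Proposition~\ref{D5a}\ref{D5a-ii} quantifies over all arities $q$, all signa $(t_{1},\dots,t_{q})$, and all choices of columns $r_{j}\in\chi^{\lambda}_{t_{j}v}$; even after normalising (one argument of signum $t$ fed with $\kappa_{i}$ for each pair with $tv=s_{i}$), the argument matrix consists of \emph{repeated} columns $\kappa_{i}$, its rows no longer exhaust the relevant power of $A$, and the prescribed values do not determine a function. Hence no single witness $f_{\bfr}\notin F$ can be extracted, and this is exactly the obstacle you name; it is not absorbed by the choice of $v$ and an index translation $\mu_{v}$ applied to $\theta$.

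The paper closes this gap by a genuinely new construction rather than an adaptation: it introduces the auxiliary signum $\lambdahat$ listing, for each distinct entry $t_{i}$ of $\lambda$, all elements of $M_{t_{i}}=\{x\in S\mid xv=t_{i}\}$ with appropriate multiplicities, forms the larger relation $\chi^{\lambdahat}$ (whose rows again exhaust a full power of $A$), realises $\chi^{\lambda}$ as a row projection $\pr_{z_{1},\dots,z_{m}}(\chi^{\lambdahat})$, and uses the self\hyp{}intersection $\sqcap^{v}$ of Definition~\ref{C4}\ref{C4-8} to obtain $\gamma(\chi^{\lambda})_{v}\subseteq\rho_{e}$ for $\rho=\pr_{z_{1},\dots,z_{m}}(\gamma(\chi^{\lambdahat}))$. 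This reduces the $v$\hyp{}component of $\lambda$ to the $e$\hyp{}component of $\lambdahat$, where Lemma~\ref{L1} applies verbatim; the resulting representation of a lift $\widehat{\bfr}$ is then pushed back through the projection and combined with $f\Spreserves\Gamma_{F}(\chi^{\lambda})$. There is also a degenerate case your sketch omits: when $\{t_{1},\dots,t_{n}\}\cap Sv=\emptyset$ there are no candidate functions at all, and one must show separately, by intersecting $\gamma(\chi^{\lambda})$ with an $S$\hyp{}diagonal that is empty on $Sv$, that $\gamma(\chi^{\lambda})_{v}=\emptyset$. So the missing ingredient is the passage to $\lambdahat$ and $\sqcap^{v}$, not merely bookkeeping.
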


\begin{proof}
  Again we use the notation $F:=\SPol Q$ and $\gamma$ as in the
  lemmata before. Because of Lemma~\ref{L0} it is enough to show
  $\Gamma_{F}(\chi^{\lambda})_{v}=\gamma(\chi^{\lambda})_{v}$ for each
  $v\in S$ and arbitrary signa $\lambda$.

  By Lemma~\ref{L1} this is true for $v=e$. Thus let
  $v\in S\setminus\{e\}$ and let $\lambda=(t'_{1},\dots,t'_{p})$. We
  have to show
  $\Gamma_{F}(\chi^{\lambda})_{v}=\gamma(\chi^{\lambda})_{v}$. Note
  that the entries of $\lambda$ could be permuted arbitrarily without
  changing this equality (this follows from the fact that
  $S$\hyp{}preclones are closed under permutation of arguments, cf.\
  Remark~\ref{C2a}).

  Let $\Lambda=\{t_{1},\dots,t_{n}\}$ be the set of all different
  entries in the signum $\lambda$, i.e.,
  $\Lambda=\{t'_{1},\dots,t'_{p}\}$. Ordering the entries of $\lambda$
  correspondingly we can assume that
  \begin{align*}
    \lambda=(\underbrace{t_{1},\dots,t_{1}}_{\ell_{1}},\dots,
    \underbrace{t_{i},\dots,t_{i}}_{\ell_{i}},\dots,
    \underbrace{t_{n},\dots,t_{n}}_{\ell_{n}}),
  \end{align*}
  where $t_{i}$ appears $\ell_{i}$ times ($i\in\{1,\dots,n\}$), thus
  $p=t_{1}+ \ldots + t_{n}$. The corresponding elements (columns) of
  $\chi^{\lambda}$ are denoted as follows
  \begin{align*}
    \chi^{\lambda}=(\underbrace{\kappa_{t_{1},1},\dots,
    \kappa_{t_{1},\ell_{1}}}_{\chi^{\lambda}_{t_{1}}},\dots,
    \underbrace{\kappa_{t_{i},1},\dots,
    \kappa_{t_{i},\ell_{i}}}_{\chi^{\lambda}_{t_{i}}},\dots,
    \underbrace{\kappa_{t_{n},1},\dots,
    \kappa_{t_{n},\ell_{n}}}_{\chi^{\lambda}_{t_{n}}})
  \end{align*}

  Let $\cM^{v}=(M_{s})_{s\in S}$ with
  $M_{s} = \{ \, x \in S \mid xv = s \, \}$ be the family as defined
  in \ref{LM} (for simplicity we write $M_{s}$ instead of
  $M^{v}_{s}$). 
 Now we consider the signum
  \begin{align*}
    \lambdahat={}& (
                   \overbrace{\hspace{1.3pt}\dots,
                   \underbrace{s_{1},\dots,s_{1}}_{\ell_{1}},\dots\hspace{1.3pt}}^{s_{1}\in{
                   M_{t_{1}}}},\dots,
                   \overbrace{\hspace{1.3pt}\dots,
                   \underbrace{s_{i},\dots,s_{i}}_{\ell_{i}},\dots\hspace{1.3pt}}^{s_{i}\in{
                   M_{t_{i}}}},\dots,
                   \overbrace{\hspace{1.3pt}\dots,
                   \underbrace{s_{n},\dots,s_{n}}_{\ell_{n}},\dots\hspace{1.3pt}}^{s_{n}\in{
                   M_{t_{n}}}})\\
    =:{}& (v_{1},\dots,\dots,\dots,\dots,\dots,\dots,\dots,\dots,\dots,\dots,\dots,\dots,\dots,\dots, v_{\widehat{n}}),
  \end{align*}
  where each $s\in M_{t_{i}}$ appears exactly $\ell_{i}$ times
  ($i\in\{1,\dots,n\}$). Because of \ref{LM}\ref{LMb} no $s$ can
  appear in different $M_{t_{i}}$.

  At first consider the case $\lambdahat=\emptyset$, i.e.,
  $M_{t_{1}}=\ldots=M_{t_{n}}=\emptyset$, consequently (by the
  definition of $M_{t}$) we have
  $\{t_{1},\dots,t_{n}\}\cap Sv=\emptyset$. Let $m:=|A^{p}|$ be the
  arity of $\chi^{\lambda}$, and let $d\in \SD_{A}$ denote the $m$-ary
  diagonal relation (cf.\ Definition~\ref{C4a}) with $d_{s}=\emptyset$
  for $s\in Sv$ and $d_{s}= A^{m}$ otherwise. We get
  \begin{align*}
    (\gamma(\chi^{\lambda})\land d)_{s}=
    \begin{cases}
      \gamma(\chi^{\lambda})_{s}\land \emptyset= \emptyset&\text{if }s\in Sv,\\
      \gamma(\chi^{\lambda})_{s}\land
      A^{m}=\gamma(\chi^{\lambda})_{s}&\text{otherwise.}
    \end{cases}
  \end{align*}
  Then
  $\chi^{\lambda}\subseteq \rho:=\gamma(\chi^{\lambda})\land
  d\in\SRg{Q}$, consequently $\gamma(\chi^{\lambda})\subseteq \rho$,
  in particular we have
  $\gamma(\chi^{\lambda})_{v}\subseteq\rho_{v}=\emptyset$. Therefore
  $\Gamma_{F}(\chi^{\lambda})_{v}=\gamma(\chi^{\lambda})_{v}=\emptyset$,
  and we are done.

  Thus, from now on, we can assume that $M_{t}\neq \emptyset$ for at
  least one $t\in\{t_{1},\dots,t_{n}\}$. The $S$\hyp{}relation
  $\chi^{\lambdahat}$ consists of
  $\widehat{n}:=\ell_{1}\cdot|M_{t_{1}}|+\ldots+\ell_{n}\cdot|M_{t_{n}}|\geq
  1$ columns which are denoted by
  \begin{align*}
    \chi^{\lambdahat}=(\overbrace{\hspace{1.3pt}\dots,\kappahat_{s_{1},1},\dots,\kappahat_{s_{1},\ell_{1}},\dots\hspace{1.3pt}}^{s_{1}\in{
    M_{t_{1}}}}\,,\;\dots\;,\,\overbrace{\hspace{1.3pt}
    \dots,\kappahat_{s_{n},1},\dots,\kappahat_{s_{n},\ell_{n}},\dots}^{s_{n}\in{
    M_{t_{n}}}}),
  \end{align*}
  i.e.,
  $\chi^{\lambdahat}_{s}=\{\kappahat_{s,1},\dots,\kappahat_{s,\ell_{i}}\}$
  for $s\in M_{t_{i}}$, $i\in\{1,\dots,n\}$.

  Let $\widehat{m}:=|A|^{\widehat{n}}$. Note that the colums of
  $\chi^{\lambda}$ and $\chi^{\lambdahat}$ are elements of $A^{m}$ and
  $A^{\widehat{m}}$, respectively.

  \begin{figure}
    \begin{center}
      \scalebox{0.845}{\includegraphics{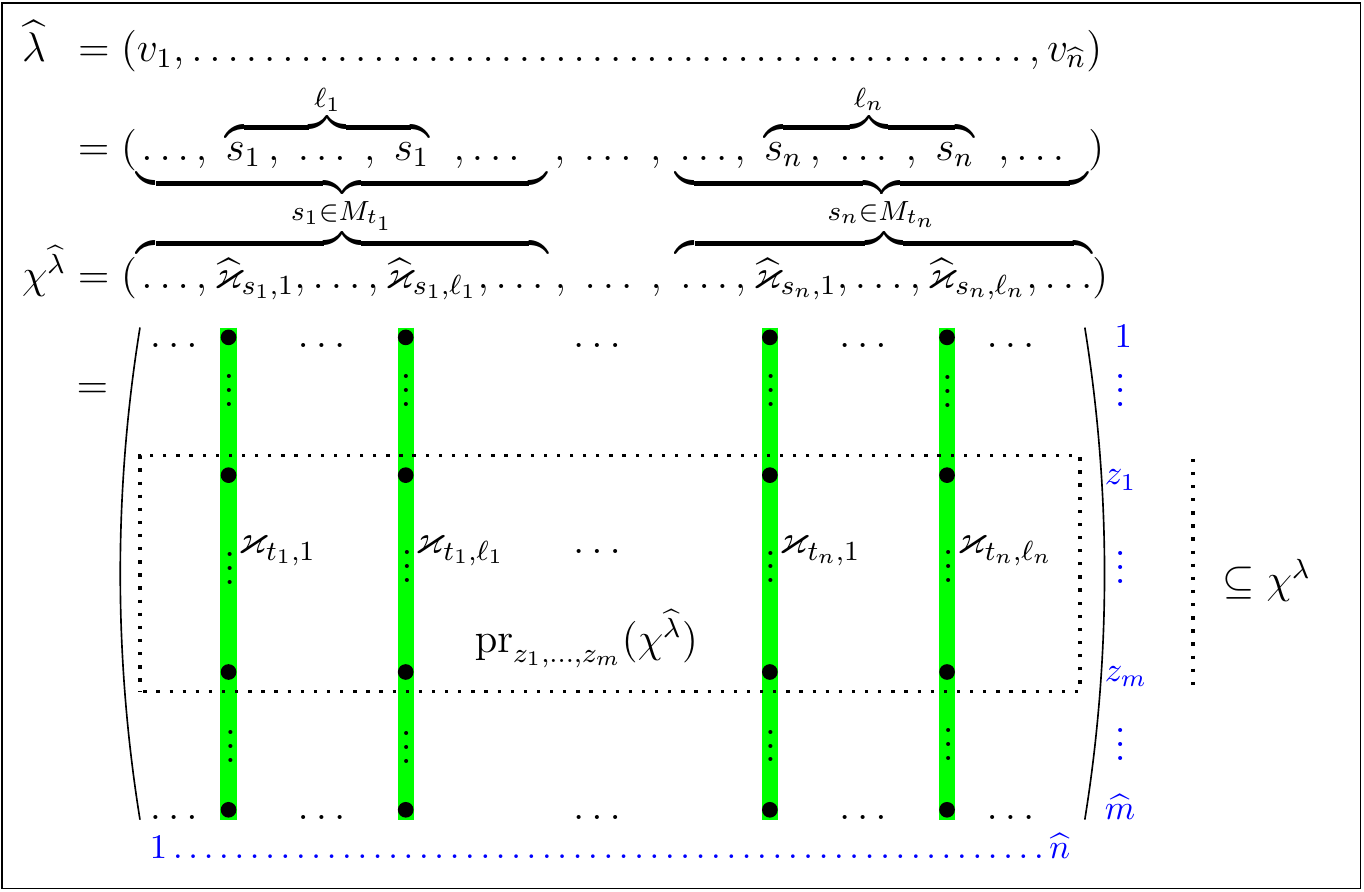}}
    \end{center}
    \caption{The signum $\lambdahat$ and the corresponding columns of
      the $S$\hyp{}relation $\chi^{\lambdahat}$ with the
 projection
      $\pr_{z_{1},\dots,z_{m}}(\chi^{\lambdahat})\subseteq\chi^{\lambda}$}
    \label{fig1}
  \end{figure}

  Choose the rows of $\chi^{\lambdahat}$ with indices
  $z_{1},\dots,z_{m}$ such that (for notation, see Remark~\ref{C5}(B))
  \begin{align*}
    \pr_{z_{1},\dots,z_{m}}(\chi^{\lambdahat})=(\overbrace{\hspace{1.3pt}\dots,
    \kappa_{t_{1},1},\dots,\kappa_{t_{1},\ell_{1}},\dots\hspace{1.3pt}}^{s_{1}\in{
    M_{t_{1}}}}\,,\;\dots\;,\,\overbrace{\hspace{1.3pt}
    \dots,\kappa_{t_{n},1},\dots,\kappa_{t_{n},\ell_{n}},\dots}^{s_{n}\in{
    M_{t_{n}}}}),
  \end{align*}
  i.e., $\pr_{z_{1},\dots,z_{m}}(\kappahat_{s,j})=\kappa_{t_{i},j}$
  for $s\in M_{t_{i}}$, $j\in\{1,\dots,\ell_{i}\}$, therefore
  \begin{align*}\tag{°}
    (\pr_{z_{1},\dots,z_{m}}(\chi^{\lambdahat}))_{s}
    = \pr_{z_{1},\dots,z_{m}}(\chi^{\lambdahat}_{s})
    = \{\kappa_{t_{i},1},\dots,\kappa_{t_{i},\ell_{i}}\}
    = \chi^{\lambda}_{t_{i}}
  \end{align*}
  for $s\in M_{t_{i}}$, $i\in\{1,\dots,n\}$. Note that the rows
  $z_{1},\dots,z_{m}$ must exist because the rows of
  $\chi^{\lambdahat }$ are exactly \textsl{all} tuples in
  $A^{\widehat{m}}$ (cf.\ Definition~\ref{D6Xa}). The structure of
  $\lambdahat$ and $\chi^{\lambdahat}$ and the introduced notation can
  be seen schematically in Figure~\ref{fig1}.

  We define $\rho:=\pr_{z_{1},\dots,z_{m}}(\gamma(\chi^{\lambdahat}))$
  and $\rhobar:=\sqcap_{\cM^{v}}\rho=\sqcap^{v}\rho$ (the last
  equation follows from~\ref{LM}). Since
  $\gamma(\chi^{\lambdahat})\in\SRg{Q}$, also $\rhobar$ belongs to
  $\SRg{Q}$, moreover we have $\rhobar_{v}\subseteq \rho_{e}$ because
  of property \ref{LM}\ref{LMa} (cf.\ Definition~\ref{C4}\ref{C4-8} or
  \ref{C4-Self}\ref{C4-9}).

  In particular, $\chi^{\lambda}\subseteq \rhobar$ because (with the
  above\hyp{}mentioned equality (°)) we have
  $\chi^{\lambda}_{t}=(\pr_{{z_{1},\dots,z_{m}}}(\chi^{\lambdahat}))_{s}
  \subseteq (\pr_{{z_{1},\dots,z_{m}}}(\gamma(\chi^{\lambdahat}))_{s}
  $ for all $s\in M_{t}$, $t\in \Lambda$ (note that
  $\rhobar_{t} = \bigcap \{ \, \rho_{s} \mid s \in M_{t} \,
  \}$). Consequently, $\gamma(\chi^{\lambda})\subseteq \rhobar$ since
  $\gamma(\chi^{\lambda})$ is the least $S$\hyp{}relation in $\SRg{Q}$
  which contains $\chi^{\lambda}$.

  Now, for $\Gamma:=\Gamma_{F}(\chi^{\lambda})$ we can prove
  $\Gamma_{v}=\gamma(\chi^{\lambda})_{v}$. We have to show only
  $\gamma(\chi^{\lambda})_{v}\subseteq \Gamma_{v}$ (the other
  inclusion is always fulfilled).

  Let $\bfr\in \gamma(\chi^{\lambda})_{v}$. Then $\bfr\in \rho_{e}$
  since
  $\gamma(\chi^{\lambda})_{v}\subseteq \rhobar_{v}\subseteq
  \rho_{e}$. By the definition of $\rho$ there must exist
  $\mathbf{\widehat{r}}\in \gamma(\chi^{\lambdahat})_{e}$ such that
  $\pr_{{z_{1},\dots,z_{m}}}(\mathbf{\widehat{r}})=\bfr$. By
  Lemma~\ref{L1}, $\Gamma_{e}=\gamma(\chi^{\lambda})_{e}$, thus
  $\mathbf{\widehat{r}}\in \Gamma_{e}$. According to
  Proposition~\ref{D5a}\ref{D5a-iii} (with $\rho=\chi^{\lambdahat}$,
  $r=\mathbf{\widehat{r}}$) there exist $f\in F$ with
  $\sgn(f)=\lambdahat=(v_{1},\dots,v_{\widehat{n}})$ and
  $\widehat{\bfr}_{j}\in \chi^{\lambdahat}_{v_{j}}$
  ($j\in\{1,\dots,\widehat{n}\}$) such that
  $\widehat{\bfr}
  =f(\widehat{\bfr}_{1},\dots,\widehat{\bfr}_{\widehat{n}})$.

  By the definition of $\lambdahat$, each $v_{j}$ belongs to $M_{t}$
  for some $t\in \Lambda$, consequently
  $\pr_{{z_{1},\dots,z_{m}}}(\chi^{\lambdahat}_{v_{j}})=\chi^{\lambda}_{t}=\chi^{\lambda}_{v_{j}v}$,
  the last equality follows from condition \ref{LM}\ref{LMc}.

  Applying $\pr_{{z_{1},\dots,z_{m}}}$ to
  $\widehat{\bfr}
  =f(\widehat{\bfr}_{1},\dots,\widehat{\bfr}_{\widehat{n}})$, we get
  $\bfr=f(\bfr_{1},\dots,\bfr_{\widehat{n}})$ where
  $\bfr_{j}=\pr_{{z_{1},\dots,z_{m}}}(\widehat{\bfr}_{j})$
  ($j\in\{1,\dots,\widehat{n}\}$) is an element of
  $\pr_{{z_{1},\dots,z_{m}}}(\chi^{\lambdahat}_{v_{j}})
  =\chi^{\lambda}_{v_{j}v}$. Thus
  $\bfr\in
  f(\chi^{\lambda}_{v_{1}v},\dots,\chi^{\lambda}_{v_{\widehat{n}}v})\subseteq
  f(\Gamma_{v_{1}v},\dots,\Gamma_{v_{\widehat{n}}v})\subseteq
  \Gamma_{v}=\Gamma_{F}(\chi^{\lambda})_{v}$ (the last inclusion
  follows from $f\Spreserves\Gamma_{F}(\chi^{\lambda})$, cf.\
  \ref{D1}\eqref{D1i}, because $f\in F$). Since $\bfr$ was chosen
  arbitrarily,
  $\gamma(\chi^{\lambda})_{v}\subseteq\Gamma_{F}(\chi^{\lambda})_{v}$
  and we are done.
\end{proof}

As a final remark to conclude this section, we would like to present a
connection between $S$\hyp{}preclones and minions (and between the
corresponding Galois connections).

For any $\lambda \in S^n$, let
$F^{(\lambda)} := \{ \, f \in F \mid \sgn(f) = \lambda \, \}$ (the
part of $F$ with signum $\lambda$). For $s \in S$, let
$F^{[s]} := \bigcup_{\lambda \in W_s} F^{(\lambda)}$, where
$W_s := \{ \, (\underbrace{s, \dots, s}_n) \mid n \in \N_{+} \, \}$
(the $S$\hyp{}signed part of $F$).

Let $f \colon A^n \to B$ and $g \colon A^m \to B$. We say that $f$ is
a \emph{minor} of $g$, and we write $f \leq g$, if there exists a map
$\sigma \colon \{1, \dots, m\} \to \{1, \dots, n\}$ such that
$f(a_1, \dots, a_n) = g(a_{\sigma(1)}, \dots, a_{\sigma(m)})$ for all
$a_1, \dots, a_n \in A$. The minor relation $\leq$ is a quasiorder (a
reflexive and transitive relation) on the set
$\Fun(A,B) := \bigcup_{m\in\N_{+}}B^{A^{m}}$. Downwards closed subsets
of $(\Fun(A,B), \mathord{\leq})$ are called \emph{minor\hyp{}closed
  classes} or \emph{minions}.

Minor\hyp{}closed classes can be characterized with a Galois
connection analogous to $\Pol$--$\Inv$ (see
Definition~\ref{def:Pol-Inv}). Instead of relations, the dual objects
are now \emph{relation pairs}, i.e., pairs $(\rho,\rho')$, where
$\rho \in \Rela[m](A)$ and $\rho' \in \Rela[m](B)$ for some
$m \in \N$. Denote by $\RelP(A,B)$ the set of all such relation
pairs. Let $f \in \Fun(A,B)$ and $(\rho,\rho') \in \RelP(A,B)$. We say
that $f$ \emph{preserves} $(\rho,\rho')$, and we write
$f \preserves (\rho,\rho')$, if
$f(\rho, \dots, \rho) \subseteq \rho'$. For $F \subseteq \Fun(A,B)$
and $Q \subseteq \RelP(A,B)$, we define
\begin{align*}
  \PolP Q &:= \{ \, f \in \Fun(A,B) \mid \forall\, (\rho,\rho') \in Q \colon f \preserves (\rho,\rho') \, \}
  && \text{(polymorphisms)}, \\
  \InvP F &:= \{ \, (\rho,\rho') \in \Rel(A,B) \mid \forall\, f \in F \colon f \preserves (\rho,\rho') \, \}
  && \text{(invar.\ rel.\ pairs)}.
\end{align*}
It was shown by Pippenger~\cite[Theorem~2.1]{Pip2002} that if $A$ and
$B$ are finite, then a set $F \subseteq \Fun(A,B)$ is a minion if and
only if $F = \PolP Q$ for some $Q \subseteq \RelP(A,B)$. This was
later generalized by Couceiro and
Foldes~\cite[Theorem~2.1]{CouFol2005} for functions defined and valued
on arbitrary sets; in this case, the Galois closed sets of functions
are the locally closed minions.

For $s \in S$, let
$F^{[s]} := \{ \, f \in \SOp(A) \mid \Sgn(f) = \{s\} \, \}$ be the
\New{$S$\hyp{}signed part of $F$}, i.e., all $S$\hyp{}operations in
$F$ where each argument has signum $s$.

\begin{proposition}\label{D8}
  If $F$ is an $S$\hyp{}preclone on $A$, then for each $s \in S$,
  $F^{[s]}$ is a minion; in particular, for the neutral element $e$ of
  $S$, $F^{[e]}$ is a clone. Moreover if $F = \SPol Q$ for some set
  $Q \subseteq \SRel(A)$, then
  $F^{[s]} = \Pol \{ \, (\rho_{st}, \rho_t) \mid \rho \in Q, \, t \in
  S \, \}$;
 in particular,
  $F^{[e]} = \Pol \{ \, \rho_t \mid \rho \in Q, t \in S \, \}$.
\end{proposition}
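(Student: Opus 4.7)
The plan is to handle the three assertions separately. For the first two I would unpack the $S$\hyp{}preclone closure operations of Definition~\ref{C2} and check by hand that they do what a minion/clone closure requires when we restrict the signum. For the third I would just unfold the definition of $\Spreserves$ and read off the polymorphism condition.

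For the minion property of $F^{[s]}$, take $g \in F^{[s]}$ of arity $m$ and a map $\sigma \colon \{1,\dots,m\} \to \{1,\dots,n\}$; the induced minor is $f(x_1,\dots,x_n) = g(x_{\sigma(1)},\dots,x_{\sigma(m)})$, and the task is to build $f$ inside $F$ while keeping every argument of signum $s$. Since every argument of $g$ carries signum $s$, the operation $\Delta$ of Definition~\ref{C2}\ref{C2-5} never degenerates (its side condition $s_1 = s_2$ is automatic), so any two arguments can be identified; combined with $\zeta$ and $\tau$, this realises arbitrary permutation and identification patterns as described in Remark~\ref{C2a}(A). Any fictitious variables (indices $j \in \{1,\dots,n\}$ not in the image of $\sigma$) are introduced via iterated application of $\nabla^s$, each step adding exactly one new argument of signum $s$. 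The result lies in $F$ and still has all its arguments of signum $s$, so $f \in F^{[s]}$.

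For $F^{[e]}$ being a clone, note first that $\id_A \in F^{[e]}$ because $\sgn(\id_A) = (e)$, and every trivial projection $(p_i^{(n)})^\lambda$ with $\lambda=(e,\dots,e)$ arises from $\id_A$ by $\nabla^e$, $\zeta$, $\tau$ (Remark~\ref{C2a}(B)). If $f, g \in F^{[e]}$ then Definition~\ref{C2}\ref{C2-6} gives $\sgn(f \circ g) = (e \cdot e, \dots, e \cdot e, e, \dots, e) = (e,\dots,e)$, so $f \circ g \in F^{[e]}$; the more general simultaneous substitutions are reached as in Remark~\ref{C2a}(A). Identification of arguments again raises no side condition, so $\{\, \mathring{f} \mid f \in F^{[e]} \,\}$ is a clone on $A$.

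For the Galois characterisation, suppose $F = \SPol Q$ and fix $f \in \SOp(A)$ with $\sgn(f) = (s,\dots,s)$. Unfolding Definition~\ref{D1}\eqref{D1i}, the condition $f \Spreserves \rho$ for $\rho = (\rho_s)_{s \in S} \in Q$ reads $f(\rho_{st}, \dots, \rho_{st}) \subseteq \rho_t$ for every $t \in S$, which, interpreted on the underlying function, is precisely $\mathring{f} \preserves (\rho_{st}, \rho_t)$ as a relation pair. Quantifying over $\rho \in Q$ and $t \in S$ gives the stated identity $F^{[s]} = \Pol\{\, (\rho_{st}, \rho_t) \mid \rho \in Q, \, t \in S \,\}$, and specialising to $s = e$ collapses each relation pair $(\rho_t, \rho_t)$ to the ordinary preservation condition on $\rho_t$, yielding the last formula. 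No step is difficult; the only subtlety worth highlighting is the automatic nature of the $\Delta$\hyp{}side\hyp{}condition inside $F^{[s]}$, which is exactly what forces the restriction to a single signum in the minion statement.
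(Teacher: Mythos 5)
Your proposal is correct and follows essentially the same route as the paper's proof: minor-closedness of $F^{[s]}$ from the preclone operations (with the observation that the side condition of $\Delta$ is automatic for a constant signum), the clone property of $F^{[e]}$ from closure under composition, and the Galois characterisation by unfolding $\Spreserves$ for $\sgn(f)=(s,\dots,s)$ into preservation of the relation pairs $(\rho_{st},\rho_t)$. The paper states the first two parts more tersely as ``clear from the definition,'' so your explicit verification via $\zeta$, $\tau$, $\Delta$, $\nabla^{s}$ is just a fleshed-out version of the same argument.
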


\begin{proof}
  It is clear from the definition that $F^{[s]}$ is minor-closed; in
  particular, it is closed under arbitrary identification of
  arguments. For the neutral element $e$, we furthermore have that
  $F^{[e]}$ is closed under composition; hence $F^{[e]}$ is a clone
  (this also follows from Lemma~\ref{D2}\ref{D2b}). Moreover, an
  $S$\hyp{}operation $f$ with $\sgn(f) = (s, \dots, s)$
  $S$\hyp{}preserves $\rho = (\rho_s)_{s \in S}$ if and only if
  $f(\rho_{st}, \dots, \rho_{st}) \subseteq \rho_t$ for all $t \in
  S$. The latter is equivalent to the condition that the operation $f$
  (ignoring signum) preserves the relation pair $(\rho_{st}, \rho_t)$
  for all $t \in S$. Therefore, if $F = \SPol Q$, then (ignoring
  signa)
  \begin{align*}
    F^{[s]}
    & = \{ \, f \in \SOp(A) \mid \sgn(f) = (s, \dots, s), \, f \Spreserves Q \, \}
    \\ & = \{ \, f \in \Op(A) \mid f \preserves \{ \, (\rho_{st}, \rho_t)
         \mid \rho \in Q, \, t \in S \, \}  \, \} 
    \\ & = \PolP \{ \, (\rho_{st}, \rho_t) \mid \rho \in Q, \, t \in S \, \}.
  \end{align*}
  For the neutral element $e$ of $S$, we get furthermore (see also
  Lemma~\ref{D2}\ref{D2b}) that
  \[
    F^{[e]} = \PolP \{ \, (\rho_t, \rho_t) \mid \rho \in Q, \, t \in S
    \, \} = \Pol \{ \, \rho_t \mid \rho \in Q, t \in S \,
    \}. \tag*{\qedhere}
  \]
\end{proof}


\section{The lattice $\SL_{A}$ of
  $\boldsymbol{S}$\hyp{}preclones} \label{secH}

The $S$\hyp{}preclones on a set $A$ form a lattice $\SL_{A}$ (with
respect to inclusion). The least element is $\SJ_{A}$ (cf.\
Remark~\ref{C2a}) and the largest element is $\SOp(A)$. In this
section we deal with atoms and coatoms of $\SL_{A}$, with embeddings
of the lattice $\cL_{A}$ of (usual) clones on $A$ into $\SL_{A}$, and
with some inner symmetries of $\SL_{A}$. But at first we shall look
for generating systems of the $S$\hyp{}preclone $\SOp(A)$ and the
$S$\hyp{}relational clone $\SRel(A)$.

\begin{proposition}\label{H0}
  \leavevmode
  \begin{enumerate}[label=\textup{(\arabic*)}]
  \item \label{H0-1} The $S$\hyp{}preclone $\SOp(A)$ is finitely
    generated.

    For instance, for $A=\{0,1,\dots,k-1\}$ we have
    \[
      \SSg{\{m^{(e,e)}\} \cup \{ \, \id^{s} \mid s \in S \, \}} =
      \SOp(A)\,,
    \]
    where $m^{(e,e)}$ is the binary $S$\hyp{}operation defined by
    $m(x,y):=\max(x,y)\oplus 1$ \textup{(}$\oplus$~denotes addition
    modulo $k$\textup{)} with $\sgn(m)=(e,e)$.

  \item \label{H0-2} The $S$\hyp{}relational clone $\SRel(A)$ is
    finitely generated.

    For instance, for $A=\{0,1,\dots,k-1\}$ and $k\geq 3$, we have
    \begin{align*}
      \SRg{(\Delta,\nabla,\dots,\nabla), (\mathord{\leq}, \mathord{\leq}, \dots, \mathord{\leq}),
      (\mathord{\neq}, \mathord{\neq}, \dots, \mathord{\neq})} = \SRel(A).
    \end{align*}
    Here $(\sigma,\sigma',\dots,\sigma')$ denotes the relation
    $\rho\in\SRel(A)$ with $\rho_{e}=\sigma$ and $\rho_{s}=\sigma'$
    for $s\in S\setminus\{e\}$.

  \item \label{H0-3} The lattice $\SL_{A}$ is atomic and coatomic,
    i.e., each nontrivial $S$\hyp{}preclone contains a minimal one,
    and is contained in a maximal one.
  \end{enumerate}
\end{proposition}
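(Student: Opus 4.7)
The plan is to address the three claims in turn.

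For part~\ref{H0-1}, I would start from the classical Sheffer-function result due to Webb: on $A=\{0,\dots,k-1\}$ the function $\mathring{m}(x,y)=\max(x,y)\oplus 1$ generates the full clone $\Op(A)$. Since the $e$-signed part of an $S$-preclone is itself a clone (Proposition~\ref{D8}), the $S$-preclone $\SSg{\{m^{(e,e)}\}}$ already contains every $(\mathring{g})^{(e,\dots,e)}$ with $\mathring{g}\in\Op(A)$. To promote $(\mathring{g})^{(e,\dots,e)}$ to an arbitrary signum $\lambda=(s_{1},\dots,s_{n})$, I would compose in the first slot with $\id^{s_{1}}$: by Definition~\ref{C2}\ref{C2-6} the first-argument signum becomes $s_{1}\cdot e=s_{1}$ while the underlying function is unchanged. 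Using cyclic shift $\zeta$ and transposition $\tau$ (cf.\ Remark~\ref{C2a}) one rotates any coordinate into the first slot, applies the same trick with $\id^{s_{i}}$ to convert its signum to $s_{i}$, and rotates back; iterating yields $g=\mathring{g}^{\lambda}$.

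For part~\ref{H0-2}, I would invoke the classical fact that for $k\geq 3$ the set $\{\Delta_{A},\mathord{\leq},\mathord{\neq}\}$ generates $\Rel(A)$ as a relational clone. Using the componentwise operations of Definition~\ref{C4}\ref{C4-2}--\ref{C4-6} applied to $\delta^{S}=(\Delta_{A})_{s\in S}$ (which lies in every $S$-relational clone by Lemma~\ref{C4c}) and to the two uniform generators $(\mathord{\leq},\dots,\mathord{\leq})$ and $(\mathord{\neq},\dots,\mathord{\neq})$, I would obtain every uniform $S$-relation $(\sigma)_{s\in S}$ with $\sigma\in\Rel(A)$. The non-uniform generator $(\Delta_{A},\nabla,\dots,\nabla)$ then serves to distinguish positions of $S$: applying $\sqcap^{v}$ (Definition~\ref{C4}\ref{C4-8}) produces, for each $v\in S$, an $S$-relation $\eta^{v}$ whose only non-$\nabla$ component equals $\Delta_{A}$ and sits at position $v$. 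To realise a given $(\rho_{s})_{s\in S}$, I would, for each $s$, combine the uniform $S$-relation $(\rho_{s})_{t\in S}$ with an arity-matched selector derived from $\eta^{s}$ (via Cartesian product and intersection) so that all positions except $s$ become trivial, and then intersect over all $s\in S$.

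For part~\ref{H0-3}, coatomicity of $\SL_{A}$ follows from part~\ref{H0-1} by a Zorn argument: writing $\SOp(A)=\SSg{G}$ for a finite $G$, any ascending chain of proper $S$-preclones has a proper upper bound (otherwise finitely many elements of $G$ would already lie in one chain member, forcing it to equal $\SOp(A)$), so every proper $S$-preclone is majorised by a maximal one. For atomicity, I would invoke the Galois duality of Theorems~\ref{Thm-I} and~\ref{Thm-II}: $\SL_{A}$ is dually isomorphic to the lattice of $S$-relational clones on $A$, in which atoms and coatoms swap roles. Part~\ref{H0-2} supplies a finite generating set for the top of the dual lattice, and the same Zorn argument makes that lattice coatomic, which is equivalent to $\SL_{A}$ being atomic.

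The main obstacle is part~\ref{H0-2}. The classical generation of $\Rel(A)$ and the componentwise construction of uniform $S$-relations are routine, but assembling an arbitrary non-uniform $(\rho_{s})_{s\in S}$ from the single non-uniform generator $(\Delta_{A},\nabla,\dots,\nabla)$ requires careful interplay of $\mu_{v}$ and $\sqcap^{v}$, whose combined effect depends on the ideal structure of $S$ (specifically on which $v$ are left-invertible and on the subsets $Sv$, cf.\ Remarks~\ref{C4b} and~\ref{LM}). Verifying that every component $\rho_{s}$ can be independently isolated is likely to require a case analysis along the ideals of $S$ and the group of units $I(S)$; once this is done, parts~\ref{H0-1} and~\ref{H0-3} are essentially mechanical consequences.
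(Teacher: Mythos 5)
Your treatments of \ref{H0-1} and \ref{H0-3} match the paper's: for \ref{H0-1} the paper likewise gets all $\{e\}$-signed operations from the Sheffer function and then writes $g^{\lambda}(x_{1},\dots,x_{n})=f(\id^{s_{1}}(x_{1}),\dots,\id^{s_{n}}(x_{n}))$, and for \ref{H0-3} it uses exactly your combination of finite generation (coatomicity of a finitely generated subalgebra lattice) and the Galois duality of Theorems~\ref{Thm-I}/\ref{Thm-II}. The divergence, and the problem, is in \ref{H0-2}. The paper does not construct arbitrary $S$\hyp{}relations from the three generators at all; it shows instead that $\SPol$ of the generators equals $\SJ_{A}$ (via Lemma~\ref{D2}\ref{D2a}, the classical fact $\Rg[A]{\leq,\neq}=\Rel(A)$ for $k\geq 3$, and the observation that $f\Spreserves(\Delta,\nabla,\dots,\nabla)$ forces every argument of signum $s\neq e$ to be fictitious), and then concludes $\SRg{\,\cdot\,}=\SInv\SJ_{A}=\SRel(A)$ from Theorem~\ref{Thm-II} and Proposition~\ref{D1a}. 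This sidesteps the entire assembly problem you wrestle with.

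Your direct route for \ref{H0-2} has a concrete gap at precisely the step you defer. The selectors are actually unproblematic: for \emph{every} $v\in S$, $\sqcap^{v}(\Delta,\nabla,\dots,\nabla)$ has $\Delta$ exactly at position $v$ and $\nabla$ elsewhere, since $(\sqcap^{v}\rho)_{s}=\bigcap\{\rho_{s'}\mid s'v=s\}$ contains the factor $\rho_{e}=\Delta$ if and only if $s=v$; no case analysis over ideals or over $I(S)$ is needed, so your anticipated difficulty is a red herring. The real obstruction is that combining the uniform $S$\hyp{}relation $(\rho_{s_{0}})_{t\in S}$ with $\eta^{s_{0}}$ ``via Cartesian product and intersection'' cannot make the components at positions $t\neq s_{0}$ trivial: $\times$ and $\wedge$ only juxtapose or shrink components, whereas you must \emph{enlarge} the $t$-component from $\rho_{s_{0}}$ to $A^{m}$. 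The missing ingredient is existential quantification, i.e.\ the operation $\pr$: one can take the componentwise pp\hyp{}definition $\exists y_{1}\dots\exists y_{m}\,[\sigma(y_{1},\dots,y_{m})\wedge\bigwedge_{i}\eta^{s_{0}}(x_{i},y_{i})]$ with $\sigma=\rho_{s_{0}}$, which yields $\sigma$ at $s_{0}$ and $A^{m}$ elsewhere --- but only when $\sigma\neq\emptyset$, so the empty components need a separate argument (e.g.\ via $\pr(\eta^{s_{0}}\wedge(\mathord{\neq})_{t\in S})$ followed by an $m$-fold Cartesian power). With these repairs your constructive proof goes through, but as written the isolation step fails and the proof is incomplete; the paper's Galois argument is both shorter and avoids all of these edge cases.
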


\begin{proof} 
  \ref{H0-1}: It is known (cf., e.g.,~\cite[5.1.4]{PoeK79}), that $m$
  is a so-called Sheffer function, i.e., the clone generated by $m$ is
  the full clone of all operations. Thus $\SSg{m}$ contains \emph{all}
  functions $f\in\SOp(A)$ with $\Sgn(f)=\{e\}$. Let $g\in\SOp(A)$ with
  $\sgn(g)=(s_{1},\dots,s_{n})$ and let $f\in\SOp(A)$ be given by
  $f(x_{1},\dots,x_{n}):=g(x_{1},\dots,x_{x})$ (i.e.,
  $\mathring f=\mathring g$) and $\sgn(f)=(e,\dots,e)$. Then
  $g^{\lambda}(x_{1},\dots,x_{n})=f(\id^{s_{1}}(x_{1}),\dots,\id^{s_{n}}(x_{n}))$
  (cf.\ Definition~\ref{C2}\ref{C2-6}), thus
  $g \in \SSg{\{f\} \cup \{\id^{s_{1}},\dots,\id^{s_{n}}\}} \subseteq
  \SSg{\{m\} \cup \{ \, \id^{s} \mid s \in S \, \}}$.
 Since $g$ was
  chosen arbitrarily, we are done.

  \ref{H0-2}: It is known (cf., e.g., \cite[1.1.22]{PoeK79}), that for
  $k\ge 3$, $\mathring f\in\Pol\{\leq,\neq\}$ implies that
  $\mathring f$ is a projection (because
  $\Rg[A]{\leq,\neq}=\Rel(A)$). In case $k=2$ one must take a ternary
  relation $\sigma$ (see \cite[5.4.5]{PoeK79}). Thus, because of
  Lemma~\ref{D2}\ref{D2a},
  $f\in\SPol\{(\leq,\leq,\dots,\leq),(\neq,\neq,\dots,\neq)\}$ (or
  $f\in\SPol(\sigma,\sigma,\dots,\sigma)$ for $k=2$) implies that
  $\mathring f$ is a projection. From
  $f\Spreserves (\Delta,\nabla,\dots,\nabla)$ we can conclude that all
  arguments with signum $s\neq e$ must be fictitious, i.e., $f$ is a
  trivial projection ($\in\SJ_{A}$). Consequently, by
  Theorem~\ref{Thm-II}, the $S$\hyp{}relational clone generated by the
  above relations equals
  $\SInv \SJ_{A} \stackrel{\ref{D1a}}{=} \SInv\SPol\SRel(A)=\SRel(A)$.

  \ref{H0-3}: It is well known from universal algebra that the
  subalgebra lattice of a finitely generated algebra is coatomic
  (cf.~\cite[proof of 3.1.5]{PoeK79}). Thus it follows from \ref{H0-1}
  that $\SL_{A}$ (as lattice of sub-$S$\hyp{}preclones of $\SOp(A)$)
  is coatomic. Analogously, by \ref{H0-2}, the lattice of all
  $S$\hyp{}relational clones is coatomic. However, due to the Galois
  connection $\SPol$--$\SInv$ and Theorems~\ref{Thm-I} and
  \ref{Thm-II}, the latter lattice is dual to $\SL_{A}$, consequently
  $\SL_{A}$ is atomic.
\end{proof}

\begin{proposition}\label{H0A}
  There are finitely many maximal and finitely many minimal
  $S$\hyp{}preclones in $\SL_{A}$.
\end{proposition}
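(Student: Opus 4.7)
The plan is to prove the two finiteness statements by Galois-theoretic reduction combined with explicit arity bounds, ultimately relying on the classical finiteness theorems for maximal and minimal clones.

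For the maximal $S$-preclones, I would first invoke Galois duality (Theorems \ref{Thm-I} and \ref{Thm-II}), which tells us that $\SL_{A}$ is dually isomorphic to the lattice of $S$-relational clones on $A$. Thus maximal $S$-preclones correspond bijectively to the atoms of the lattice of $S$-relational clones above $\SD_{A}$; each such atom $R$ is generated by any single $S$-relation $\rho \in R \setminus \SD_{A}$, giving the corresponding maximal $S$-preclone as $\SPol \{\rho\}$. The task then reduces to bounding the arity of such a generator $\rho$ by a function of $|A|$ and $|S|$, since there are only finitely many $S$-relations on $A$ of bounded arity. For this bound, I would exploit Proposition \ref{D8}: for each $s \in S$, the $s$-signed part $M^{[s]}$ of a maximal $S$-preclone $M$ is a minion (a clone when $s=e$) described as the polymorphism class of certain relation pairs derived from $\SInv M$. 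Classical Rosenberg-type finiteness (every maximal clone, and every maximal locally closed minion on a finite set, is Galois-closed by bounded-arity relations) applied componentwise and then combined across $s \in S$ yields a bounded-arity $S$-relation determining $M$.

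For the minimal $S$-preclones, each is of the form $\SSg{f}$ for a single non-trivial $f \in \SOp(A) \setminus \SJ_{A}$. The plan is to show that the generator $f$ may be chosen of arity bounded in terms of $|A|$ and $|S|$. Starting from a generator $f \in M \setminus \SJ_{A}$ of minimum arity, I would apply the basic $S$-preclone operations $\tau$, $\zeta$, $\Delta$, $\nabla^{s}$ of Definition \ref{C2} to attempt arity-reducing transformations: identifying arguments of the same signum (via $\Delta$ after suitable cyclic and transposition shifts) and eliminating fictitious arguments. If any such transformation produces a non-trivial $S$-operation of strictly smaller arity, this contradicts the minimality of $\ar(f)$. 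The remaining case forces $f$ to behave as a \emph{signum-respecting semiprojection}: every allowable identification of two equal-signa arguments yields a trivial $S$-operation. A signum-aware adaptation of Świerczkowski's theorem (or of the Rosenberg--Szabó classification of minimal clones) then bounds the arity of such semiprojections by an explicit function $\phi(|A|,|S|)$, giving the desired finiteness.

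The main obstacle in both parts is establishing the concrete arity bounds. Even for the trivial monoid $S=\{e\}$ (where $S$-preclones reduce to ordinary clones) these rest on substantial machinery --- Rosenberg's classification of maximal clones and the Rosenberg--Szabó classification of minimal clones --- and the extension to general monoids $S$ requires careful bookkeeping to handle the signa under the composition law of $S$. Here Proposition \ref{D8} provides the essential bridge back to the classical theory, by expressing the signed parts $M^{[s]}$ as ordinary polymorphism minions of relation pairs; one may therefore handle each $s \in S$ by the classical bounds and then combine the results into a single $S$-relation (for the maximal case) or a single $S$-operation (for the minimal case) of bounded arity, whence finiteness follows from the finiteness of $A$ and $S$.
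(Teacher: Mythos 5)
Your overall strategy---show that every maximal $S$\hyp{}preclone is $\SPol\rho$ for an $S$\hyp{}relation $\rho$ of bounded arity, and every minimal one is $\SSg{f'}$ for an $S$\hyp{}operation $f'$ of bounded arity---is exactly the right shape, but both of the mechanisms you propose for obtaining the bounds have genuine gaps. For the maximal case, the reduction via Proposition~\ref{D8} does not work as stated: the signed parts $F^{[s]}$ contain only the $S$\hyp{}operations \emph{all} of whose arguments carry the signum $s$, so the family $(F^{[s]})_{s\in S}$ loses every operation of mixed signum and does not determine $F$; moreover, even when $F$ is a maximal $S$\hyp{}preclone there is no reason for any $F^{[s]}$ to be a maximal minion, and the ``classical bounded\hyp{}arity'' statement you invoke for maximal locally closed minions is not an established result you can cite. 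The paper's argument is far more elementary and bypasses all of this: since $\SOp(A)$ is generated by $m^{(e,e)}$ together with the $\id^{s}$ (Proposition~\ref{H0}\ref{H0-1}), a proper maximal $F$ must fail to contain one of these generators, so by Proposition~\ref{D6X} one of the canonical invariants $\Gamma_{F}(\chi^{(e,e)})$, $\Gamma_{F}(\chi^{(s)})$ is not an $S$\hyp{}diagonal; that relation has arity $|A|^{2}$ or $|A|$ and, by maximality, $F=\SPol\rho^{(s_i)}$. No Rosenberg\hyp{}type classification is needed.

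For the minimal case, your arity\hyp{}reduction by identifying same\hyp{}signum arguments is the right move, but the appeal to a ``signum\hyp{}aware adaptation of \'Swierczkowski'' is an unproved assertion of a substantial new theorem, and it is not the missing ingredient. What actually makes the reduction terminate with a \emph{nontrivial} operation of bounded arity is the finite generation of $\SRel(A)$ by \emph{binary} $S$\hyp{}relations (Proposition~\ref{H0}\ref{H0-2}): any $f\in F\setminus\SJ_{A}$ must fail to preserve one of these binary generators $\rho$ (otherwise $f\in\SPol\SRel(A)=\SJ_{A}$ by Proposition~\ref{D1a}), and a witnessing failure $f(r_{1},\dots,r_{n})\notin\rho_{s}$ with $r_{i}\in\rho_{s_{i}s}$ involves at most $|A^{2}|\cdot|S|$ distinct pairs $(r_{i},s_{i})$; identifying arguments with equal $r_{i}$ and equal signum produces $f'\in\SSg{f}=F$ of arity at most $|A^{2}|\cdot|S|$ that still violates $\rho$, hence is nontrivial. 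Without this (or some substitute guaranteeing that the reduced operation stays outside $\SJ_{A}$), your argument only shows that a minimum\hyp{}arity generator admits no further reductions, which by itself gives no numerical bound.
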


\begin{proof}
  (a) Let $F\subsetneq\SOp(A)$ be a maximal $S$\hyp{}preclone
  (coatom). We consider the following $S$\hyp{}relations $\rho^{(s)}$,
  $s\in S$: $\rho^{(e)}:=\Gamma_{F}(\chi^{(e,e)})$ and
  $\rho^{(s)}:=\Gamma_{F}(\chi^{(s)})$ for $s\in S\setminus\{e\}$. If
  all these $\rho^{(s)}$ were $S$\hyp{}diagonals, then the functions
  $m^{(e,e)}$ and $\id^{s}$ (see Proposition~\ref{H0}\ref{H0-1}) would
  belong to $F$ by Proposition~\ref{D6X}, i.e., $F$ would contain a
  generating system for $\SOp(A)$, which contradicts the assumption
  $F\subsetneq \SOp(A)$. Thus at least one of the $\rho^{(s)}$, say
  $\rho^{(s_{i})}$, is nontrivial and we have
  $F\subseteq \SPol\SInv
  F\subseteq\SPol\rho^{(s_{i})}\subsetneq\SOp(A)$. By the maximality
  of $F$ we get $F=\SPol\rho^{(s_{i})}$. This means that each maximal
  $S$\hyp{}preclone is determined by an $S$\hyp{}relation of bounded
  arity (more precisely, it is the arity of $\chi^{(e,e)}$ or
  $\chi^{(s)}$, i.e., $|A|^{2}$ or $|A|$). But there exist only
  finitely many $S$\hyp{}relations of fixed arity. Consequently there
  are only finitely many maximal $S$\hyp{}preclones.

  (b) Let $F$ be a minimal $S$\hyp{}preclone. Then $F$ is generated by
  each nontrivial $S$\hyp{}operation $f\in F\setminus \SJ_{A}$. We are
  going to show that there is always a nontrivial function $f'$ in $F$
  with arity at most $m:=|A^{2}|\cdot|S|$. Since there are only
  finitely many functions $f'$ of fixed bounded arity, there are also
  only finitely many minimal $S$\hyp{}preclones (because they are of
  the form $F=\SSg{f'}$), and we are done.

  Let $f\in F\setminus \SJ_{A}$, $\sgn(f)=(s_{1},\dots,s_{n})$. Then
  $f$ does not preserve at least one of the three binary relations
  (denoted by $\rho$ here) in Proposition~\ref{H0}\ref{H0-2}
  generating $\SRel(A)$ (otherwise
  $f\in\SPol\SRel(A) \stackrel{\ref{D1a}}{=} \SJ_{A}$ is trivial). The
  condition $f\notpreserves\rho$ means that there must exist $s\in S$
  and $r_{1}\in\rho_{s_{1}s},\dots,r_{n}\in\rho_{s_{n}s}$ such that
  $f(r_{1},\dots,r_{n})\notin \rho_{s}$. Because $\rho$ is binary,
  $|\rho_{t}|\leq |A^{2}|$ for all $t\in S$, i.e., there exist at most
  $|A^{2}|\cdot|S|$ different elements for $r_{1},\dots,r_{n}$
  ($|A^{2}|$ for each signum $t\in S$). Thus, if $n> m$, one can
  identify arguments in $f(x_{1},\dots,x_{n})$ (namely $x_{i}$ with
  $x_{j}$ if $r_{i}=r_{j}$ and $s_{i}=s_{j}$) and gets a function
  $f'\in \SSg{f}=F$ of arity at most $m$ which still does not preserve
  $\rho$, i.e., $f'\notin \SJ_{A}$ is nontrivial, which was to be
  shown.
\end{proof}

Concerning maximal $S$\hyp{}preclones in $\SL_{A}$ we have the
following characterization.
\begin{proposition}\label{H4}
  Each maximal $S$\hyp{}preclone $F\le\SOp(A)$ can be characterized as
  $F=\SPol \rho$ for some $\rho\in\SRel(A)$ such that we have
  $(^{*})_{s}$ for each $s\in S$, where
  \begin{center}
    $(^{*})_{s}:\iff$ \fbox{\begin{minipage}[c]{0.8\linewidth}
        $\rho_{s}$ is a diagonal relation, or \\
        $\Pol\rho_{s}$ is a maximal clone and
        $\ar(\rho)=\ar(\rho_{s})$ is the minimal arity of a nontrivial
        relation in $[\rho_{s}]=\Inv\Pol\rho_{s}$.
      \end{minipage}}
  \end{center}
\end{proposition}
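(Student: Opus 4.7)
The plan is to prove the characterization by selecting $\rho \in \SInv F \setminus \SD_A$ of minimum arity, and then verifying $(*)_s$ componentwise, refining the choice of $\rho$ if the maximality-of-$\Pol$ requirement fails at some coordinate. Setup: since $F \subsetneq \SOp(A)$ is maximal, Proposition~\ref{D1a} together with Theorem~\ref{Thm-II} gives $\SInv F \supsetneq \SInv \SOp(A) = \SD_A$, and for every $\rho \in \SInv F \setminus \SD_A$ we have $F \subseteq \SPol \rho \subsetneq \SOp(A)$, so by maximality of $F$ we obtain $F = \SPol \rho$. I choose such a $\rho$ of minimum arity $m := \ar(\rho)$, and then check $(*)_s$ at each non-diagonal component.

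The minimum-arity half of $(*)_s$ follows by a componentwise argument. Fix $s \in S$ with $\rho_s \notin D_A$; if there were $\sigma \in [\rho_s] \setminus D_A$ with $\ar(\sigma) < m$, then $\sigma$ is obtained from $\rho_s$ by a finite sequence of the elementary operations $\zeta, \tau, \pr, \times, \wedge$ together with diagonals from $D_A$. Applying the componentwise $S$-relational analogues from Definition~\ref{C4} to $\rho$ (using the $S$-diagonals $(\delta)_{t \in S} \in \SD_A$ in place of each $\delta \in D_A$) yields $\tau \in \SRg{\rho} \subseteq \SInv F$ with $\tau_s = \sigma$ and $\ar(\tau) = \ar(\sigma) < m$; since $\tau_s \notin D_A$, we have $\tau \notin \SD_A$, contradicting minimality of $m$. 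Hence $\ar(\rho_s) = m$ is the minimum arity of a nontrivial relation in $[\rho_s]$.

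For the remaining condition -- that $\Pol \rho_s$ is a maximal clone when $\rho_s \notin D_A$ -- I iteratively refine $\rho$. If $\Pol \rho_s$ is not maximal, classical theory (Rosenberg's characterisation) supplies a relation $\sigma_R \in [\rho_s]$ with $\Pol \sigma_R$ a maximal clone properly containing $\Pol \rho_s$; a componentwise construction analogous to the one above produces $\tau \in \SInv F$ with $\tau_s = \sigma_R$, so by maximality $F = \SPol \tau$, and $\Pol \tau_s$ is maximal. Repeating over all non-diagonal coordinates, controlled by a secondary monotone measure such as the number of coordinates at which the polymorphism clone fails to be maximal, ultimately yields a $\rho$ satisfying $(*)_s$ for every $s \in S$. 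The principal obstacle is this last step: one must ensure simultaneously that a Rosenberg-type $\sigma_R \in [\rho_s]$ of arity exactly $m$ can be chosen (otherwise the componentwise construction would increase the arity), and that the refinement at coordinate $s$ does not undo previously achieved properties at other coordinates. Both are handled by exploiting the monoid structure of $S$ through the index-translation $\mu_v$ and self-intersection $\sqcap^v$ operations from Definition~\ref{C4}, together with the classical relational characterisation of maximal clones on the finite base set $A$.
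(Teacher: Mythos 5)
Your setup and your first step are sound and essentially match the paper's: maximality of $F$ forces $F=\SPol\rho$ for \emph{every} $\rho\in\SInv F\setminus\SD_{A}$, and applying a pp\hyp{}construction $t_{\phi}$ componentwise (with the constant $S$\hyp{}diagonals $(\delta)_{t\in S}$ substituted for ordinary diagonals) to an invariant $S$\hyp{}relation yields again an invariant $S$\hyp{}relation, which is nontrivial as soon as one component is. Your derivation of the minimal\hyp{}arity half of $(^{*})_{s}$ from a globally arity\hyp{}minimal choice of $\rho$ is a correct variant of what the paper does (the paper instead obtains the arity condition locally, by choosing a minimal\hyp{}arity generator at each refinement step).

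The gap is in the last step, and you have in fact named it yourself without closing it. The two obstacles you list --- producing a Rosenberg\hyp{}type witness $\sigma_R\in[\rho_{s}]$ at arity exactly $m$, and ensuring the refinement at one coordinate does not destroy $(^{*})_{s'}$ at coordinates already treated --- are the real content of the proof, and the sentence claiming they are ``handled by exploiting the monoid structure of $S$ through $\mu_{v}$ and $\sqcap^{v}$'' is not an argument: those operations only translate and intersect the components of an $S$\hyp{}relation along the index set $S$; they cannot change $[\rho_{s}]$, the arity of a witness, or the maximality of $\Pol\rho_{s}$. The mechanism the paper actually uses is the missing idea: at each coordinate one descends all the way to a \emph{minimal relational clone} $[\sigma_{s}]\subseteq[\rho_{s}]$ (on a finite set this is exactly dual to ``$\Pol\sigma_{s}$ is a maximal clone'', since $[\sigma_{s}]=\Inv\Pol\sigma_{s}$) and chooses $\sigma_{s}$ of minimal arity inside $[\sigma_{s}]$. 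Minimality of $[\sigma_{s}]$ is what makes $(^{*})_{s}$ robust under later refinements: when a subsequent pp\hyp{}construction $t_{\phi'}$ is applied, the new $s$\hyp{}component $t_{\phi'}(\rho'_{s})$ lies in $[\sigma_{s}]$ and is therefore either a diagonal relation or another generator of the same minimal relational clone, and it must be diagonal whenever its arity drops below $\ar(\sigma_{s})$; hence $(^{*})_{s}$ survives. Your proposed ``secondary monotone measure'' only becomes monotone once this minimality is in place, and the arity bookkeeping is done by tracking the minimal arity of a generator of $[\sigma_{s}]$ rather than by fixing $m$ in advance. Without such a replacement for the appeal to $\mu_{v}$ and $\sqcap^{v}$, the iteration is not shown to terminate with all conditions $(^{*})_{s}$ holding simultaneously.
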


\begin{proof} Let $F\in \SL_{A}$ be a maximal $S$\hyp{}preclone and
  let $\rho\in\SInv F\setminus \SD_{A}$. Then
  $F\subseteq\SPol\SInv F\subseteq\SPol\rho\subsetneq\SOp(A)$, thus,
  by the maximality of $F$, $F=\SPol\rho$ for each nontrivial
  $S$\hyp{}relation $\rho$ in $\SInv F$.

  Let $F=\SPol\rho$ be maximal and assume $\rho_{s}\notin D_{A}$ (not
  diagonal, not empty) for some $s\in S$. If $(^{*})_{s}$ is not
  satisfied, then (since $\rho_{s}$ is nontrivial) there exists a
  minimal relational clone $[\sigma_{s}]\subseteq[\rho_{s}]$ where we
  choose $\sigma_{s}$ such that $\ar(\sigma_{s})$ is minimal. Thus
  there exists a ``construction'' $t_{\phi}$ (logical operation with
  pp\hyp{}formula $\phi$) such that
  $\sigma_{s}=t_{\phi}(\rho_{s})$. Thus $\rho':=t_{\phi}(\rho)$ is a
  nontrivial $S$\hyp{}relation with $\rho'_{s}=\sigma_{s}$. Moreover
  we have $\rho'\in \SInv F$ (by Lemma~\ref{D3A}), consequently
  $F=\SPol\rho'$ as well. By construction, $\rho'_{s}$ satisfies
  $(^{*})_{s}$. Clearly $\ar(\rho')\leq\ar(\rho)$.

  If $\rho'$ does not satisfy $(^{*})_{s}$ for all $s\in S$, one
  chooses the next $s'\in S$ with nontrivial $\rho'_{s'}$ and repeats
  the above procedure getting $\rho''=t_{\phi'}(\rho')$ with some
  $t_{\phi'}$ such that $t_{\phi'}(\rho'_{s'})=\sigma'_{s'}$ with
  minimal relational clone $[\sigma'_{s'}]$ (and minimal arity
  $\ar(\sigma'_{s'})=\ar(\rho'')\leq\ar(\rho')$). Note that
  $\sigma'_{s}:=t_{\phi'}(\rho'_{s})\in[\rho'_{s}]=[\sigma_{s}]$ and
  therefore either $\sigma'_{s}\in D_{A}$ (this, in particular, is the
  case if $\ar(\rho'')<\ar(\sigma_{s})=\ar(\rho')$) or
  $[\sigma'_{s}]=[\sigma_{s}]$ is minimal (by minimality of
  $[\sigma_{s}]$). Consequently $\rho'':=t_{\phi'}(\rho')$ satisfies
  both $(^{*})_{s}$ and $(^{*})_{s'}$.

  One can continue this until one gets an $S$\hyp{}relation $\omega$
  with $F=\SPol\omega$ satisfying $(^{*})_{s}$ for each $s\in S$
  (i.e., that each $\omega_{s}$ is trivial or $\Pol\omega_{s}$ is a
  maximal clone and $\ar(\omega)$ is the minimal arity of a nontrivial
  relation in $\Rg{\omega_{s}}$).
\end{proof}

\begin{remark}\label{H4a}
  The maximal clones that appear in the conditions $(^{*})_{s}$ are
  known from the classical result of I. Rosenberg in
  \cite{Ros70}. Therefore Proposition~\ref{H4} provides useful
  candidates for determining all maximal $S$\hyp{}preclones. It
  ``only'' remains the task to exclude those $\rho$ which do not give
  a maximal $S$\hyp{}preclone $F=\SPol\rho$ (such as, trivially, all
  diagonal $S$\hyp{}relations). This we shall do in Part~II for
  Boolean $S$\hyp{}preclones (there exist 9 maximal Boolean
  $S$\hyp{}preclones for the two\hyp{}element group $S$).
\end{remark}

At the end of this section we shall deal with the complexity of
$\SL_{A}$ versus $\cL_{A}$ and present two (nearly trivial) embeddings
of the clone lattice $\cL_{A}$ into the $S$\hyp{}preclone lattice
$\SL_{A}$. Moreover we generalize symmetries of $\cL_{A}$ (inner
automorphisms) to $\SL_{A}$.

\begin{proposition}\label{H7} We define
  the mappings $\Psi,\Phi:\cL_{A}\to \SL_{A}$ as follows \textup{(}for
  $F\in\cL_{A}$\textup{)}:
  \begin{align*}
    \Psi(F) &:= \SSg{\{ \, f \in \SOp(A) \mid \mathring{f} \in F \text{ and
              } \Sgn(f) = \{e\} \, \}},\\
    \Phi(F) &:= \{ \, f \in \SOp(A) \mid \mathring{f} \in F \, \}.\
  \end{align*}
  Then $\Psi$ is a lattice embedding into the interval
  $\Int[\SL_{A}]{\SJ_{A},\Psi(\Op(A))}$ in $\SL_{A}$, i.e., into the
  principal ideal generated by $\Psi(\Op(A))$ in $\SL_{A}$, and $\Phi$
  is a lattice embedding into the interval
  $\Int[\SL_{A}]{\Phi(J_{A}),\SOp(A)}$, i.e., into the principal
  filter generated by $\Phi(J_{A})$ in $\SL_{A}$ \textup{(}where
  $\Phi(J_{A}) = \SSg{\{ \, \id^{s} \mid s \in S \,\}}$\textup{)}. If
  $S$ is a group then $\Phi$ is an embedding \textsl{onto} the
  principal filter generated by
  $\SSg{\{ \, \id^{s} \mid s \in S \, \}}$ in $\SL_{A}$.
\end{proposition}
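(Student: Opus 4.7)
The plan is to treat $\Phi$ and $\Psi$ separately: in each case I want to check that the map produces an $S$\hyp{}preclone in the claimed interval and that it is order\hyp{}preserving, injective, and preserves finite meets and joins; afterwards I handle the surjectivity of $\Phi$ onto the filter when $S$ is a group.

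For $\Phi$, I would first verify that $\Phi(F) = \{\, f \in \SOp(A) \mid \mathring{f} \in F \,\}$ is an $S$\hyp{}preclone: each of the operations $\zeta, \tau, \nabla^{s}, \Delta, \circ$ from Definition~\ref{C2} acts on the underlying function by an operation a clone is closed under (permutation, adding a fictitious variable, identification, composition), so $\mathring{f} \in F$ is preserved. Injectivity is immediate from $F = \{\, \mathring{f} \mid f \in \Phi(F)\,\}$, order-preservation is clear, and meet preservation is pointwise. For join preservation, given $h \in \Sg{F \cup G}$ I would write a clone term for $h$ over $F \cup G$ and read it as an $S$\hyp{}preclone term with constant signum $(e,\dots,e)$, placing $h^{(e,\dots,e)}$ in $\SSg{\Phi(F) \cup \Phi(G)}$; then for an arbitrary signum $\lambda = (s_{1},\dots,s_{n})$, composing with the unary operations $\id^{s_{i}} \in \Phi(J_{A}) \subseteq \Phi(F) \cap \Phi(G)$ converts the signum into $\lambda$ and yields $h^{\lambda} \in \Phi(F) \vee \Phi(G)$. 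For the surjectivity when $S$ is a group, given $H \in \SL_{A}$ with $H \supseteq \Phi(J_{A})$, I would set $F := \{\, \mathring{h} \mid h \in H \,\}$ (verifying that closure of $H$ under $S$\hyp{}preclone operations translates to closure of $F$ under clone operations), check $H \subseteq \Phi(F)$ by construction, and for the reverse inclusion, given $g \in \Phi(F)$ with $\sgn(g) = (s_{1},\dots,s_{n})$, pick $f \in H$ with $\mathring{f} = \mathring{g}$ and $\sgn(f) = (s'_{1},\dots,s'_{n})$, then compose $f$ successively with $\id^{(s'_{i})^{-1}}$ (which exist because $S$ is a group) to reduce the signum to $(e,\dots,e)$, and then with $\id^{s_{i}}$ to install $\lambda$, producing $g \in H$. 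The group hypothesis is used exactly at the inversion step.

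For $\Psi$ the central preparatory step is the explicit characterization
\[
    \Psi(F) = \{\, f \in \SOp(A) \mid \mathring{f} \in F \text{ and every argument of } f \text{ with signum } \ne e \text{ is fictitious}\,\}.
\]
The inclusion $\subseteq$ is a structural induction over the $S$\hyp{}preclone operations applied to the generators $\{\, g^{(e,\dots,e)} \mid g \in F\,\}$; the delicate case is composition $f \circ g$: if $\sgn(f) = (s_{1},\dots,s_{n})$ has $s_{1} \ne e$, then by the induction hypothesis the first argument of $\mathring{f}$ is fictitious, so the output of $g$ is discarded and all inherited arguments of $f \circ g$ (carrying signa $s'_{j}s_{1}$) are fictitious, while the remaining positions are handled by the inductive hypothesis applied to $f$ and $g$ separately. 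The inclusion $\supseteq$ is constructive: given such an $f$, restrict $\mathring{f}$ to its essential arguments to obtain some $g \in F$, lift to $g^{(e,\dots,e)} \in \Psi(F)$, and reinsert the fictitious arguments with the prescribed signa via $\nabla^{s}$ and permutations. With this characterization in hand, injectivity follows from $\Psi(F) \cap \{\, f \mid \sgn(f) = (e,\dots,e) \,\} = \{\, g^{(e,\dots,e)} \mid g \in F\,\}$, which recovers $F$; meet preservation is pointwise (the ``non-$e$-signum is fictitious'' condition depends only on $f$); and join preservation reduces, as for $\Phi$, to writing $h^{(e,\dots,e)}$ for $h \in F \vee G$ as a clone term within the all-$e$\hyp{}signum part of $\SSg{\Psi(F) \cup \Psi(G)}$.

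The main obstacle I anticipate is the $\subseteq$ direction of the characterization of $\Psi(F)$: the invariant ``every non-$e$\hyp{}signum argument is fictitious'' must be shown to be preserved by composition, where the signum\hyp{}multiplication rule $\sgn(f \circ g) = (s'_{1}s_{1},\dots,s'_{m}s_{1}, s_{2},\dots,s_{n})$ can turn non-$e$ factors into $e$ products (when $s'_{j}s_{1} = e$) and thus appears to threaten the invariant. The resolution is that once $s_{1} \ne e$ the fictitiousness of the first argument of $f$ forces the composite to ignore $g$ entirely, so the inherited arguments remain fictitious regardless of what signum $s'_{j}s_{1}$ they end up carrying. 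Everything else is routine lattice\hyp{}theoretic bookkeeping.
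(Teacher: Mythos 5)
Your treatment of $\Phi$ is correct and follows essentially the paper's route, with a welcome extra level of detail: the join\hyp{}preservation argument (passing through $h^{(e,\dots,e)}$ via the fact that the $e$\hyp{}signed part of an $S$\hyp{}preclone is a clone, then installing the signum with the $\id^{s_i}$) and the surjectivity argument for groups both match the paper's signum\hyp{}conversion trick.

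The $\Psi$ part, however, rests on a characterization that is false as stated. Every element of $\Psi(F)=\SSg{\{\, f \in \SOp(A) \mid \mathring f\in F,\ \Sgn(f)=\{e\}\,\}}$ retains at least one argument of signum $e$: the generators have all signa equal to $e$, and an easy induction shows this is preserved by $\zeta,\tau,\nabla^{s},\Delta$ and $\circ$ (for $\circ$, if the only $e$ of $f$ sits in the first position then the inherited signa are $s'_j s_1 = s'_j$ and an $e$ of $g$ survives; otherwise an $e$ of $f$ survives among $s_2,\dots,s_n$). Consequently, if $F$ contains a constant operation $c$ (e.g.\ $F=\Op(A)$) and $s\neq e$, then the unary $S$\hyp{}operation $c^{(s)}$ satisfies your membership condition --- $\mathring{c^{(s)}}=c\in F$ and its single argument, having signum $s\neq e$, is fictitious --- yet $c^{(s)}\notin\Psi(F)$. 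This is exactly where your $\supseteq$ argument breaks down: ``restrict $\mathring f$ to its essential arguments'' produces nothing when $\mathring f$ is constant, and there is no $e$\hyp{}signed argument to fall back on. The repair is to add the clause ``and at least one argument of $f$ has signum $e$'' to the characterization; with that clause the constructive direction works (restrict $\mathring f$ to the $e$\hyp{}signed positions, which contain all essential ones and are nonempty, then reinsert the remaining fictitious arguments via $\nabla^{s}$ and permutations), and since the added clause is independent of $F$, your pointwise argument for meet preservation and your injectivity argument survive unchanged. Your analysis of the composition case of the invariant (the possibility $s'_j s_1=e$ with $s'_j,s_1\neq e$, resolved by the fictitiousness of the first argument of $f$) is correct and is indeed the delicate point; the paper sidesteps all of this by describing $\Psi(F)$ only informally as ``all\hyp{}$e$\hyp{}signed members of $F$ with fictitious arguments of arbitrary signa added'', a description that implicitly carries the missing clause.
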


\begin{proof}
  For a clone $F \in \cL_{A}$, $\Psi(F)$ is obtained by taking the
  functions in $F$, giving all arguments the signum $e$, and then
  adding an arbitrary number of fictitious arguments of arbitrary
  signum in all possible ways. Thus each $S$\hyp{}operation in
  $\Psi(F)$ has essential arguments only for signum $e$, in particular
  we have $(\Psi(F)^{[e]})\,\mathring{}=F$ (note that the $e$-part of
  each $S$\hyp{}preclone is a clone by Proposition~\ref{D8}).
 Thus
  $\Psi$ is injective; moreover $\Psi(J_{A})=\SJ_{A}$.

  Clearly, $\Phi(F)$ is an $S$\hyp{}preclone. For a clone
  $F \in \cL_{A}$, $\Phi(F)$ contains, for each operation $f \in F$,
  every $S$\hyp{}operation $g$ with $\mathring{g}=f$ and arbitrary
  signum. Thus $\Phi(\Op(A))=\SOp(A)$ and
  $\Phi(J_{A}) = \SSg{\{ \, \id^{s} \mid s \in S \, \}}$.
 Note also
  that $(\Phi(F))\,\mathring{}=F$.

  From the definitions it easily follows that $\Psi$ and $\Phi$ are
  lattice embeddings (the image of joins (or meets) are joins (or
  meets) of the images).

  If $S$ is a group, then
  $\{ \, \id^{s} \mid s \in S \, \}\subseteq H$ (for some
  $H\in\SL_{A}$) implies the following property for $H$: If
  $f^{\lambda}\in H$ then $f^{\lambda'}\in H$ for arbitrary signa
  $\lambda=(s_{1},\dots,s_{n})$,
  $\lambda'=(s'_{1},\dots,s'_{n})$. This proves $H=\Phi(F)$ with
  $F = \{ \, \mathring{f} \mid f \in H \, \} \in \cL_{A}$ and we are
  done. In fact, for $s_{i}, s_{i}'$ there exists $u_{i}\in S$ such
  that $s_{i}'=s_{i}u_{i}$ ($i\in\{1,\dots,n\}$). Then
  $f^{\lambda'}(x_{1},\dots,x_{n})=f(\id^{u_{1}}(x_{1}),\dots,\id^{u_{n}}(x_{n}))$
  is a composition of $f$ and the $\id^{s}$ (cf.\
  Definition~\ref{C2}\ref{C2-6}) and therefore belongs to $H$ by the
  assumption on $H$.
\end{proof}

The preceding result contains two embeddings of the lattice of clones
into the lattice of $S$\hyp{}preclones. It is worth noting that
Proposition~\ref{D4} defines an inverse of the map $\Phi$ by mapping
each $S$\hyp{}preclone to the clone generated by all operations in the
$S$\hyp{}preclone, ignoring the signa of these operations.

\begin{definition}\label{H5a}

  Let $\pi:A\to A$ be a permutation on $A$. Then, for $f\in\SOp(A)$,
  $\rho=(\rho_{s})_{s\in S}\in\SRela[m](A)$, $F\subseteq\SOp(A)$ and
  $Q\subseteq\SRel(A)$ we define the \New{$\pi$\hyp{}dual} $f^{\pi}$,
  $\rho^{\pi}=(\rho^{\pi}_{s})_{s\in S}$, $F^{\pi}$ and $Q^{\pi}$ as
  follows:
  \begin{align*}
    & f^{\pi}(x_{1},\dots,x_{n}) :=\pi(f(\pi^{-1}x_{1},\dots,\pi^{-1}x_{n})),
      \quad\sgn(f^{\pi}):=\sgn(f),\\
    & \rho^{\pi}_{s}:=\pi\rho_{s} :=
      \{ \, (\pi a_{1},\dots,\pi a_{m}) \mid
      (a_{1},\dots,a_{m})\in\rho_{s} \, \}, \quad s \in S,\\
    & F^{\pi} := \{ \, f^{\pi} \mid f \in F \, \}, \quad
      Q^{\pi} := \{ \, \rho^{\pi} \mid \rho \in Q \, \}. 
  \end{align*}
  An $S$\hyp{}operation $f$ with $f^{\pi}=f$ and an $S$\hyp{}preclone
  $F$ with $F^{\pi}=F$ are called \New{$\pi$\hyp{}selfdual}.
  (\New{$\pi$\hyp{}selfdual} $S$\hyp{}relations and
  $S$\hyp{}relational clones are defined analogously.)

\end{definition}

\begin{remark}\label{H5}
  The mapping $(-)^{\pi}:\SOp(A)\to \SOp(A): f\mapsto f^{\pi}$ is an
  automorphism, called an \New{inner automorphism}, of the full
  $S$\hyp{}preclone $\SOp(A)$ considered as an algebra equipped with
  the operations $\id_{A},\zeta,\tau,\nabla^{s},\Delta,\circ$ (cf.,
  e.g., \cite{Mal1966}, \cite[3.4.1]{PoeK79}).
\end{remark}
For the classical clone $\Op(A)$, there are only inner automorphisms
(\cite[Theorem~2]{Mal1966}). However, for $S$\hyp{}preclones there are
further automorphisms, which are induced by the automorphisms of the
monoid $S$.

\begin{definition}\label{H5b}
  Let $h:S\to S$ be an automorphism of the monoid $S$. For
  $f\in\SOp(A)$, with $\sgn(f)=(s_{1},\dots,s_{n})$ and
  $\rho\in\SRel(A)$ we define $f^{h}$ and $\rho^{h}$ as follows:
  \begin{align*}
    \sgn(f^{h})&:=h(\sgn(f))=(h(s_{1}),\dots, h(s_{n})), \\
    f^{h}(x_{1},\dots,x_{n})&:=f(x_{1},\dots,x_{n}) \quad \text{ (i.e.,
                              ${\mathring{f}}^{h}:=\mathring{f}$),} \\
    \rho^{h}&:=(\rho_{h^{-1}(s)})_{s\in S}, \\
    \text{equivalently, } \;\; \rho^{h}_{h(s)}&:=\rho_{s} \text{ for }s\in S.
  \end{align*}
  Note that $\rho^{h}$ is well defined because $h$ is bijective.

\end{definition}

\begin{remark}\label{H5c}
  The mapping $(-)^{h}:\SOp(A)\to \SOp(A): f\mapsto f^{h}$ changes
  only the signum of $f$ and not the underlying function
  $\mathring{f}$. It is also an automorphism of the full
  $S$\hyp{}preclone
  $\langle\SOp(A);\id_{A},\zeta,\tau,\nabla^{s},\Delta,\circ\rangle$
  (considered as an algebra). This can be checked easily. We show only
  for $\circ$ that indeed $(f\circ g)^{h}=f^{h}\circ g^{h}$ (we have
  to check the signa only; for the notation, see
  Definition~\ref{C2}\ref{C2-6}):
  \begin{align*}
    \sgn((f\circ g)^{h})&=h(\sgn(f\circ g))=
                          (h(s'_{1}s_{1}),\dots,h(s'_{m}s_{1}),h(s_{2}),\dots,h(s_{n}))\\
                        &=(h(s'_{1})h(s_{1}),\dots,h(s'_{m})h(s_{1}),h(s_{2}),\dots,h(s_{n}))\\
                        &=\sgn(f^{h}\circ g^{h}).
  \end{align*}
\end{remark}

The automorphisms also provide inner symmetries of the lattice
$\SL_{A}$.
\begin{proposition}\label{H5d}
  The mappings $(-)^{\pi}:\SL_{A}\to \SL_{A}: F\mapsto F^{\pi}$ and
  $(-)^{h}:\SL_{A}\to \SL_{A}: F\mapsto F^{h}$ are lattice
  automorphisms.
\end{proposition}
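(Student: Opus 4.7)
My plan is to derive Proposition~\ref{H5d} directly from the observations made in Remarks~\ref{H5} and \ref{H5c}: that $(-)^{\pi}$ and $(-)^{h}$ are automorphisms of the algebra $\mathcal{A} := \langle \SOp(A); \id_{A}, \zeta, \tau, \nabla^{s}, \Delta, \circ \rangle$. By Definition~\ref{C2}, the $S$\hyp{}preclones on $A$ are exactly the subuniverses of $\mathcal{A}$, and any automorphism of a universal algebra induces a lattice automorphism on its subuniverse lattice. So the task is simply to unwind this general fact in the present setting.

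In more detail, I would first check that $(-)^{\pi}$ and $(-)^{h}$ are bijections on $\SOp(A)$. A short computation from Definition~\ref{H5a} gives $(f^{\pi})^{\pi^{-1}} = f = (f^{\pi^{-1}})^{\pi}$, and from Definition~\ref{H5b} one has $(f^{h})^{h^{-1}} = f = (f^{h^{-1}})^{h}$ (the underlying function is unchanged under $(-)^{h}$, and the signum is shifted through $h \circ h^{-1} = \id_{S}$). Next, I would invoke Remarks~\ref{H5} and \ref{H5c}: because $(-)^{\pi}$ and $(-)^{h}$ commute with each of the five generating operations and fix $\id_{A}$, it follows that if $F$ is an $S$\hyp{}preclone then so is $F^{\pi}$ (respectively $F^{h}$). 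Indeed, applying any of the operations $\zeta, \tau, \nabla^{s}, \Delta, \circ$ to elements of $F^{\pi}$ yields the $\pi$\hyp{}dual of a combination of elements of $F$, which lies in $F^{\pi}$; and $\id_{A}^{\pi} = \id_{A} \in F^{\pi}$. The same argument applied to $\pi^{-1}$ (respectively $h^{-1}$) shows that the inverse map sends $\SL_{A}$ into $\SL_{A}$, so both $(-)^{\pi}$ and $(-)^{h}$ are bijections of $\SL_{A}$ onto itself.

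Finally, both maps manifestly preserve inclusion in both directions: $F \subseteq G \iff F^{\pi} \subseteq G^{\pi}$, and analogously for $h$. An inclusion-preserving bijection of lattices whose inverse is also inclusion-preserving automatically preserves arbitrary joins and meets, hence is a lattice automorphism.

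I do not anticipate any substantial obstacle. The only points requiring minor care are the interactions of $(-)^{h}$ with the operations $\nabla^{s}$ and $\Delta$ from Definition~\ref{C2}: one needs $(\nabla^{s} f)^{h} = \nabla^{h(s)} f^{h}$ (which is where the signum shifts), and one must check that the condition ``$s_{1} = s_{2}$'' triggering a non-trivial $\Delta$ is preserved under $h$, which holds because $h$ is injective. Both verifications are immediate once spelled out, so the proof will amount to one paragraph together with these short checks.
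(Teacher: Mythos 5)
Your proposal is correct and rests on the same foundation as the paper's own (very short) proof, namely the observation of Remarks~\ref{H5} and \ref{H5c} that $(-)^{\pi}$ and $(-)^{h}$ are automorphisms of the algebra $\langle\SOp(A);\id_{A},\zeta,\tau,\nabla^{s},\Delta,\circ\rangle$; the paper verifies directly that meets ($=$ intersections) and joins ($=$ generated $S$\hyp{}preclones of unions) are preserved, whereas you phrase it as an order\hyp{}isomorphism argument, which is an equivalent routine variant. Your extra care about $(\nabla^{s}f)^{h}=\nabla^{h(s)}f^{h}$ (intertwining rather than literally commuting, harmless since $h$ permutes $S$) is a sound minor refinement, not a different method.
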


\begin{proof}
  It is easy to check that
  $(F_{1}\land F_{2})^{\pi}=(F_{1}\cap F_{2})^{\pi}= F_{1}^{\pi}\cap
  F_{2}^{\pi}=F_{1}^{\pi}\land F_{2}^{\pi}$ and
  $(F_{1}\lor F_{2})^{\pi}= (\SSg{F_{1}\cup
    F_{2}})^{\pi}=\SSg{(F_{1}\cup F_{2})^{\pi}}= \SSg{F_{1}^{\pi}\cup
    F_{2}^{\pi}}=F_{1}^{\pi}\lor F_{2}^{\pi}$, analogously for $h$
  instead of $\pi$.
\end{proof}

Finally, we show the interplay of the ``duality'' operator $(-)^{\pi}$
and the ``signum permuting'' operator $(-)^{h}$ with the Galois
connection $\SPol$--$\SInv$:

\begin{proposition}\label{H6} Let $\pi$ be a permutation on $A$, let
  $h$ be an automorphism of $S$ and $f\in\SOp(A)$, $\rho\in\SRel(A)$,
  $F\subseteq\SOp(A)$, $Q\subseteq\SRel(A)$. Then we have
  \begin{enumerate}[label=\textup{(\roman*)}]
  \item \label{H6-i}
    $f\Spreserves\rho\iff f^{\pi}\Spreserves\rho^{\pi}$, \qquad
    $f\Spreserves\rho\iff f^{h}\Spreserves\rho^{h}$,
  \item \label{H6-ii} $\SInv F^{\pi}=(\SInv F)^{\pi}$, \quad
    $\SInv F^{h}=(\SInv F)^{h}$,
  \item \label{H6-iii} $\SPol Q^{\pi}=(\SPol Q)^{\pi}$, \quad
    $\SPol Q^{h}=(\SPol Q)^{h}$.
  \end{enumerate}
  
\end{proposition}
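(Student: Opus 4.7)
The plan is to prove (i) directly from the definitions of the two dual operators, and then derive (ii) and (iii) as formal consequences of (i) via the general mechanism of Galois connections.

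For the $\pi$-version of (i), the key computational fact is that componentwise application of $\pi$ intertwines with $f^\pi$: if $r_i \in A^m$ then $f^\pi(\pi r_1, \dots, \pi r_n) = \pi f(r_1, \dots, r_n)$, which is immediate from $f^\pi(x_1, \dots, x_n) = \pi(f(\pi^{-1}x_1, \dots, \pi^{-1}x_n))$. Combined with $\rho^\pi_s = \pi\rho_s$ (and the fact that $\sgn(f^\pi) = \sgn(f)$), the inclusion $f^\pi(\rho^\pi_{s_1s}, \dots, \rho^\pi_{s_ns}) \subseteq \rho^\pi_s$ holds iff $\pi f(\rho_{s_1s}, \dots, \rho_{s_ns}) \subseteq \pi\rho_s$ iff $f(\rho_{s_1s}, \dots, \rho_{s_ns}) \subseteq \rho_s$ (using that $\pi$ is a bijection). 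Running this for every $s \in S$ yields $f \Spreserves \rho \iff f^\pi \Spreserves \rho^\pi$.

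For the $h$-version of (i), the underlying function does not change, $\mathring{f^h} = \mathring f$, but $\sgn(f^h) = (h(s_1), \dots, h(s_n))$ and $\rho^h_{h(t)} = \rho_t$. Since $h$ is an automorphism of the monoid $S$, parametrising $s \in S$ as $s = h(t)$ and using $h(s_i)h(t) = h(s_it)$, the condition $\mathring f(\rho^h_{h(s_1)\,h(t)}, \dots, \rho^h_{h(s_n)\,h(t)}) \subseteq \rho^h_{h(t)}$ rewrites to $\mathring f(\rho_{s_1t}, \dots, \rho_{s_nt}) \subseteq \rho_t$. Quantifying over all $t$ (equivalently over all $s$) gives the equivalence.

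For (ii), note that $(-)^\pi$ and $(-)^h$ are bijections on $\SRel(A)$ with inverses $(-)^{\pi^{-1}}$ and $(-)^{h^{-1}}$ respectively, so $(\SInv F)^\pi = \{\sigma^\pi \mid \sigma \in \SInv F\}$ and similarly for $h$. Take $\rho \in \SInv F^\pi$ and write $\rho = \sigma^\pi$ with $\sigma = \rho^{\pi^{-1}}$; then by (i), $g^\pi \Spreserves \sigma^\pi \iff g \Spreserves \sigma$, so the condition ``$g^\pi \Spreserves \rho$ for all $g \in F$'' is equivalent to ``$g \Spreserves \sigma$ for all $g \in F$'', i.e.\ $\sigma \in \SInv F$, i.e.\ $\rho \in (\SInv F)^\pi$. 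The $h$-case is identical with $\pi$ replaced by $h$. Part (iii) is symmetric: $f \in \SPol Q^\pi$ iff $f \Spreserves \sigma^\pi$ for all $\sigma \in Q$, which by (i) is iff $f^{\pi^{-1}} \Spreserves \sigma$ for all $\sigma \in Q$, i.e.\ $f^{\pi^{-1}} \in \SPol Q$, i.e.\ $f \in (\SPol Q)^\pi$; the $h$-case is analogous. There is no real obstacle here — all work is in (i), and (ii), (iii) are mechanical transcriptions of the standard ``Galois connection respects a bijection'' pattern; one only has to be careful to track the monoid-homomorphism property of $h$ and the invertibility of $\pi$ and $h$.
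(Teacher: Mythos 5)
Your proposal is correct and follows essentially the same route as the paper: part (i) is verified by the same direct computation (intertwining $\pi$ with $f^{\pi}$, and reparametrising $s=h(t)$ using that $h$ is a monoid automorphism), and parts (ii), (iii) are then derived as formal consequences of (i), which the paper states without elaboration. Your explicit unwinding of (ii) and (iii) via the inverses $(-)^{\pi^{-1}}$ and $(-)^{h^{-1}}$ is a harmless (and correct) expansion of the paper's ``follow directly from (i)''.
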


\begin{proof}
  \ref{H6-i}: Let $\sgn(f)=(s_{1},\dots,s_{n})$ and $s\in S$. Then
  \begin{align*}
    f(\rho_{s_{1}s},\dots,\rho_{s_{n}s})\subseteq \rho_{s} 
    &\iff 
      \pi f(\pi^{-1}\pi\rho_{s_{1}s},\dots,\pi^{-1}\pi\rho_{s_{n}s})\subseteq
      \pi\rho_{s}\\ 
    &\iff 
      \pi f(\pi^{-1}\rho_{s_{1}s}^{\pi},\dots,\pi^{-1}\rho_{s_{n}s}^{\pi})\subseteq
      \rho_{s}^{\pi}\\
    &\iff f^{\pi}(\rho_{s_{1}s}^{\pi},\dots,\rho_{s_{n}s}^{\pi})\subseteq \rho_{s}^{\pi}.
  \end{align*}
  According to \ref{D1}\eqref{D1i} we get the first part of
  \ref{H6-i}.

  Furthermore, with
  $(t_{1},\dots,t_{n}):=(h(s_{1}),\dots,h(s_{n}))=\sgn(f^{h})$ and
  $t:=h(s)$ we have (according to the definitions):
  \begin{align*}
    f(\rho_{s_{1}s},\dots,\rho_{s_{n}s})\subseteq \rho_{s} 
    &\iff 
      f(\rho^{h}_{h(s_{1}s)},\dots,\rho^{h}_{h(s_{n}s)})\subseteq
      \rho^{h}_{h(s)}\\ 
    &\iff 
      f(\rho^{h}_{h(s_{1})h(s)},\dots,\rho^{h}_{h(s_{n})h(s)})\subseteq
      \rho^{h}_{h(s)}\\ 
    &\iff 
      f^{h}(\rho^{h}_{t_{1}t},\dots,\rho^{h}_{t_{n}t})\subseteq
      \rho^{h}_{t}.
  \end{align*}
  According to \ref{D1}\eqref{D1i} we get the second part of
  \ref{H6-i} (note that if $s$ runs through all elements of $S$ then
  also $t=h(s)$ does so, because $h$ is bijective).

  \ref{H6-ii} and \ref{H6-iii} follow directly from \ref{H6-i}.
\end{proof}


\section{Concluding remarks}
\label{secI}

In Section~2 we defined the Galois connection $\SPol$--$\SInv$ for
$S$\hyp{}preclones and $S$\hyp{}relational clones, and
Theorems~\ref{Thm-I}, \ref{Thm-II} show that on a finite set,
$S$\hyp{}preclones correspond exactly to Galois closed sets of
$S$\hyp{}operations and $S$\hyp{}relational clones are captured
precisely by Galois closed sets of $S$\hyp{}relations. Since the
correspondence of clones with relational clones is a fundamental
result that has many applications, one can now ask whether similar
applications hold for $S$\hyp{}preclones and $S$\hyp{}relational
clones. For example:

\begin{problem}\label{Problem1}
  Classify the maximal $S$\hyp{}preclones for a finite monoid $S$ and
  finite set $A$ (analogously to Rosenberg's classification of maximal
  clones \cite{Ros70}).
\end{problem}

The motivating example for $S$\hyp{}preclones is based on the
2\hyp{}element group $S = \{\mathord{+}, \mathord{-}\}$ and the
$S$\hyp{}relation $(\mathord{\leq},\mathord{\geq})$ for a poset
$(A,\mathord{\leq})$ (cf.\ Examples~\ref{Ex0} and
\ref{Ex0a}). $S$\hyp{}preclones for this group are referred to as
$\pm$\hyp{}preclones, and the problem above may be more approachable
if it is first restricted to $\pm$\hyp{}preclones.

Likewise, for any result about clones, one might investigate whether a
similar result holds about $S$\hyp{}preclones.

Proposition~\ref{H0A} shows that for a finite set $A$ there are only
finitely many maximal and finitely many minimal $S$\hyp{}preclones. In
Part II we take a detailed look at the lattice of $\pm$\hyp{}preclones
over a 2\hyp{}element set and find a complete list of atoms and
coatoms of this lattice. In particular, it is shown that
$\SPol\{(\mathord{\leq},\mathord{\geq})\}$ (see Example~\ref{Ex0a}) is
a maximal $\pm$\hyp{}preclone.

Moreover, there appears the question if the notions of
$S$\hyp{}preclone and $S$\hyp{}relational clone can be extended to the
setting where the monoid $S$ of signa is only assumed to be a
semigroup.

Finally it is an interesting problem to develop the theory of
$S$\hyp{}algebras $(A,(f_{i})_{i\in I})$ with fundamental operations
$f_{i}\in \SOp(A)$ for a fixed finite monoid $S$.


\def\cprime{$'$}

\end{document}